 \def\sqr#1#2{{\,\vcenter{\vbox{\hrule height.#2pt\hbox{\vrule width.#2pt
height#1pt \kern#1pt\vrule width.#2pt}\hrule height.#2pt}}\,}}
\def\bo{\sqr44\,}
\def\q{\quad}
\def\titlerunning#1{\gdef\titrun{#1}}
\def\author#1{\gdef\autrun{\def\and{\unskip, }#1}\gdef\@author{#1}}
\def\address#1{{\def\and{\\\hspace*{18pt}}\renewcommand{\thefootnote}{}%
\footnote {#1}}%
\markboth{\titrun}{\titrun}}
\newtheorem{theorem}{Theorem}[section]
\newtheorem{corollary}[theorem]{Corollary}
\newtheorem{lemma}[theorem]{Lemma}
\newtheorem{proposition}[theorem]{Proposition}
\theoremstyle{definition}
\newtheorem{definition}[theorem]{Definition}
\newtheorem{remark}[theorem]{Remark}
\newtheorem{example}[theorem]{Example}
\newtheorem*{xrem}{Remark}
\newtheorem*{xthm}{Theorem}
\newtheorem*{xmainthm}{Main Theorem}
\newtheorem*{xdefn}{Definition}
\newtheorem*{xexam}{Example}
\numberwithin{equation}{section}
\begin{document}

\baselineskip=16pt


\titlerunning{C-H. Chu}

\title{ A Denjoy-Wolff theorem for bounded \\ symmetric domains}

\author{ Cho-Ho Chu \\
\it\small School of Mathematical Sciences, Queen Mary, University of London, London E1 4NS, UK}

\date{}

\maketitle

\address{{\it E-mail address:} c.chu@qmul.ac.uk}
\address{\footnotesize This research was partly supported by EPSRC, UK (grant no. EP/R044228/1).}
\address{To appear in J. Functional Analysis}

\vspace{-.3in}

\begin{abstract}
   Let $D$ be a  bounded symmetric domain of finite rank, realised as the open unit ball of  a complex Banach space,
which can be infinite dimensional.
Given a fixed-point free compact holomorphic map $f: D\longrightarrow D$, with iterates
$$f^n = \underbrace{f \circ \cdots \circ f}_{\mbox{$n$-times}},$$
such that the limit points of one orbit
$\{a, f(a), f^2(a), \ldots\}$
in an $f$-invariant horoball of unit hororadius lie in the extended Shilov boundary of $D$, we show that there is
a holomorphic boundary component $\Gamma$ of $D$, with closure $\overline \Gamma$,
such that $\ell(D) \subset \overline \Gamma$ for {\it all} subsequential limits $\ell = \lim_{k\rightarrow \infty} f^{n_k}$ of  $(f^n)$.

This generalises the Denjoy-Wolff theorem for a fixed-point
free holomorphic self-map $f$ on the disc $\mathbb{D}=\{z\in \mathbb{C}: |z|<1\}$.\\
\end{abstract}

\vspace{-.2in}

{\footnotesize {\it MSC:}  32M15, 32H50, 17C65, 46L70, 46G20}

{\footnotesize {\it Keywords:} Bounded symmetric domain. Holomorphic map. Iteration.  Denjoy-Wolff theorem.
Horoball. Horofunction. JB*-triple.}

 \section{Introduction}

The celebrated Denjoy-Wolff theorem \cite{d,w}  for the complex open unit disc
$$\mathbb{D}=\{z\in \mathbb{C}: |z|<1\}$$
 asserts that the iterates $$f^n=\underbrace{f \circ \cdots \circ f}_{\mbox{$n$-times}}$$ of a fixed-point free
holomorphic map $f: \mathbb{D}\longrightarrow \mathbb{D}$ always converge locally uniformly
to a constant function $f_0(\cdot)=\xi$ with $|\xi|=1$.

The unit disc $\mathbb{D}$ is
the unique one-dimensional bounded symmetric domain of  rank one (up to biholomorphism).
The Denjoy-Wolff  theorem has been extended to higher dimensional
rank-$1$ symmetric domains, namely, the complex Euclidean balls, by Herv\'e \cite{h}, but fails for higher rank bounded symmetric domains,
even for the rank-$2$  bidisc $\mathbb{D}^2= \mathbb{D}\times \mathbb{D}$ \cite{h1}.

There have been various  generalisations of the Denjoy-Wolff theorem to higher and infinite dimensional bounded
domains (e.g. \cite{ab,lie, chulevico,rs}).
 Notably, a definitive generalisation to the bidisc $\mathbb{D}^2$ has been achieved by Herv\'e \cite{h1},
which can be formulated by stating that the images $\ell(\mathbb{D}^2)$ of all subsequential limits $\ell = \lim_k f^{n_k}$,
in the topology of locally uniform convergence,
of the iterates $(f^n)$  of a fixed-point free holomorphic self-map $f$ on $\mathbb{D}^2$ are contained in the
closure $\overline \Gamma$ of one single holomorphic boundary component $\Gamma$ of $\mathbb{D}^2$.

The holomorphic boundary components of $\mathbb{D}^2$ are sets of the form
$$\{(\xi,\eta)\}, \quad \{\xi\} \times {\mathbb{D}}, \quad {\mathbb{D}}\times \{\eta\}\qquad(|\xi|=|\eta|=1)$$
which are faces of $\mathbb{D}^2$.
Hence three distinct possibilities for the iterates $(f^n)$ on $\mathbb{D}^2$ can occur, as shown in \cite{h1}.
The holomorphic boundary components of $\mathbb{D}$ are the singletons $\{\xi\}$ with $|\xi|=1$. In view of this,
Herv\'e's result  generalises completely the Denjoy-Wolff theorem to the bidisc.

Despite many forms of generlisations of the Denjoy-Wolff theorem
to various higher dimensional domains delivered by many authors, the question
of extending Herv\'e's version to all (finite dimensional) bounded symmetric domains has remained illusive
since Herv\'e's publication \cite{h1} in 1954.
 Herv\'e's approach is specific to the bidisc $\mathbb{D}^2$ and in the case of polydiscs $\mathbb{D}^d$ ($d>2)$,
 a version of the Denjoy-Wolff theorem
 has been obtained in \cite{ab}, where it is shown that the images of all subsequential limits $\lim_k f^{n_k}$ for a fixed-point free
 holomorphic self-map $f$ are contained in a closed set larger than a union $T$ of holomorphic boundary components
 of the polydisc.  A similar result has also been shown in \cite{cr} for a finite Cartesian product of open unit balls of Hilbert spaces.
Nevertheless, it was  remarked in \cite{ab}  that {\it ``it is impossible to determine a priori''} that
 the image of a subsequential limit is contained in a particular
  component of $T$, and for this, {\it ``it is necessary to know something more about the map $f$''} .

To address the last remark, we first observe that the behaviour of the subsequential limits $\lim_k f^{n_k}$ of
a fixed-point free compact holomorphic self-map $f$ on a domain
$D$ are inevitably affected by  the geometry of the topological boundary $\partial D$ of $D$
since $\partial D$ {\it attracts} all the orbit  limit points, that is to say, the limit points  of each and every
orbit
$$ \mathcal{O}(a)= \{a, f(a), f^2(a), \ldots\} \qquad ( a\in {D})$$
 lie in $\partial {D}$ (cf.\,Lemma \ref{kry}).

In the case of Euclidean (unit) balls $\mathbb{B}_d \subset \mathbb{C}^d$ $(d\in \mathbb{N})$,
 a special  geometric feature is that  these domains are {\it strictly convex},
that is, $\partial \mathbb{B}_d$
 {\it coincides} with its (Bergman) Shilov boundary $\Sigma(\mathbb{B}_d)$,
consisting of  extreme points of the closure $\overline{\mathbb{B}}_d$,
and consequently all orbit limit points accumulate in $\Sigma(\mathbb{B}_d)$.
This fact is crucial for the Denjoy-Wolff theorem for $\mathbb{B}_d$ (cf. proof of Theorem \ref{single}).
 The Euclidean balls (and the open unit balls of Hilbert spaces)
are the only bounded symmetric domains which are strictly convex and therefore,  for other domains $D$
with Shilov boundary $\Sigma(D)$, the orbit limit points may
just lie in $\partial {D}\backslash \Sigma(D)$. In other words, there is no guarantee that
$\Sigma(D)$ would contain any orbit limit point.
In view of this, one would seek
 a larger set in $\partial D$ to attract orbit limit points, playing the role of $\Sigma(\mathbb{B}_d)$ in Euclidean balls.

Indeed, there is  a natural candidate sandwiched between $\Sigma(D)$ and $\partial D$, namely, the
real analytic manifold $T(D)$ consisting of the limits
of geodesic rays in $D$ emanating from a base point. These limits  are called {\it  tripotents}, which will be explained
in more detail in the next section. In the case of rank-$1$ domains $D$, including the open unit balls of Hilbert spaces, we actually have
$$\Sigma(D) = T(D)=\partial D$$ but for other domains
the inclusions $ \Sigma(D) \subset T(D) \subset \partial D$ are proper. On the other hand, if $D$ is a polydisc,
$T(D)$ coincides with its submanifold $T_1(D)$ of {\it structural} tripotents (also defined in the next section). If $D$ is a Lie ball,
then we have $\Sigma(D)=T_1(D)\neq T(D)$. In view of these examples,
one is led to consider the  submanifold $\Sigma^*(D) = \Sigma(D) \cup T_1(D)$ of $T(D)$  so that
$\Sigma^*(D)=T(D)$ for both polydiscs and rank-$1$ domains $D$. We call $\Sigma^*(D)$ the
{\it extended Shilov boundary} of $ D$.
 Although $\Sigma^*(D)$ is a larger manifold than $\Sigma(D)$,
it is still inconclusive that $\Sigma^*(D)$  would contain any orbit limit point.
This appears to be the crux of the matter and explains the obstacle to extending fully the Denjoy-Wolff theorem
 to other domains. If we remove this obstacle (by letting $\Sigma^*(D)$ attract enough orbit limit points),
 then a full generalisation of the Denjoy-Wolff theorem
emerges.

 In this paper, we prove  in Theorem \ref{dw} the following extension of
 the Denjoy-Wolff theorem
 to all finite-rank bounded symmetric domains,
 which include all finite dimensional ones, but can also be infinite dimensional. \\

\begin{xmainthm}
{\it  Let $D$ be a bounded symmetric domain of finite rank, realised as the open unit ball of a complex Banach space, and $f: D
  \longrightarrow D$  a  fixed-point free compact holomorphic map. If the extended Shilov boundary $\Sigma^*(D)$
  contains the limit points of
one orbit $\mathcal{O}(a)$ in an $f$-invariant horoball of unit hororadius,   then
there is a holomorphic boundary component $\Gamma$ in the boundary of $ D$ such that
$\ell(D) \subset \overline \Gamma$ for all subsequential limits $\ell= \lim_k f^{n_k}$ of $(f^n)$,
in the topology of locally uniform convergence.

If $D$ is the bidisc or the open unit ball of a Hilbert space, the condition on $\Sigma^*(D)$ is superfluous.}
\end{xmainthm}

In the case of the open unit ball $D$ of a Hilbert  space (including $\mathbb{D}$), the assumption on $\Sigma^*({D})$ is vacuous since
 $\Sigma^*({D})$ is the boundary of $D$ as noted earlier. In the exceptional case of the bidisc $\mathbb{D}^2$,
we have already noted that Herv\'e has shown the above result without the condition on $\Sigma^*(\mathbb{D}^2)=T(\mathbb{D}^2)$.
However, Herv\'e's proof is protracted and only applicable to $\mathbb{D}^2$. For completeness, we provide
 in the Appendix a simplification of Herv\'e's proof, which exposes the special role of  $\mathbb{D}^2$.

The compactness assumption on $f$ in the theorem is only necessary if $D$ is contained in
an infinite dimensional Banach space $V$
since the result is false without this condition even in the case of a rank-$1$ infinite dimensional
domain, namely, the open unit ball of an infinite dimensional complex Hilbert space \cite{cm}.
We say that $f: D \longrightarrow D \subset V$ is {\it compact} if the closure $\overline{f(D)}$ is compact
in $V$.
If $V$ is finite dimensional, then a continuous map $f : D \longrightarrow D$ is always compact.

For {\it non-compact} holomorphic self-maps $f$ on an infinite dimensional rank-$1$ domain $D$, of which the
open unit balls of infinite dimensional Hilbert spaces are the only example,
we show in Theorem \ref{single} the two alternatives for all subsequential limits  $\ell= \lim_k f^{n_k}$  to conclude the paper.

By a {\it horoball} we mean a (nonempty) convex subdomain in $D$ of the form
$$\{x\in D: F(x) <s\} \qquad (s>0)$$
for some continuous function $F: D \longrightarrow (0,\infty)$, where the defining function $F$  is the exponential of a {\it horofunction}
on $D$ (\`a la Gromov \cite{grom}) and $s$ is called the {\it hororadius}. It is called {\it $f$-invariant} if
$$f(\{x\in D: F(x) <s\}) \subset \{x\in D: F(x) <s\}.$$
The $f$-invariance of all horoballs of hororadius $s> 0$ is equivalent to $F\circ f \leq F$
in which case, $F$ is called $f$-invariant.
It has been shown
in \cite[Theorem 4.4]{horo} that  $D$ contains an abundance of $f$-invariant horoballs
and is actually entirely covered by them, given any
 fixed-point free compact holomorphic self-map $f$ on $D$. Relevant details of horoballs
and horofunctions will be given in Section \ref{Horo}.

The main ingredients of our novel approach, as hinted above,  are the horoballs,  defined in \cite{horo} and (\ref{ball}),
 and their defining functions, which are related to Gromov's notion of horofunctions in \cite{ball,grom},
together with the Jordan algebraic structures of  symmetric domains, in particular,
the Bergman operator introduced in (\ref{b1}), which plays a prominent role
 and facilitates several computations involving the horoballs and horofunctions.

Given a  fixed-point free compact holomorphic map
$f: D \longrightarrow D$, a key in proving Theorem \ref{dw} is the existence of
$f$-invariant horoballs in $D$,  in other words, the existence of an $f$-invariant function $F$,
which  is exposed in Lemma \ref{g}. The function $F$
 is defined by a sequence in $D$ converging to the
boundary. A crucial element in our arguments involves the computation of $F$ with such a sequence.
  Theorem \ref{cr1} provides another key
 which reveals that the intersection of all  closed horoballs defined by $F$
is exactly the closure of a holomorphic boundary component of $D$. To accomplish all this, we rely on
a detailed analysis of the
Jordan algebraic structures of a symmetric domain to provide an explicit and tractable description of the horoballs and the
 associated horofunctions, in terms of the Bergman operators.

We begin by introducing Jordan structures in the next section and prove several relevant results
for later applications. We discuss holomorphic boundary components in the following section.
After this, we derive a number of important properties of  horoballs and their defining functions, which
 are used to prove the Denjoy-Wolff  theorem in the final section.
 Various novel results we prove for Jordan structures and horofunctions on bounded symmetric domains
 may be of  independent interest.

In what follows, all Banach spaces are over the field of complex numbers, unless stated otherwise,
and the complex vector space $H(D, W)$ of holomorphic maps
from a {\it domain}, that is, an open connected set, $D$ in a Banach space
to some Banach space $W$ is equipped with the topology of {\it  locally uniform
convergence} so that a sequence $(f_n)$ in  $H(D, W)$ converges to some $f \in H(D,W)$ in this topology
if and only if it converges to $f$
uniformly on any finite union of closed balls strictly contained in $D$ (cf.\,\cite[1.1]{book2}).

The topological boundary of a bounded domain $D$ in a Banach space will always be denoted by
$\partial D$.

\section{Jordan structures of bounded symmetric domains}

A bounded domain $D$ in a complex Banach space is called {\it symmetric} if at each point $p\in D$,
there is a (necessarily unique) symmetry $s_p : D \longrightarrow D$, which is a biholomorphic map
such that $s_p \circ s_p$ is the identity map and $p$ is the only fixed-point of $s_p$ in a neighbourhood of $p$.
Bounded symmetric domains are examples of symmetric Banach manifolds, which have been
discussed in detail in \cite{u} (see also \cite{book2}).
In finite dimensions,  they are exactly  the class of Hermitian symmetric spaces
of non-compact type via  Harish-Chandra's realisation \cite{har}, and have been classified by \'E. Cartan using Lie theory
\cite{cartan}.

The key to applying Jordan theory in our setting is the seminal result of Kaup \cite{k}, and its finite dimensional
precursor by Loos \cite{LOOS}, asserting that
bounded symmetric domains are exactly (via biholomorphisms) the open unit balls of the complex Banach spaces
equipped with a particular Jordan structure, called {\it JB*-triples}. This offers a Jordan algebraic description as well as
a unified treatment of bounded symmetric domains in both  finite and infinite dimensions. Nevertheless, this Jordan
structure is closely related to the Lie structure associated to a bounded symmetric domain $D$. Indeed, the JB*-triple
structure is derived from the Lie triple system $\frak p$ in the Cartan decomposition
\begin{equation}\label{rank}
\frak g = \frak k \oplus \frak p
\end{equation}
of the Lie algebra $\frak g $ of complete holomorphic vector fields on $D$, induced by the symmetry $s_p$ at a point
$p\in D$.

A {\it JB*-triple} is a complex Banach space $V$ equipped with
 a continuous triple product
$$\{\cdot,\cdot,\cdot\} : V\times V \times V \longrightarrow V,$$
called a {\it Jordan triple product}, which is symmetric and linear in the outer variables, but conjugate linear in the
middle variable, and satisfies the {\it triple identity}
$$\{a,b,\{x,y,z\}\} = \{\{a,b,x\},y,z\}- \{x, \{b,a,y\}, z\} + \{x,y,\{a,b,z\}\} \quad (a,b,x,y,z \in V)$$
such that the bounded linear map
$$a\bo a: x\in V\mapsto  \{a,a,x\} \in V \qquad (a, x\in V)$$
satisfies
\begin{enumerate}
\item[(i)] $a\bo a$ is hermitian, that is,
$\|\exp it(a\bo a)\|=1$ for all $t \in \mathbb{R}$;
\item[(ii)]
$a\bo a$ has a non-negative spectrum;
\item[(iii)] $\|a\bo a\|=\|a\|^2 .$
\end{enumerate}

From the triple identity, one can derive the  useful identity
\begin{equation}\label{jp1}
\{\{x,y,x\}, z, \{x,y,x\}\} = \{x,y,\{x,z,x\},y,x\} \qquad (x,y,z \in V)
\end{equation}
in a JB*-triple $V$  \cite[Lemma 1.2.4]{book}.

\begin{xrem}  We note from  Kaup's result that a complex Banach space is a JB*-triple if and only if its open unit ball
is a symmetric domain  (cf. \cite[Theorem 2.5.27]{book}) and further, a bounded symmetric domain $D$ is biholomorphic
to the open unit ball of a {\it unique} JB*-triple $V$, in the sense that if $D$ is biholomorphic to the open
unit ball of another JB*-triple $W$, then $V$ and $W$ have identical JB*-triple structure via an isomorphism
(cf.\,\cite[p.\,176]{book2}). Hence in the sequel, we can, and will,
always identify a bounded symmetric domain with the open unit ball of a JB*-triple
unambiguously, and define its {\it dimension}, $\dim D$, to be that of the underlying JB*-triple.
\end{xrem}

Examples of JB*-triples include Hilbert spaces and C*-algebras. The Jordan
triple product in a Hilbert space $V$ with inner product $\langle \cdot,\cdot\rangle$  is given by
$$\{a,b,c\} = \frac{1}{2}(\langle a,b \rangle c +\langle c, b\rangle a) \qquad (a,b,c \in V).$$
In particular, $(\mathbb{C}, |\cdot|)$ is a JB*-triple with the complex modulus $|\cdot|$
and the standard inner product. As usual,  the complex conjugate of a complex number $\alpha$ is denoted
throughout by $\overline \alpha$.

In a C*-algebra $\mathcal{A}$, the Jordan triple product is
$$\{a,b,c\} = \frac{1}{2}(ab^*c + cb^*a) \qquad (a,b,c \in \mathcal{A}).$$
More generally, the Banach space $L(H,K)$ of bounded linear operators between Hilbert spaces $H$ and $K$
 is a JB*-triple in
the triple product $\{a,b,c\} = \frac{1}{2}(ab^*c + cb^*a)$, where $b^*$ denotes the adjoint operator of $b$.
The symbol $L(K,K)$ is shortened to $L(K)$.

\begin{xdefn} The open unit ball of a complex Hilbert space will be called
a {\it Hilbert ball}.
\end{xdefn}

\begin{example}\label{ellinfty}
 Let $\{V_\alpha\}_{\alpha \in \Lambda}$ be a family of JB*-triples.  We define their $\ell_\infty$-sum
$V$ to be the direct sum
$$V=\bigoplus_{\alpha \in \Lambda} {}_{_\infty} V_\alpha = \{ (v_\alpha) \in \mbox{\Large$\boldsymbol\times$}_\alpha V_\alpha
: \sup_\alpha \|v_\alpha\| < \infty\},$$
equipped with the $\ell_\infty$-norm
$$\|(v_\alpha)\|_\infty : =\sup_\alpha \|v_\alpha\|.$$
Then $V$ is a JB*-triple in the coordinatewise triple product
$$\{(a_\alpha), (b_\alpha), (c_\alpha)\} = ( \{a_\alpha, b_\alpha, c_\alpha\}).$$
If $\Lambda$ is finite, the open unit ball $D$ of $V$ is the Cartesian product
$\mbox{\Large$\boldsymbol\times$}_\alpha D_\alpha$ of the open unit balls $D_\alpha$ of $V_\alpha$.
In particular, the polydisc $\mathbb{D}^d$ is the open unit ball of the JB*-triple
$\mathbb{C}\oplus_\infty \cdots \oplus_\infty \mathbb{C}$ of $d$-fold $\ell_\infty$-sum  of $\mathbb{C}$.
\end{example}

Let  $V$ be a JB*-triple with open unit ball $D$.
Given $a,b\in V$, we define the {\it box operator}
$a\bo b: V \longrightarrow V$ by
$$a\bo b (x) = \{a,b,x\} \qquad (x\in V)$$
where $\|a\bo b\| \leq \|a\|\|b\|$. We note that $\|(a\bo a)(a)\| = \|a\|^3$.

  The {\it Bergman operator} $B(b,c): V\longrightarrow V$, where $b,c \in V$, plays a crucial role
in our investigation. It is a linear map defined by
\begin{equation}\label{b1}
B(b,c)(x) = x - 2(b\bo c)(x)+\{b,\{c,x,c\},b\} \qquad (x\in V).
\end{equation}
We note that
\begin{equation}\label{bbcx}
\|B(b,c)(x)\| \leq \|x\| + 2\|b\|\|c\|\|x\| + \|b\|^2\|c\|^2\|x\| = (1+\|b\|^2\|c\|^2)\|x\|.
\end{equation}

For each $a\in D$, the square root $B(a,a)^{1/2}$ exists and there is an induced
 biholomorphic map $g_a : D \longrightarrow D$,
 called a {\it transvection} (or {\it M\"obius transformation}),
 given by
\begin{equation}\label{g2}
g_a(x) = a + B(a,a)^{1/2}(I+x\bo a)^{-1}(x) \qquad (x\in D)
\end{equation}
where $I$ denotes the identity operator on $V$ and the inverse
$(I+x\bo a)^{-1}: V \longrightarrow V$ exists as $\|x\bo a\|\leq \|x\|\|a\| <1$.
In particular, $g_0$ is the identity map on $D$ and is the only transvection
having a fixed point in $D$. We note that
$g_a(0)=a$ and $g_a^{-1} = g_{-a}$ \cite[Lemma 3.2.25]{book2}.
In the Cartan decomposition associated to $D$ in (\ref{rank}), $\frak g$ is the Lie
algebra of the automorphism group of $D$ and
$\exp (\frak p)$ is the set of transvections.

On the unit disc $\mathbb{D}$, we have
\begin{equation}\label{bz}
B(a,a)z=( |1-|a|^2)^2 z \quad {\rm and} \quad B(a,a)^{1/2}z=( |1-|a|^2) z \quad (z\in \mathbb{C}).
\end{equation}

 The Kobayashi distance $\kappa$ on $D$ cam be computed using transvections:
\begin{equation}\label{alphapho}
\kappa(z,  w)= \tanh^{-1}\|g_{-z}(w)\| = \kappa(w,z) \qquad ( z, w\in D).
\end{equation} The distance $\kappa$ is contracted by holomorphic maps and
$\|w\|= \tanh \kappa(0,w) $ for $w\in D$ \cite[Theorem 3.5.9]{book2}.

The quadratic operators $Q_a : V \longrightarrow V$, where $a\in V$, also play an important role in
JB*-triples. These operators are defined by
$$Q_a(x) = \{a,x,a\} \qquad (x\in V).$$

We note that $B(a,b)$ is invertible for $\|a\|\|b\|<1$ (cf.\,\cite[(7.3) vii]{LOOS}).
 The proof of the following two identities
 can be found in \cite[Proposition 3.2.13. Lemma 3.2.17]{book}.
\begin{equation}\label{bid2}
\|B(z,z)^{-1/2}\| = \frac{1}{1-\|z\|^2} \qquad (\|z\|<1).
\end{equation}
\begin{equation}\label{bid}
1- \|g_{-y}(z)\|^2 = \frac{1}{\|B(z,z)^{-1/2}B(z, y)B(y, y)^{-1/2}\|}
\qquad (y, z\in D).                                                                                                                                                                      
\end{equation}

For a Hilbert ball $D$, this formula simplifies to
\begin{equation}\label{hh}
1-\|g_{-b}(a)\|^2 = \frac{(1-\|a\|^2)(1-\|b\|^2)}{|1-\langle a,b\rangle|^2} \qquad (a,b\in D)
\end{equation}
(cf.\,\cite[Example 3.2.29]{book2})
where $\langle\cdot,\cdot\rangle$ denotes the inner product of the underlying Hilbert space $V$.

 \begin{lemma}\label{hhh} Let $D$ be  the open unit ball of a JB*-triple $V$.
Given a sequence $(y_m)$ in $D$  norm converging to a boundary point $\xi \in \partial D$, we have
\begin{equation}\label{z}
\lim_{m\rightarrow \infty}\|g_{-y_m}(z)\| = \lim_{m\rightarrow \infty} \|g_{-z}(y_m)\|=1 \qquad (z\in D).
\end{equation}
\end{lemma}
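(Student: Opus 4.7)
My plan is to avoid direct manipulation of the Bergman-operator identity (\ref{bid}) and instead exploit the symmetric presentation (\ref{alphapho}) of the Kobayashi distance, which collapses the problem to a statement about $\kappa$ alone.

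First I would observe that (\ref{alphapho}) immediately gives
$$\|g_{-y_m}(z)\| = \tanh\kappa(z,y_m) = \tanh\kappa(y_m,z) = \|g_{-z}(y_m)\|,$$
so the two limits in (\ref{z}) coincide, and it suffices to establish that $\|g_{-z}(y_m)\|\to 1$, equivalently $\kappa(z,y_m)\to\infty$, for every fixed $z\in D$.

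Next, since $\xi\in\partial D$ is a boundary point of the open unit ball of $V$, we have $\|\xi\|=1$, and so $\|y_m\|\to 1$. The identity $\|w\|=\tanh\kappa(0,w)$ quoted just after (\ref{alphapho}) then forces $\tanh\kappa(0,y_m)\to 1$, hence $\kappa(0,y_m)\to\infty$. The triangle inequality for $\kappa$ yields
$$\kappa(z,y_m)\geq \kappa(0,y_m)-\kappa(0,z)\longrightarrow \infty,$$
and applying (\ref{alphapho}) once more converts this back to $\|g_{-z}(y_m)\|=\tanh\kappa(z,y_m)\to 1$, completing the argument.

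I do not expect any serious obstacle. The only point requiring a moment's thought is that the triangle inequality is available, i.e.\ that $\kappa$ is a genuine metric on the bounded domain $D$; this is standard. A more Jordan-algebraic route via (\ref{bid}) and (\ref{bid2}) would require showing that
$$\|B(z,z)^{-1/2}B(z,y_m)B(y_m,y_m)^{-1/2}\|\longrightarrow\infty,$$
but the possible degeneration of the middle factor in the limit $y_m\to\xi$ makes such a direct estimate awkward to control; the Kobayashi-distance argument circumvents this difficulty cleanly.
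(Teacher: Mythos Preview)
Your argument is correct and complete: the symmetry $\|g_{-y_m}(z)\|=\|g_{-z}(y_m)\|$ follows from (\ref{alphapho}), and the divergence $\kappa(z,y_m)\to\infty$ via the triangle inequality gives the conclusion cleanly.

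It is, however, a genuinely different route from the paper's. The paper proceeds precisely along the Jordan-algebraic path you describe as ``awkward'': starting from (\ref{bid}) and using the operator-norm inequality $\frac{1}{\|AB\|}\leq\frac{\|A^{-1}\|}{\|B\|}$ with $A=B(z,z)^{-1/2}B(z,y_m)$ and $B=B(y_m,y_m)^{-1/2}$, together with (\ref{bid2}), it obtains
\[
1-\|g_{-y_m}(z)\|^2 \leq \|B(z,y_m)^{-1}B(z,z)^{1/2}\|\,(1-\|y_m\|^2)\longrightarrow 0,
\]
noting that $\|B(z,y_m)^{-1}B(z,z)^{1/2}\|\to\|B(z,\xi)^{-1}B(z,z)^{1/2}\|$ since $\|z\|\|\xi\|<1$ keeps $B(z,\xi)$ invertible. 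So the ``degeneration of the middle factor'' you worried about does not arise: one inverts $B(z,y_m)$, not $B(y_m,y_m)$ or $B(\xi,\xi)$. What the paper's estimate buys is a quantitative bound $1-\|g_{-y_m}(z)\|^2\leq C(z)(1-\|y_m\|^2)$ with an explicit constant depending on $z$ and $\xi$; your metric-space argument is shorter and more conceptual but does not yield such a rate.
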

\begin{proof}
By (\ref{bid2}) and (\ref{bid}), we have
\begin{eqnarray*}
0 &<& 1- \|g_{-y_m}(z)\|^2 = \frac{1}{\|B(z,z)^{-1/2}B(z, y_m)B(y_m, y_m)^{-1/2}\|}\\
&\leq& \frac{\|B(z, y_m)^{-1}B(z, z)^{1/2}\|}{\|B(y_m, y_m)^{-1/2}\|}
= \|B(z, y_m)^{-1}B(z, z)^{1/2}\|(1-\|y_m\|^2) \longrightarrow 0 \qquad (z\in D)
\end{eqnarray*}
as $m \rightarrow \infty$, where $\lim_m \|B(z, y_m)^{-1}B(z, z)^{1/2}\|=
\|B(z,\xi)^{-1}B(z, z)^{1/2}\|$.
\end{proof}

An element $e$ in a JB*-triple $V$ is called a {\it tripotent} if $\{e,e,e\}=e$.
Tripotents in C*-algebras are exactly the partial isometries.
Since $\|e\|= \|e\|^3$, we have $\|e\|=1$ if (and only if) $e \neq 0$.
The nonzero tripotents in $V$ are exactly the limit points $\lim_{t\rightarrow \infty} \exp_0(tv)$
$(v\in V)$
of the geodesic rays in its open unit ball $D$ emanating from $0\in D$ (cf.\,\cite[4.8]{LOOS})
and they form a real analytic manifold in the norm topology (cf.\,\cite[4.4]{sau}), denoted by $T(D)$.

A tripotent $e$ in a JB*-triple $V$ induces an eigenspace decomposition of $V$, called the
{\it Peirce decomposition}. The eigenvalues of the box operator $e\bo e: V \to V$ are in
the set $\{0, 1/2,1\}$. Let
$$V_k(e) = \left\{x\in V: (e\bo e)(x)=\frac{k}{2} x\right\} \qquad (k=0,1,2)$$
be the eigenspaces, called the {\it Peirce $k$-space} of $e$. Then we have the algebraic direct sum
$$V= V_0(e) \oplus V_1(e) \oplus V_2(e)$$
(which is usually not an $\ell_\infty$-sum). The Peirce $k$-space $V_k(e)$ is the range of
the {\it Peirce $k$-projection} $P_k(e) : V \longrightarrow V$, given by
$$P_2(e) = Q_e^2, \quad P_1(e) = 2(e\bo e -Q_e^2), \quad P_0(e) = B(e,e)$$
which are contractive projections \cite[Proposition 3.2.31]{book2}.
Each Peirce $k$-space $V_k(e)$ is a JB*-triple in the inherited norm and triple product from $V$.
We note from (\ref{jp1}) that
\begin{equation}\label{jp2}
\{e, V_j(e), e\} = \{e, P_2(e)(V_j(e)),e\} = \{0\} \qquad (j=0,1).
\end{equation}
Also, $P_2(e)$ is  a {\it structural} projection
in the sense of Loos (cf. \cite[p. 206]{em}). Hence $P_1(e)=0$ if and only if $e\bo e : V \longrightarrow V$ is
a structural projection.

Nonzero tripotents in a JB*-triple $V$ can be classified according to their Perice $k$-spaces.

\begin{xdefn}
A nonzero tripotent $e$ in a JB*-triple $V$  is called {\it maximal} if $V_0(e)=\{0\}$.
It is called {\it structural} if $V_1(e)=\{0\}$. We call $e$ a {\it minimal tripotent} if $V_2(e)= \mathbb{C}e$,
which is equivalent to $\{e, V,e\}= \mathbb{C}e$.  The maximal tripotents in $V$, if they exist,
are exactly the extreme points of the closure $\overline D$ of its open unit ball $D$ \cite[Theorem 3.2.3]{book}.
We define $$T_0(D)=\{e\in T(D): V_0(e)=\{0\}\} \quad {\rm and} \quad T_1(D)=\{e\in T(D): V_1(e)= \{0\}\}.$$
\end{xdefn}
We note that $T_0(D) \cap T_1(D) = \{e\in T(D): V_2(e)=V\}$ is the set of {\it unitary} tripotents.
The following lemma shows that $T_0(D)$, $T_1(D)$ and $T_0(D) \cup T_1(D)$  are submanifolds of $T(D)$.

\begin{lemma} The sets $T_0(D)$ and $T_1(D)$ are open in $T(D)$.
\end{lemma}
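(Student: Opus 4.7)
The plan is to exploit the fact that each Peirce projection $P_k(e)$ is contractive, hence either zero or of norm exactly $1$, combined with the fact that $e \mapsto P_k(e)$ depends continuously on $e$. Writing $T_0(D) = \{e \in T(D) : P_0(e) = 0\}$ and $T_1(D) = \{e \in T(D) : P_1(e) = 0\}$, openness will follow from a short contradiction argument.

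First, I would verify that the map $e \in T(D) \mapsto P_k(e) \in L(V)$ is norm-continuous for each $k \in \{0,1,2\}$. This is immediate from the explicit formulas
\begin{equation*}
P_2(e) = Q_e^2, \qquad P_1(e) = 2(e\bo e - Q_e^2), \qquad P_0(e) = B(e,e),
\end{equation*}
since $Q_e$, $e\bo e$ and $B(e,e)$ are each continuous in $e$ by continuity of the triple product. In fact these are real-analytic in $e$, but continuity is all that is needed.

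Second, I would note that any nonzero contractive projection $P$ on a Banach space must satisfy $\|P\| = 1$: picking $x$ with $Px \neq 0$ and using $P(Px) = Px$ gives $\|Px\| \leq \|P\|\,\|Px\| \leq \|Px\|$, forcing $\|P\| \geq 1$, which combined with $\|P\| \leq 1$ yields equality. Since the paper has already recorded that each $P_k(e)$ is a contractive projection, it follows that $P_k(e)$ is either zero or has norm exactly $1$.

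Finally, I would combine these two facts. Suppose $e \in T_0(D)$, so $P_0(e) = 0$, and assume for contradiction that there is a sequence $(e_n) \subset T(D) \setminus T_0(D)$ with $e_n \to e$ in the norm topology. Then $P_0(e_n) \neq 0$ for every $n$, hence $\|P_0(e_n)\| = 1$ by the second step; but continuity of $e \mapsto P_0(e)$ gives $\|P_0(e_n)\| \to \|P_0(e)\| = 0$, a contradiction. Thus $T_0(D)$ is open in $T(D)$, and replacing $P_0$ by $P_1$ throughout shows that $T_1(D)$ is open. There is no real obstacle here — the only slightly delicate point is the continuous dependence of the Peirce projections on $e$, but this is a direct consequence of the continuity of the triple product.
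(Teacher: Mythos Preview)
Your proof is correct and follows essentially the same route as the paper: both argue via continuity of $e\mapsto P_k(e)$ (from the triple-product formulas) together with the fact that a nonzero contractive projection has norm exactly $1$, obtaining a contradiction from a sequence $(e_n)$ in $T(D)\setminus T_k(D)$ converging to a point $e$ with $P_k(e)=0$. The only cosmetic difference is that the paper frames this as showing $T(D)\setminus T_k(D)$ is closed, whereas you frame it as showing each $e\in T_k(D)$ is interior; in the metric space $T(D)$ these are equivalent.
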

\begin{proof}
For $k=0,1$, we show that $T(D) \backslash T_k(D)$ is closed in $T(D)$.
Let $(e_n)$ be a sequence in $T(D) \backslash T_k(D)$ converging to $e\in T(D)$.
Then we have $P_k(e_n) \neq 0$ for all $n$. If $V_k(e)=\{0\}$, then continuity of the triple product implies
$$0= \|P_k(e)\| = \lim_{n\rightarrow \infty} \|P_k(e_n)\|$$
which gives the contradiction that $1=\|P_k(e_n)\| <1/2$ from some $n$ onwards.
Hence $V_k(e) \neq \{0\}$ and $e \in T(D) \backslash T_k(D)$.
\end{proof}

Given $a,b\in V$, we say that $a$ and $b$ are {\it orthogonal} to each other if  $a \bo b =0$,
in which case we have $b\bo a =0$ and
\begin{equation}\label{osum}
\|a+ b\|= \max\{\|a\|, \|b\|\}
\end{equation}
from \cite[Corollary 3.1.21]{book}. Orthogonality of two tripotents $e,c\in V$ is equivalent to
$c\in V_0(e)$, that is, $\{e,e,c\}=0$ \cite[Corollary 1.2.46]{book}.
We also  have
$$V_2(e) \bo V_0(e) := \{ a\bo b: a\in V_2(e), b\in V_0(e)\} = \{0\}$$
\cite[Theorem 1.2.44]{book}. Given $a= e_1 + e_2$ where $e_1$
and $e_2$ are orthogonal tripotents, and given $b\bo a=0$, then we also have $b\bo e_1 = b \bo e_2 =0$ since
$$\{e_1,e_1,b\} =\{e_1, a,b\}=0 =\{e_2,e_2,b\}$$
and hence $b \in V_0(e_1) \cap V_0(e_2)$ as well as $e_1 \bo b= e_2 \bo b =0$.

A minimal tripotent $c$ can never be a sum of two mutually orthogonal nonzero tripotents $u$ and $v$.
Indeed, if $c= u+v$ and $u\bo v=0$, then $\alpha c = \{c,u,c\} = \{u+v, u, u+v\} = u$ for some
$\alpha \in \mathbb{C}$ and similarly, $v= \beta c$ for some $\beta \in \mathbb{C}$, which implies
$\alpha \overline \beta c \bo c = u\bo v=0$ and $\alpha \overline \beta=0$, resulting in $u=0$ or $v=0$.

\begin{lemma}\label{24} Let $e$  be a minimal tripotent in a JB-triple $V$ such that $V_1(e)=\{0\}$.
Then for each minimal tripotent $c\in V$, we have either $c=\lambda e$ for some $|\lambda|=1$ or
$c \bo e=0$.
\end{lemma}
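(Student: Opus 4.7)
The plan is to exploit the Peirce decomposition of $V$ with respect to $e$. Since $V_1(e)=\{0\}$ by hypothesis and $V_2(e)=\mathbb{C}e$ by minimality, the decomposition collapses to $V = \mathbb{C}e \oplus V_0(e)$, and I would write the second minimal tripotent as $c = \alpha e + c_0$ with $\alpha \in \mathbb{C}$ and $c_0 \in V_0(e)$. The stated identity $V_2(e)\bo V_0(e)=\{0\}$ immediately gives $e\bo c_0=0$, and (as recorded in the excerpt just after the definition of orthogonality) equivalently $c_0 \bo e = 0$; in particular $\{c_0,e,c_0\}=0$, while $\{e,e,c_0\}=0$ holds directly from $c_0 \in V_0(e)$.

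With these vanishings in hand, I would evaluate the quadratic operator $Q_c$ at $e$ in two different ways. Expanding trilinearly,
\[
Q_c(e)=\{\alpha e+c_0,\,e,\,\alpha e+c_0\}=\alpha^2\{e,e,e\}+\alpha\{e,e,c_0\}+\alpha\{c_0,e,e\}+\{c_0,e,c_0\}=\alpha^2 e,
\]
using $\{c_0,e,e\}=\{e,e,c_0\}=0$ by symmetry of the outer variables together with the remarks above. On the other hand, since $c$ is itself a minimal tripotent, the Peirce multiplication rules give $Q_c(V)=\{c,V,c\}\subseteq V_2(c)=\mathbb{C}c$, so $\alpha^2 e \in \mathbb{C}c$.

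Now the dichotomy is immediate. If $\alpha = 0$, then $c = c_0 \in V_0(e)$, and the relation $c_0 \bo e = 0$ already observed is precisely the conclusion $c \bo e = 0$. If $\alpha \neq 0$, then $e$ is a nonzero scalar multiple of $c$, so $c = \lambda e$ for some $\lambda \in \mathbb{C}$ with $\lambda \neq 0$; the tripotent identity then gives $c=\{c,c,c\}=|\lambda|^2\lambda\, e = |\lambda|^2 c$, forcing $|\lambda|=1$. I do not foresee any real obstacle: the only point that calls for care is the bookkeeping in the expansion of $Q_c(e)$, which is handled transparently by the orthogonality identity $V_2(e) \bo V_0(e) = \{0\}$ quoted from the excerpt.
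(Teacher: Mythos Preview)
Your proof is correct, and it takes a somewhat different route from the paper's. Both arguments start from the same Peirce decomposition $c=\alpha e+c_0$ with $c_0\in V_0(e)$ and use $e\bo c_0=c_0\bo e=0$. The paper then expands $\{c,c,c\}$ to see that $c_0$ is itself a tripotent and that $|\alpha|=1$ whenever $\alpha\neq 0$, and concludes via the separately established fact that a minimal tripotent cannot be written as a sum of two mutually orthogonal nonzero tripotents. You instead compute $Q_c(e)=\{c,e,c\}=\alpha^2 e$ and invoke the characterisation $\{c,V,c\}=\mathbb{C}c$ of minimality to force $e\in\mathbb{C}c$ when $\alpha\neq 0$. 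Your approach is arguably slightly more self-contained, since it avoids the auxiliary indecomposability remark; the paper's approach, on the other hand, extracts along the way the extra information that $c_0$ is a tripotent, which is not needed here but fits naturally with the surrounding development.
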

\begin{proof} By assumption, $V$ has a Peirce decomposition $V= \mathbb{C}e + V_0(e)$.
Hence a minimal tripotent $c$ has a decomposition $c= \lambda e + u$ for some $\lambda \in \mathbb{C}$ and
$u\in V_0(e)$.
By orthogonality, we have
$$\lambda e + u = c = \{c,c,c\} = |\lambda|^2\lambda e + \{u,u,u\}$$
which implies $u=\{u,u,u\}$ and $|\lambda|=1$ if $\lambda \neq 0$, in which case $\lambda e$ is a
nonzero tripotent.
By the preceding remark, we have either $c=\lambda e$ or $c=u \in V_0(e)$.
\end{proof}

\begin{lemma}\label{c(cee)c}In a JB*-triple $V$, let $c=  c_1+c_2$ be an orthogonal sum of tripotents.
 Let $e\in V$ with $P_2(c_1)(e)= \lambda c_1$ for some $\lambda \in \mathbb{C}$. Write
$e^1 = P_1(c_1)(e)$. Then we have
\begin{equation}\label{cee1}
P_2(c_1)\{c,e,c\} = \overline\lambda  c_1
\end{equation}
and
\begin{equation*}
P_2(c_1)( \{c, e,e\})= |\lambda|^2  c_1 +  \{c_1,\{c_1, e^1,e^1\}, c_1\}.
\end{equation*}
\end{lemma}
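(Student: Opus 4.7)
The proof is essentially Peirce-arithmetic bookkeeping with respect to the tripotent $c_1$. I would first decompose $e = \lambda c_1 + e^1 + e^0$ where $e^0 := P_0(c_1)(e) \in V_0(c_1)$, the $V_2(c_1)$-component being $\lambda c_1$ by hypothesis. Since $c_1$ and $c_2$ are orthogonal tripotents we have $c_2 \in V_0(c_1)$ (as the text recalls), so $c$ is already split into its Peirce pieces for $c_1$. The whole calculation rests on two facts recorded in the text: the Peirce arithmetic $\{V_i(c_1), V_j(c_1), V_k(c_1)\} \subset V_{i-j+k}(c_1)$ (with $V_m = \{0\}$ when $m \notin \{0,1,2\}$), and the annihilation rule $\{V_2(c_1), V_0(c_1), V\} = \{V_0(c_1), V_2(c_1), V\} = \{0\}$.

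For (\ref{cee1}), I expand $\{c,e,c\} = \{c_1,e,c_1\} + 2\{c_1,e,c_2\} + \{c_2,e,c_2\}$ and substitute $e = \lambda c_1 + e^1 + e^0$ in each summand. Peirce arithmetic places every term of $\{c_1,e,c_1\}$ outside $V_2(c_1)$ except $\{c_1,\lambda c_1,c_1\} = \overline{\lambda}\,c_1$; the annihilation rule forces $\{c_2,e,c_2\}$ into $V_0(c_1)$ and $\{c_1,e,c_2\}$ into $V_0(c_1) \oplus V_1(c_1)$. Projecting onto $V_2(c_1)$ leaves $\overline{\lambda}\,c_1$, giving (\ref{cee1}).

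For the second identity I split $\{c,e,e\} = \{c_1,e,e\} + \{c_2,e,e\}$ and apply the same sieve. Every Peirce piece of $\{c_2,e,e\}$ falls into $V_0(c_1) \oplus V_1(c_1)$: reaching $V_2(c_1)$ from $\{V_0,\cdot,\cdot\}$ would require a middle factor in $V_2(c_1)$, which is ruled out by the annihilation rule. In $\{c_1,e,e\}$ exactly two $V_2(c_1)$-survivors remain: the diagonal $\{c_1, \lambda c_1, \lambda c_1\} = |\lambda|^2 c_1$ and $\{c_1, e^1, e^1\}$, which lies in $V_{2-1+1} = V_2(c_1)$.

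The main obstacle is that the lemma's right-hand side is written as $\{c_1, \{c_1, e^1, e^1\}, c_1\}$, not as the $\{c_1, e^1, e^1\}$ produced by the direct computation. To reconcile the two forms I would verify the auxiliary identity $\{c_1, \{c_1, e^1, e^1\}, c_1\} = \{c_1, e^1, e^1\}$ by applying the triple identity with $a = e^1,\, b = c_1,\, x = c_1,\, y = e^1,\, z = c_1$: the left-hand side $\{e^1, c_1, \{c_1, e^1, c_1\}\}$ vanishes because $\{c_1, e^1, c_1\} \in V_3 = \{0\}$, while the three terms on the right collapse, using $\{c_1, c_1, e^1\} = \tfrac{1}{2} e^1$ and the outer symmetry of the triple product, to $\{c_1, e^1, e^1\} - \{c_1, \{c_1, e^1, e^1\}, c_1\}$, yielding the claimed equality. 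The rest is routine Peirce bookkeeping.
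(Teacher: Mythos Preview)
Your approach is correct and genuinely different from the paper's. The paper does not expand trilinearly and track Peirce indices; instead it applies $Q_{c_1}$ first, computing $\{c_1,\{c,e,c\},c_1\}$ and $\{c_1,\{c,e,e\},c_1\}$ via the triple identity (using $c\bo c_1 = c_1\bo c_1$ and $\{e,c_1,c_1\}=\lambda c_1 + \tfrac{1}{2}e^1$), and then applies $Q_{c_1}$ once more, since $P_2(c_1)=Q_{c_1}^2$. This is why the answer comes out as $\{c_1,\{c_1,e^1,e^1\},c_1\}$ rather than $\{c_1,e^1,e^1\}$: the outer $Q_{c_1}$ is the second application, and the paper never needs your auxiliary identity. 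Your route is more pedestrian but arguably more transparent; the paper's route explains the shape of the stated formula and avoids the closing reconciliation step.

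One small slip to fix: in your treatment of $\{c_2,e,e\}$ you say that landing in $V_2(c_1)$ ``would require a middle factor in $V_2(c_1)$''. The Peirce arithmetic $\{V_0,V_j,V_k\}\subset V_{k-j}$ actually forces $j=0$, $k=2$, so the dangerous term is $\{c_2,e^0,\lambda c_1\}$ with middle factor in $V_0$ and \emph{outer} factor in $V_2$. This term does vanish, but by outer symmetry $\{c_2,e^0,c_1\}=\{c_1,e^0,c_2\}$ together with the annihilation $c_1\bo e^0=0$; your stated reason does not cover it. With that correction the argument goes through, and your auxiliary identity $\{c_1,\{c_1,e^1,e^1\},c_1\}=\{c_1,e^1,e^1\}$ is derived correctly from the triple identity.
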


\begin{proof}
We note that $c\bo c_1 =  c_1\bo c_1$, $P_0(c_1)(e) \bo c_1 =0$  and
$\{e,c_1,c_1\}=\{ P_2(c_1)(e) + P_1(c_1)(e),\, c_1,c_1\} = \lambda c_1 + \frac{e^1}{2}$.
 By the triple identity, we have
\begin{eqnarray*}
\{c_1, \{c,e,c\}, c_1\}&=& 2\{\{e,c,c_1\}, c,c_1\} - \{e,c, \{c_1,c, c_1\}\}\\
&=& 2 \{\{e,c_1,c_1\}, c_1,c_1\} -  \{e,c_1,c_1\}\\
&=& 2\{ \lambda c_1 + \frac{e^1}{2},\,  c_1,c_1\} -  \lambda c_1 - \frac{ e^1}{2}\\
&=&  2 \lambda c_1 + \frac{ e^1}{2}-  \lambda c_1 - \frac{ e^1}{2} = \lambda c_1
\end{eqnarray*}
which proves $P_2(c_1)\{c,e,c\} = \overline\lambda  c_1$.
Next, the triple identity gives
\begin{eqnarray*}
\{c_1, \{c,e,e\}, c_1\}&=& 2\{\{e,c,c_1\}, e,c_1\} - \{e,c, \{c_1,e, c_1\}\}\\
&=&  2 \{\{e,c_1,c_1\},\, e, \,c_1\} -  \overline \lambda \{e, c_1, c_1\}\\
&=& 2 |\lambda|^2c_1 + \frac{\overline\lambda e^1}{2} + \{e^1,e^1,c_1\}
 -  |\lambda|^2 c_1 -\frac{ \overline\lambda e^1}{2}\\
&=& |\lambda|^2c_1 +  \{e^1,e^1,c_1\}
\end{eqnarray*}
where
$$\{e^1,e,c_1\}= \{e^1,\, \lambda c_1 + e^1,\, c_1\}= \frac{\overline\lambda e^1}{2} + \{e^1,e^1,c_1\}.$$
It follows  that
\begin{eqnarray*}
P_2(c_1)( \{c, e,e\}) &=& \{c_1, (  |\lambda|^2c_1 +  \{e^1,e^1,c_1\} ),\, c_1\}\\
&=&  |\lambda|^2  c_1 +  \{c_1,\{c_1, e^1,e^1\}, c_1\}
\end{eqnarray*}
which proves the second identity.

\end{proof}

 Let $\{e_{1},\dots,e_{n}\}$ a family  of mutually orthogonal tripotents in a JB*-triple $V$. For $i,j \in \{0,1, \ldots,n\}$,
 the {\it joint} Peirce space $V_{ij}$ is defined by
\begin{eqnarray*}\label{ij}
V_{ij}:=  V_{ij}(e_{1},\dots,e_{n})  =\{z\in V\medspace:\medspace2\{e_{k},e_{k},z\}=(\delta_{ik}+\delta_{jk})z\medspace\text{for }k=1,\dots,n\}
\end{eqnarray*}
where $\delta_{ik}, \delta_{jk}$ are the Kronecker delta and $V_{ij}=V_{ji}$. If $1\leq i < j\leq n$, then we have
\begin{equation}\label{vij}
V_{ij} = V_1(e_i) \cap V_1(e_j).
\end{equation}
We also have
\begin{equation}\label{io}
V_{0i}=V_1(e_i) \cap \bigcap_{j\neq i} V_0(e_j) \qquad (i=1, \ldots, n).
\end{equation}

The decomposition
\begin{eqnarray*}
V & = & \sum_{0\leq i\leq j\leq n}V_{ij}
\end{eqnarray*}
is called a {\it joint} Peirce decomposition, which is an algebraic direct sum.

The Peirce multiplication rules
\begin{eqnarray}\label{bo}
\{V_{ij},V_{jk},V_{km}\}  \subset  V_{im}\q {\rm and} \q V_{ij} \bo V_{pq} =\{0\} \q {\rm for} \q
\{i,j\}\cap \{p,q\}=\emptyset
\end{eqnarray}
hold. For a singleton $\{e_1\}$, we have
$$\{V_i(e_1), V_j(e_1), V_k(e_1)\} \subset V_{i-j+k}(e_1), \quad {\rm where}\quad V_p(e_1)=\{0\} \quad
{\rm for}\quad p\notin \{0,1,2\}.$$

  The contractive projection $P_{ij}(e_{1},\dots,e_{n})$ from $V$ onto $V_{ij}(e_{1},\dots,e_{n})$
is called a {\it joint Peirce projection} which satisfies
\begin{equation}
 P_{ij}(e_{1},\dots,e_{n})(e_{k})= \left\{\begin{array}{ll} 0& (i \neq j)\\
                                                   \delta_{ik} e_k & (i=j).\label{pijek}
                                         \end{array}\right.
\end{equation}
We shall simplify the notation $P_{ij}(e_1,\ldots,e_n)$ to $P_{ij}$ if the tripotents $e_1, \ldots, e_n$
are understood. Note that $P_{ij}=P_{ji}$.

Let $M=\{0,1,\dots,n\}$ and $N\subset\{1,\dots,n\}$. The Peirce $k$-spaces of the tripotent $e_{N}=\sum_{i\in N}e_{i}$
 are given by
\begin{eqnarray}
V_{2}(e_{N}) & = & \sum_{i,j\in N}V_{ij},\label{Peirce-2 of sum}\\
V_{1}(e_{N}) & = & \sum_{\substack{i\in N\\
j\in M\backslash N
}
}V_{ij},\\
V_{0}(e_{N}) & = & \sum_{i,j\in M\backslash N}V_{ij}.\label{eq:Peirce-0 of sum}
\end{eqnarray}
In particular, $P_2(e_i) =P_{ii}$. By \cite[Lemma 2.1]{horo}, we have
\begin{equation}\label{lemma 2.1}
P_{ij}(\{e_k: k\in N\}) = P_{ij}(e_1, \ldots, e_n) \quad {\rm for} \quad i,j\in N,
\end{equation}
\begin{equation}\label{221}
P_{0j}(\{e_k: k\in N\}) = \sum_{i\in M \backslash N} P_{ij}(e_1, \ldots, e_n) \quad {\rm for} \quad j\in N,
\end{equation}
\begin{equation}\label{p00}
P_{00}(\{e_k: k\in N\}) = \sum_{i\leq j\,(i,j\in M \backslash N)} P_{ij}(e_1, \ldots, e_n).
\end{equation}

The Peirce projections provide a very useful formulation of the Bergman
operators.
Let $e_{1},\dots,e_{n}$
be  mutually orthogonal tripotents in a JB*-triple $V$ and
let $x=\sum_{i=1}^{n}\lambda_{i}e_{i}$ with
$\lambda_{i}\in\mathbb{D}$.
Then the Bergman
operator $B(x,x)$ satisfies
\begin{eqnarray}
B(x,x) & = & \sum_{0\leq i\leq j\leq n}(1-|\lambda_{i}|^{2})(1-|\lambda_{j}|^{2})P_{ij}\label{eq:Loos Bergman}
\end{eqnarray}
where we set $\lambda_{0}=0$ and $P_{ij}=P_{ij}(e_1, \ldots,e_n)$ (cf.\,\cite[Corollary 3.15]{LOOS}).
This gives the following formulae
for the square roots
\begin{eqnarray}
B(x,x)^{1/2} & = & \sum_{0\leq i\leq j\leq n}(1-|\lambda_{i}|^{2})^{1/2}(1-|\lambda_{j}|^{2})^{1/2}P_{ij}\label{eq:Bergman Sq Rt}\\
B(x,x)^{-1/2} & = & \sum_{0\leq i\leq j\leq n}(1-|\lambda_{i}|^{2})^{-1/2}(1-|\lambda_{j}|^{2})^{-1/2}P_{ij}.
\label{eq:Bergman Neg Sq Rt}
\end{eqnarray}
For arbitrary $x$ in the open unit ball $D \subset V$, Loos has shown in \cite[Corollary]{loos} the  remarkable formula
$$B(x,x)^{1/2} = B(\psi(x,x),x) $$
where $\psi: \overline D \times \overline D \longrightarrow
\overline D$ is a continuous map constructed using binomial series.

\begin{lemma}\label{baab1}
Let $a$ and $b$ be two mutually orthogonal elements in a JB*-triple $V$ with open unit ball $D$. Then
$B(c,a)(b) = b$ for all $c\in V$ and $B(a,a)^{1/2}(b)=b$ if $a\in D$.
\end{lemma}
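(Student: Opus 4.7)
The plan is a direct expansion of the definition (\ref{b1}) of the Bergman operator together with the two-sided nature of orthogonality in a JB*-triple. To prove $B(c,a)(b)=b$, the key observation is that the hypothesis $a\bo b=0$ entails $b\bo a=0$ as well (as recalled in the excerpt), so that evaluating $a\bo b$ at $a$ yields $\{a,b,a\}=0$ and this kills the Jordan-quadratic term $\{c,\{a,b,a\},c\}$ of the Bergman operator. Simultaneously, the symmetry of the Jordan triple product in its outer variables turns $(c\bo a)(b)=\{c,a,b\}$ into $\{b,a,c\}=(b\bo a)(c)=0$, which removes the cross term. What remains is $b$, valid for every $c\in V$.

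For the assertion $B(a,a)^{1/2}(b)=b$ when $a\in D$, I would not attempt to analyse the square root through series or functional calculus. Instead I would invoke the Loos representation $B(a,a)^{1/2}=B(\psi(a,a),a)$ recorded earlier in this section, whose second slot is still $a$. Since $\psi(a,a)\in\overline D\subset V$, the first half of the lemma applied with $c=\psi(a,a)$ yields $B(a,a)^{1/2}(b)=b$ immediately.

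I do not expect any genuine obstacle. The only judgement call is whether to rely on the Loos formula or instead to derive $B(a,a)^{1/2}(b)=b$ from $B(a,a)(b)=b$ via uniqueness of the positive square root of $B(a,a)$; the former is by far the cleaner route and uses only what has already been recorded in the paper.
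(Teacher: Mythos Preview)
Your proposal is correct and follows essentially the same approach as the paper: expand $B(c,a)(b)$ using the definition, kill both nontrivial terms via $\{c,a,b\}=\{b,a,c\}=0$ and $\{a,b,a\}=0$, and then invoke Loos's formula $B(a,a)^{1/2}=B(\psi(a,a),a)$ to reduce the square-root assertion to the first part with $c=\psi(a,a)$.
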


 \begin{proof}
Evidently, $a \bo b=0$ implies
$$B(c,a)(b) = b - 2 \{c,a, b\} + \{c,\{a,b,a\},c\} = b.$$
It follows from  Loos's formula above that $ B(a,a)^{1/2}(b)= B(\psi(a,a), a) (b)=b$.
\end{proof}

\begin{lemma}\label{k=0}
Let $D$ be  the open unit ball of a JB*-triple $V$. Given  $a,b, c\in D$
such that $a \bo b=0$ and $c+b\in D$, we have
$g_{a+b} = g_a \circ g_b$ and
$$g_{_{a}}(c+b) =  g_{_{ a}}(c) + b.$$
\end{lemma}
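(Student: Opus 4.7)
The plan is to establish the displayed identity $g_a(c+b) = g_a(c) + b$ first by a direct manipulation of formula~(\ref{g2}), and then to derive $g_{a+b} = g_a \circ g_b$ from it via Cartan's rigidity theorem for biholomorphisms fixing a point, combined with a factorization of the Bergman operator.

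For the displayed identity, the hypothesis $a \bo b = 0$ gives $b \bo a = 0$, whence $(c+b) \bo a = c \bo a$ as operators on $V$. The symmetry of the Jordan triple product in its outer variables gives $(c \bo a)(b) = \{c, a, b\} = \{b, a, c\} = (b \bo a)(c) = 0$, so $(I + c \bo a)(b) = b$, and therefore $(I + c \bo a)^{-1}(c + b) = (I + c \bo a)^{-1}(c) + b$. Applying $B(a, a)^{1/2}$ to both sides and using Lemma~\ref{baab1}, which gives $B(a, a)^{1/2}(b) = b$, produces $g_a(c+b) = g_a(c) + b$.

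For the composition identity, set $h := g_{-(a+b)} \circ g_a \circ g_b$, a biholomorphism of $D$. Taking $c = 0$ in the identity just proved yields $g_a(b) = g_a(0) + b = a+b$, so $h(0) = g_{-(a+b)}(a+b) = 0$. By the chain rule, $dh(0) = B(a+b, a+b)^{-1/2} \circ dg_a(b) \circ B(b, b)^{1/2}$. Differentiating~(\ref{g2}) at $y = b$ and using $(I + b \bo a) = I$ (from $b \bo a = 0$) together with $\{v, a, b\} = \{b, a, v\} = (b \bo a)(v) = 0$ for every $v \in V$, one finds $dg_a(b) = B(a, a)^{1/2}$; hence $dh(0) = B(a+b, a+b)^{-1/2} B(a, a)^{1/2} B(b, b)^{1/2}$, and by the classical rigidity theorem (a biholomorphism of a bounded domain fixing a point with identity derivative is the identity) it will suffice to show $dh(0) = I$.

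The crux of the argument, and the step I expect to be the hardest, is therefore the Bergman factorization $B(a+b, a+b)^{1/2} = B(a, a)^{1/2} B(b, b)^{1/2}$. I would establish this by taking spectral decompositions $a = \sum_i \lambda_i e_i$ and $b = \sum_j \mu_j f_j$ into orthogonal tripotents in the JB*-subtriple each element generates; the hypothesis $a \bo b = 0$ forces the combined family $\{e_i\} \cup \{f_j\}$ to be mutually orthogonal. Formulae~(\ref{eq:Loos Bergman}) and~(\ref{eq:Bergman Sq Rt}) then exhibit $B(a, a)$, $B(b, b)$, and $B(a+b, a+b)$ as diagonal operators in the induced joint Peirce decomposition, whose eigenvalues on each joint Peirce projection $P_{ij}$ depend respectively only on the $|\lambda_i|^2$, only on the $|\mu_j|^2$, and on both families; a direct comparison of these eigenvalues shows $B(a, a) B(b, b) = B(a+b, a+b)$. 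Since $B(a, a)$ and $B(b, b)$ are commuting positive operators, their positive square roots commute and multiply to the positive square root of the product, yielding the required factorization and, via Cartan's theorem, the identity $g_{a+b} = g_a \circ g_b$.
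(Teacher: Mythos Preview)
Your proof of the displayed identity $g_a(c+b)=g_a(c)+b$ is essentially the paper's own: both reduce $(I+(c+b)\bo a)^{-1}$ to $(I+c\bo a)^{-1}$ via $b\bo a=0$, split off the $b$-term, and invoke Lemma~\ref{baab1}. For the composition identity $g_{a+b}=g_a\circ g_b$, the paper does not argue at all but simply cites Sauter's dissertation~\cite{sau}; your Cartan-rigidity route is therefore genuinely different (and more informative). The verification that $h=g_{-(a+b)}\circ g_a\circ g_b$ fixes $0$ and the computation $dh(0)=B(a+b,a+b)^{-1/2}B(a,a)^{1/2}B(b,b)^{1/2}$ are both correct.

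The one soft spot is the Bergman factorisation step. Formulae~(\ref{eq:Loos Bergman}) and~(\ref{eq:Bergman Sq Rt}) are stated for elements admitting a \emph{finite} decomposition into mutually orthogonal tripotents, whereas the lemma is for an arbitrary JB*-triple: the subtriple $V(a)$ generated by $a$ is triple-isomorphic to a commutative C*-algebra $C_0(\Omega)$, and $a$ need not have finite spectrum there, so your ``take spectral decompositions'' move is not literally available in general. The remedy is easy and does not change your strategy: the identity $B(a+b,a+b)=B(a,a)B(b,b)$ for orthogonal $a,b$ holds in any JB*-triple (it follows from the Jordan-pair addition formula for Bergman operators, cf.~\cite{LOOS}), the two factors commute, and the square-root version then follows either from Loos's formula $B(x,x)^{1/2}=B(\psi(x,x),x)$ quoted after~(\ref{eq:Bergman Neg Sq Rt}) or by approximating $a$ and $b$ by finite-spectrum elements within $V(a)$ and $V(b)$ and passing to the limit. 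With that adjustment your argument goes through.
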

\begin{proof}
We note that $\|a+b\| = \max \{\|a\|, \|b\|\} <1$.
The first assertion has been proved in \cite{sau}.
We have
\begin{eqnarray*}
g_{_{a}}(c+b) & = & a + B( a, a)^{1/2}(I + (c+b) \bo a)^{-1}(c+b)\\
&=& a + B( a, a)^{1/2}(I+ c\bo a)^{-1}(c+b)\\
&=& a +  B( a, a)^{1/2}(I -c\bo a + (c\bo a)^2 - \cdots )(c+b)\\
&=& a +  B( a, a)^{1/2}(I-c \bo a + (c\bo a)^2 - \cdots)(c) +  B( a, a)^{1/2}(b)\\
&=& a +  B( a, a)^{1/2}(I+ c\bo a)^{-1}(c) +  b
=  g_{_{a}}(c) +  b
\end{eqnarray*}
by Lemma \ref{baab1}
\end{proof}

\begin{lemma}\label{inc}
Let $D$ be a bounded symmetric domain and let $a\in D$ be given by
$$a = \beta_1 e_1 + \cdots \beta_p e_p \qquad (\beta_1, \ldots, \beta_p \in \mathbb{D})$$
where $e_1, \ldots, e_p$ are mutually orthogonal tripotents. Then for $\alpha_1, \ldots, \alpha_p \in \mathbb{D}$,
we have
$$g_a(\alpha_1e_1 + \cdots + \alpha_p e_p) = \psi_{\beta_1}(\alpha_1)e_1 + \cdots + \psi_{\beta_p}(\alpha_p)e_p$$
where  $\psi_{\beta_j} : \mathbb{D}\longrightarrow \mathbb{D}$, for $j=1, \ldots, p$,
are transvections induced  $\beta_j \in \mathbb{D}$, with
$$\psi_{\beta_j}(z) = \frac{\beta_j +z}{1+ \overline\beta_j z} \qquad (z\in \mathbb{D}).$$
\end{lemma}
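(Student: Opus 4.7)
The plan is to decompose $g_a$ as a composition of commuting one-dimensional transvections and then carry out a scalar calculation on each $\mathbb{C}e_j$ to recover the usual disc Möbius map $\psi_{\beta_j}$.

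First I would observe that the mutually orthogonal tripotents $e_1,\dots,e_p$ give mutually orthogonal elements $\beta_1 e_1,\dots,\beta_p e_p$, with $\|\beta_{j}e_{j}+\cdots+\beta_{p}e_{p}\|=\max_{k\geq j}|\beta_{k}|<1$ by (\ref{osum}), so repeated application of the factorization identity $g_{u+v}=g_u\circ g_v$ from Lemma \ref{k=0} yields
\begin{equation*}
g_a = g_{\beta_1 e_1}\circ g_{\beta_2 e_2}\circ\cdots\circ g_{\beta_p e_p}.
\end{equation*}
The second identity in Lemma \ref{k=0} shows that, since $\beta_j e_j$ is orthogonal to $\sum_{k\neq j}\alpha_k e_k$, each factor $g_{\beta_j e_j}$ leaves the orthogonal complement untouched and acts only on the $j$-th coordinate. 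So once I know that $g_{\beta_j e_j}(\alpha_j e_j)=\psi_{\beta_j}(\alpha_j)\,e_j$, an easy induction on $p$, in which each intermediate iterate preserves the form $\sum_k\gamma_k e_k$ with the same orthogonal tripotents, finishes the proof.

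It remains to verify the one-dimensional identity. Here I would restrict the defining formula (\ref{g2}) of $g_{\beta_j e_j}$ to the line $\mathbb{C}e_j$: formula (\ref{eq:Bergman Sq Rt}) applied to the single tripotent $e_j$ gives $B(\beta_j e_j,\beta_j e_j)^{1/2}=(1-|\beta_j|^2)I$ on $V_2(e_j)\supset \mathbb{C}e_j$, and since $\{e_j,e_j,e_j\}=e_j$ and the triple product is conjugate linear in the middle slot, the box operator $(\alpha_j e_j)\bo(\beta_j e_j)$ acts on $\mathbb{C}e_j$ as scalar multiplication by $\alpha_j\overline{\beta_j}$. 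Substituting into (\ref{g2}), summing the resulting geometric series to obtain $(I+(\alpha_j e_j)\bo(\beta_j e_j))^{-1}(\alpha_j e_j)=\alpha_j e_j/(1+\overline{\beta_j}\alpha_j)$, and simplifying the scalar fraction produces $(\beta_j+\alpha_j)/(1+\overline{\beta_j}\alpha_j)\cdot e_j=\psi_{\beta_j}(\alpha_j)e_j$, as required. I do not expect any genuine obstacle in this argument; the only point needing mild care is checking that orthogonality of the $e_k$ propagates through the iterated composition so that Lemma \ref{k=0} applies at every step, and this is automatic from the factorization.
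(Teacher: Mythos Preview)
Your proposal is correct and follows essentially the same approach as the paper: factor $g_a=g_{\beta_1 e_1}\circ\cdots\circ g_{\beta_p e_p}$ via Lemma~\ref{k=0}, use the second identity there to peel off orthogonal summands, and compute $g_{\beta_j e_j}(\alpha_j e_j)$ directly from (\ref{g2}) by expanding the resolvent as a geometric series and invoking (\ref{eq:Bergman Sq Rt}). The only difference is cosmetic ordering---the paper does the scalar computation first and then the iterated application of Lemma~\ref{k=0}, whereas you reverse the presentation.
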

\begin{proof} First, we have
\begin{eqnarray*}
&&g_{_{\beta_1 e_1}}(\alpha_1 e_1)  =  \beta_1 e_1
+ B(\beta_1 e_1 , \beta_1 e_1 )^{1/2}(I + \alpha_1 e_1 \bo \beta_1 e_1)^{-1}(\alpha_1 e_1)\\
&=& \beta_1 e_1
+ B(\beta_1 e_1 , \beta_1 e_1 )^{1/2}(I +\alpha_1 \overline \beta_1 e_1\bo e_1)^{-1}(\alpha_1 e_1)\\
&=& \beta_1 e_1 + B(\beta_1 e_1 , \beta_1 e_1 )^{1/2}(I -  \alpha_1\overline\beta_1  e_1 \bo e_1
+ \alpha_1^2 \overline\beta_1^2 (e_1 \bo e_1)^2 - \cdots)(\alpha_1 e_1)\\
&=&\beta_1 e_1 + B(\beta_1 e_1 , \beta_1 e_1 )^{1/2}(1 -  \alpha_1\overline\beta_1
+ \alpha_1^2 \overline\beta_1^2 - \cdots)(\alpha_1 e_1)\\
&=& \beta_1 e_1 + B(\beta_1 e_1 , \beta_1 e_1 )^{1/2}(1 + \alpha_1\overline\beta_1)^{-1}(\alpha_1 e_1)\\
&=& \beta_1 e_1 + \frac{\alpha_1(1-|\beta_1|^2)}{1 + \alpha_1\overline\beta_1}e_1 \qquad ({\rm by}~ (\ref{eq:Bergman Sq Rt}))\\
&=&  \frac{\beta_1+\alpha_1}{1 + \alpha_1\overline\beta_1}e_1
=  \psi_{\beta_1}(\alpha_1) e_1.
\end{eqnarray*}
Now a repeated application of Lemma \ref{k=0} gives
\begin{eqnarray*}
&&g_a(\alpha_1e_1 + \cdots + \alpha_p e_p)
= g_{_{\beta_1 e_1}} \circ \cdots\circ g_{_{\beta_p e_p}}(\alpha_1e_1 + \cdots + \alpha_p e_p)\\
&=& g_{_{\beta_1 e_1}} \circ \cdots\circ g_{_{\beta_{p-1} e_{p-1}}}(\alpha_1e_1 + \cdots + \alpha_{p-1}e_{p-1}+
g_{_{\beta_p e_p}}(\alpha_p e_p))\\
&=&g_{_{\beta_1 e_1}} \circ \cdots\circ g_{_{\beta_{p-1} e_{p-1}}}(\alpha_1e_1 + \cdots + \alpha_{p-1}e_{p-1}+
\psi_{\beta_p }(\alpha_p) e_p)\\
&=&g_{_{\beta_1 e_1}} \circ \cdots\circ g_{_{\beta_{p-2} e_{p-2}}}(\alpha_1e_1 + \cdots + g_{_{\beta_{p-1}e_{p-1}}}(\alpha_{p-1}e_{p-1})+
\psi_{\beta_p }(\alpha_p) e_p)\\
&=& g_{_{\beta_1 e_1}} \circ \cdots \circ g_{_{\beta_{p-2} e_{p-2}}}(\alpha_1e_1 + \cdots + \psi_{\beta_{p-1}}(\alpha_{p-1})e_{p-1}+
\psi_{\beta_p }(\alpha_p) e_p)\\
&=& \psi_{\beta_1}(\alpha_1)e_1 + \cdots + \psi_{\beta_p}(\alpha_p)e_p.
\end{eqnarray*}
\end{proof}

Let $D$ be a bounded symmetric domain realised as the open unit ball of a JB*-triple $V$.
A vector subspace  $E$ of $V$ is called a {\it subtriple} if $a,b,c \in E$ implies
$\{a,b,c\} \in E$. The {\it rank} of $D$,
and of $V$, is defined to be the extended real number
$$\sup\{ \dim V(a): a\in V\} \in \mathbb{N}\cup \{\infty\}$$
where $V(a)$ denotes the smallest closed subtriple of $V$ containing $a$.
For instance, the polydisc $\mathbb{D}^d$ is of rank $d$.
In finite dimensions,
the rank is the (common) dimension of a maximal abelian subspace of $\frak p$ in the Cartan decomposition
(\ref{rank}) associated with $D$ (cf.\,\cite[p.\,185]{book2}).

We say that $V$, or $D$, is of {\it finite rank} if it is of rank $r \in \mathbb{N}$, in which case its rank is the cardinality of a maximal  family of
mutually orthogonal minimal tripotents \cite[p.\,187]{book2}. Finite-rank JB*-triples are reflexive Banach spaces
\cite[Theorem 3.3.5]{book2} and hence their open unit ball $D$ is relatively weakly compact, that is,
the norm closure $\overline D$ is compact in the weak topology $w(V,V^*)$.

The {\it rank} of a tripotent $e\in V$ is defined to be the rank
of the Peirce 2-space $V_2(e)$. If $e$ is a finite sum $ e=e_1 + \cdots +e_p$ of pairwise orthogonal minimal tripotents,
then $e_1, \ldots, e_p \in V_2(e)$ and the rank of $e$ is $p$.
If $V$ is of finite rank $r$ and $p=r$, then  $V_0(e) =\{0\}$ and
$e$ is a maximal tripotent in $V$.

\'E. Cartan's classification of finite dimensional bounded symmetric domains in \cite{cartan} can be extended to
finite-rank domains \cite[Theorem 3.3.5;(3.31)]{book2}.
 The irreducible ones are (biholomorphic to) the open unit balls of the following six types of JB*-triples,
 called {\it Cartan factors}:
\begin{equation*}
\begin{aligned}
{\rm (I)} &\q {L}(\mathbb{C}^d,K)\q (d =1,2, \ldots),~ {\rm rank}\,= d\leq \dim K,\q \\
\text{\rm (II)} &\q
\{z\in {L}(\mathbb{C}^d): z^t=-z\}\q (d = 5,6,  \ldots),~ {\rm rank}\,=\left[\frac{d}{2}\right]\\
\text{\rm (III)} &\q  \{z\in
{L}(\mathbb{C}^d): z^t=z\} \q (d = 2,3,  \ldots),~ {\rm rank}\,= d\\
\text{\rm (IV)} &\q \text{\rm spin
factor},~ {\rm rank}\,= 2\\
\text{\rm (V)} &\q M_{1,2}(\mathcal{O}),~ {\rm rank}\,= 2\\
\text{\rm (VI)} &\q H_3(\mathcal{O}),~ {\rm rank}\,= 3
\end{aligned}
\end{equation*}
where ${L}(\mathbb{C}^d,K)$ is the JB*-triple of bounded linear
operators from $\mathbb{C}^d$ to a Hilbert space $K$ and
$z^t$ denotes
the transpose of $z$ in the JB*-triple ${L}(\mathbb{C}^d) $
of $d \times d$ complex matrices. 
The JB*-triple $M_{1,2}(\mathcal{O})$ consists of $1 \times 2$ matrices over the Cayley algebra $\mathcal{O}$,
and $H_3(\mathcal{O})$ consists of $3 \times 3$ hermitian matrices over $\mathcal{O}$.

A {\it spin factor} is a JB*-triple $(V, \{\cdot,\cdot,\cdot\})$, with $\dim V>2$,  equipped with a complete inner product $\langle \cdot,\cdot\rangle$
and a conjugate linear isometry $^*: V \longrightarrow V$ satisfying
\begin{enumerate}
\item[\rm (i)] $x^{**} = x$,~ $\langle x^*, y^*\rangle = \langle y, x\rangle$,
\item[\rm (ii)] $\{x,y,z\} = \frac{1}{2}(\langle x,y\rangle z+ \langle z,y\rangle x - \langle x, z^*\rangle y^*) \quad (x,y,z \in V)$.
\end{enumerate}
The open unit ball of a spin factor $V$ is known as a {\it Lie ball}.
The nonzero tripotents in  $V$ are either minimal or maximal, and the minimal tripotents are exactly
the extreme points $e$ of the closed unit ball of the Hilbert space $(V, \langle \cdot,\cdot\rangle)$ satisfying
$\langle e,e^*\rangle =0$, in which case $e^*$ is also a minimal tripotent and we have
\begin{equation}\label{spin}
P_2(e)= \langle \cdot, e\rangle e, \quad P_0(e)= \langle \cdot, e^*\rangle e^*. 
\end{equation}
We refer to \cite[3.3]{book2} for more details.

Evidently, the rank of a finite dimensional domain
is finite although an infinite dimensional bounded symmetric domain can be of finite rank,
as shown in the examples of (I) and (IV) above.

For a finite-rank bounded symmetric domain $D$ realised as the open unit ball of a JB*-triple, it has been shown
in \cite[Theorem 3.4.6]{book2} that its Shilov boundary $\Sigma(D)$ is exactly the manifold $T_0(D)$ of maximal tripotents
in $\partial D$, which is nonempty.

\begin{example}\label{sigma*}
 Given a Lie ball $D$ in a spin factor $V$, we have $\Sigma(D)=T_1(D) \neq T(D)$.
Indeed, $V$ always contains minimal tripotents and they are not maximal tripotents.
Given a maximal tripotent $e\in V$, the box operator $e \bo e$ is the identity operator on $V$
\cite[Lemma 2.5.4]{book2}.
Hence $x\in V_1(e)$ implies $\frac{x}{2} = e \bo e(x) =x$ and $x=0$, proving that $e$ is a structural tripotent.
Conversely, if $e$ is a structural tripotent, then $e$ cannot be a minimal tripotent for otherwise we would have
$V = V_2(e) \oplus V_0(e) = \mathbb{C}e + \mathbb{C}e^*$ and $\dim V =2$. Hence $e$ must be a maximal tripotent.
This proves $\Sigma(D)=T_1(D)$.
\end{example}

An important property of a finite-rank JB*-triple $V$ is the existence of a spectral decomposition for each
element of $V$. If $V$ is of rank $r<\infty$, then each $a\in V$ admits a spectral decomposition of the form
$$a = \alpha_1 e_1 + \alpha_2 e_2 + \cdots + \alpha_r e_r$$
where $\|a\|=\alpha_1 \geq \alpha_2 \geq \cdots \geq \alpha_r \geq 0$ and $e_1, e_2, \ldots, e_r$ are mutually
orthogonal minimal tripotents (cf.\,\cite[(3.32)]{book2}).

\begin{remark}\label{uni2}
The spectral values $\alpha_1, \ldots,  \alpha_r$ in the above spectral decomposition are unique, but the minimal tripotents $e_i$ need not be.
We can however collect terms with equal positive spectral values in the sum and write $a = \sum_{i=1}^{r'}\alpha_ic_i$, where $r'\leq r$ and  $c_i$'s are (not necessarily minimal) pairwise orthogonal tripotents. In this case,  $c_i$'s are unique  \cite[Theorem 1.2.34]{book}.
\end{remark}

\begin{remark}\label{sub}
Given a sequence $(z_m)$ in a  bounded symmetric domain $D$ of rank $r<\infty$, with spectral decomposition
$$z_m =  \alpha_{m1}e_{m1} + \cdots + \alpha_{mr}e_{mr} \quad (1>\|z_m\| = \alpha_{m1} \geq \cdots \geq \alpha_{mr}\geq 0),$$
we can, by weak compactness of $\overline D$, choose weakly convergent subsequences of the sequences
$( \alpha_{mj}e_{mj})$ successively for $j=1, \ldots, r$,  and obtain
a subsequence $(z_{m_k})$ of $(z_m)$, with spectral decomposition
$$z_{m_k} =  \alpha_{m_k,1}e_{m_k, 1} + \cdots + \alpha_{m_k, r}e_{m_k, r}
\quad (\alpha_{m_k,1} \geq \cdots \geq \alpha_{m_k,r}\geq 0)$$
such that each sequence $( \alpha_{m_k,j})$ converges to $\alpha_j \in [0,1]$ and $(e_{m_k, j})$
weakly converges to $e_j \in \overline D$.
\end{remark}

In the sequel, we will frequently make use of the following lemma which has been proved in
\cite[Lemma 5.8]{horo}.
\begin{lemma}\label{zmm} Let $D$ be the open unit ball of a JB*-triple of finite rank and let
$(z_m)$ be a sequence in $D$ norm converging to some $\xi \in \overline D$, with spectral
decomposition
$$z_m =  \alpha_{m1}e_{m1} + \cdots + \alpha_{mr}e_{mr} \quad (\|z_m\| = \alpha_{m1} \geq \cdots \geq \alpha_{mr}\geq 0)$$
such that each sequence $( \alpha_{mi})$ converges to $\alpha_i\in [0,1] $
and  $(e_{mi})$
 weakly converges to $e_i\in \overline D$, for $i=1, \ldots, r$.
Then there is some $p\in \{1, \ldots, r\}$ such that
\begin{enumerate}
\item[(i)] $\alpha_i >0$ and $(e_{mi})_m$ norm converges to $e_i\,$, for $i = 1, \ldots, p$,
\item[(ii)] $\alpha_i=0$ for $i \geq p+1$,
\item[(iii)]$e_1, \ldots, e_p$ are mutually orthogonal minimal tripotents
\end{enumerate}
and $\xi = \alpha_1 e_1 + \cdots + \alpha_p e_p$.
\end{lemma}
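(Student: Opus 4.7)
The plan is to combine the continuous odd functional calculus on the finite-rank JB*-triple $V$ with continuity of Peirce projections under tripotent perturbation, promoting the weak convergence of $(e_{mi})$ to norm convergence in a two-step reduction. Set $p=\max\{i:\alpha_i>0\}$; assertion (ii) is immediate. By norm continuity of the spectral values on a finite-rank JB*-triple, the positive spectral values of $\xi$ are exactly $\alpha_1,\ldots,\alpha_p$.

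I would first extract the ``grouped'' tripotents using functional calculus. Let $\beta_1>\cdots>\beta_s>0$ be the distinct positive values in $\{\alpha_i\}$, with $I_k=\{i:\alpha_i=\beta_k\}$, and let $\xi=\sum_k\beta_k c_k$ be the corresponding grouped spectral decomposition with unique orthogonal tripotents $c_k$ (Remark \ref{uni2}). For each $k$, pick an odd continuous $h_k$ on $[-1,1]$ satisfying $h_k(\beta_k)=1$, $h_k(\beta_{k'})=0$ for $k'\neq k$, and $h_k(0)=0$. Since the $e_{mj}$ are mutually orthogonal minimal tripotents, $h_k(z_m)=\sum_j h_k(\alpha_{mj})e_{mj}$, and norm continuity of the functional calculus gives $h_k(z_m)\to h_k(\xi)=c_k$ in norm. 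A coefficient analysis (the terms with $j\notin I_k$ have coefficients going to $0$, while for $i\in I_k$ the coefficients go to $1$) then forces $c_{mk}:=\sum_{i\in I_k}e_{mi}\to c_k$ in norm; passing to weak limits yields $c_k=\sum_{i\in I_k}e_i$.

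The main obstacle is descending from this group-level statement to individual norm convergence $e_{mi}\to e_i$ when $|I_k|>1$, since no function of $z_m$ alone can separate the $e_{mi}$ within a group. I would work inside $V_2(c_k)$, a finite-rank JB*-triple of rank $|I_k|$ in which $c_k$ is a unitary tripotent. Continuity of the map $e\mapsto P_2(e)=Q_e^2$ under norm perturbation yields $P_2(c_{mk})\to P_2(c_k)$ in operator norm, so each $e_i$ (for $i\in I_k$) lies in $V_2(c_k)$. Inside $V_2(c_k)$, the family $\{e_{mi}\}_{i\in I_k}$ is a frame---a maximal orthogonal system of minimal tripotents summing to the unit $c_{mk}$. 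I would show that the weak limits $\{e_i\}_{i\in I_k}$ form such a frame in $V_2(c_k)$: orthogonality of the $e_i$ passes through the joint Peirce projections, and minimality follows from a rank count (the sum $c_k$ has rank $|I_k|=\mathrm{rank}\,V_2(c_k)$ and is a sum of $|I_k|$ nonzero tripotents in a rank-$|I_k|$ triple). Norm convergence $e_{mi}\to e_i$ then follows by reflexivity of the finite-rank triple $V_2(c_k)$ combined with the uniqueness of the realising frame, or by repeating the functional-calculus argument inside $V_2(c_{mk})$. Finally, $\xi=\alpha_1 e_1+\cdots+\alpha_p e_p$ follows by taking norm limits in $z_m=\sum_i\alpha_{mi}e_{mi}$: the $i\le p$ terms converge to $\alpha_i e_i$ in norm, while the $i>p$ terms vanish since $\alpha_{mi}\to 0$ with $(e_{mi})$ bounded.
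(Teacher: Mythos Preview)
The paper does not supply its own proof; it merely cites \cite[Lemma 5.8]{horo}. Your functional-calculus step is sound: choosing odd continuous $h_k$ with $h_k(\beta_k)=1$ and $h_k(\beta_{k'})=0$ for $k'\neq k$, the norm continuity of $a\mapsto h_k(a)$ on a JB*-triple does give $c_{mk}:=\sum_{i\in I_k}e_{mi}\to c_k$ in norm, and the identification $c_k=\sum_{i\in I_k}e_i$ via weak limits is correct.

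The descent from the grouped tripotents $c_{mk}$ to the individual $e_{mi}$ is where your argument breaks. Neither of your proposed mechanisms works: ``repeating the functional-calculus argument inside $V_2(c_{mk})$'' cannot separate tripotents carrying the \emph{same} spectral value, and ``orthogonality of the $e_i$ passes through the joint Peirce projections'' together with ``uniqueness of the realising frame'' is not a proof---weak limits of mutually triple-orthogonal minimal tripotents need not be tripotents at all. Concretely, in an infinite-dimensional spin factor $V$ with real Hilbert orthonormal basis $(v_n)$ (so $v_n^*=v_n$), set
\[
e_{m1}=\tfrac{1}{\sqrt2}(v_0+iv_m),\qquad e_{m2}=e_{m1}^*=\tfrac{1}{\sqrt2}(v_0-iv_m),\qquad z_m=(1-\tfrac1m)(e_{m1}+e_{m2})=(1-\tfrac1m)\sqrt2\,v_0.
\]
Then $e_{m1},e_{m2}$ are triple-orthogonal minimal tripotents, $z_m\to\xi=\sqrt2\,v_0\in\partial D$ in norm, $\alpha_{m1}=\alpha_{m2}\to 1$, and both $(e_{m1})$ and $(e_{m2})$ converge weakly (the spin norm being equivalent to the Hilbert norm) to $\tfrac{1}{\sqrt2}v_0\in\overline D$. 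Every hypothesis of the lemma is satisfied, yet $\|e_{m1}-e_{n1}\|$ is bounded away from $0$ for $m\neq n$, so $(e_{m1})$ has no norm-convergent subsequence, and the common weak limit is not even a tripotent. Thus the statement, read literally, fails for this choice of spectral decomposition; some additional control on how the frames $(e_{mi})_i$ are selected---e.g.\ that they are obtained from $z_m$ by a \emph{fixed} continuous spectral procedure, or that the limiting spectral values are distinct---must be implicit in \cite{horo}. You should consult the original formulation there before attempting to close this step.
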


\begin{definition}\label{abelian}
A JB*-triple $V$ is called {\it abelian} if
$$\{a,b,\,\{x,y,z\}\} =\{a,\,\{b,x,y\},\, z\}\} =\{\{a,b,x\},\, y,z\}\}  \qquad (a,b,x,y,z \in V).$$
We call a bounded symmetric domain {\it abelian} if it is biholomorphic to the open unit ball of
an abelian JB*-triple.
\end{definition}

Abelian C*-algebras are abelian JB*-triples. In particular, a polydisc
$\mathbb{D}^d$ is abelian as it is the open unit ball of the $d$-fold $\ell_\infty$-sum of $\mathbb{C}$.
Given a tripotent $e$ in an abelian JB*-triple $V$, we have $V_1(e)=\{0\}$ \cite[Lemma 1.2.38]{book}.
Hence we have $T(D)=T_1(D)$ for an abelian bounded symmetric domain $D$.

\begin{lemma}\label{sum} Let $V$ be an abelian JB*-triple. Given two minimal tripotents $c$ and $e$ in $V$,
we have either $c= \lambda e$ for some $|\lambda|=1$ or $c\in V_0(e)$.
Let $\{e_1, \ldots, e_n\}$ and $\{c_1, \ldots, c_m\}$ be two sets of mutually orthogonal
minimal tripotents such that
$$e_1 + \cdots + e_n = \gamma_1c_1 + \cdots +\gamma_m c_m \qquad (\gamma_1, \dots,\gamma_m \in \mathbb{C}\backslash\{0\}).$$
Then $n=m$ and there is a permutation $\tau$ of $\{1, \ldots, n\}$ such that $e_i = \gamma_{\tau(i)} c_{\tau(i)}$
 and  $|\gamma_i|=1$ for all $i$.
 \end{lemma}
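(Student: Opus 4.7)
The plan is to derive both assertions from Lemma \ref{24}, which applies whenever $V_1(e)=\{0\}$ for the minimal tripotent $e$ in question. Since $V$ is abelian, every tripotent satisfies this condition (as noted just before the statement of the lemma), so Lemma \ref{24} is available for every pair of minimal tripotents without restriction. For the first assertion I simply feed the given $c$ and $e$ into Lemma \ref{24}: either $c=\lambda e$ with $|\lambda|=1$, or $c \bo e=0$, and orthogonality of tripotents is equivalent to $c\in V_0(e)$ as recorded in the discussion preceding that lemma.

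For the second assertion the idea is to test the identity $e_1+\cdots+e_n=\gamma_1 c_1+\cdots+\gamma_m c_m$ with the Peirce $2$-projection $P_2(e_i)$. First, applying the first assertion pair by pair, I observe that for every $(i,j)$ either $e_i=\mu_{ij}c_j$ with $|\mu_{ij}|=1$, or $e_i\in V_0(c_j)$, the latter being symmetric and hence equivalent to $c_j\in V_0(e_i)$. For each fixed $i$, the first alternative can occur for at most one index $j$: if $\mu_{ij_1}c_{j_1}=e_i=\mu_{ij_2}c_{j_2}$ with $j_1\neq j_2$, then $c_{j_1}$ and $c_{j_2}$ would be nonzero scalar multiples of each other, contradicting their orthogonality (mutually orthogonal nonzero tripotents are linearly independent, as seen by applying $P_2(c_k)$ to any linear relation).

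Applying $P_2(e_i)$ to both sides of the identity, the left-hand side collapses to $e_i$ since $e_k\in V_0(e_i)$ for $k\neq i$, and the right-hand side collapses to the contribution of the unique index $j=\tau(i)$ (if any) for which $e_i$ is a unit scalar multiple of $c_j$, yielding $e_i=\gamma_{\tau(i)}\mu_{i\tau(i)}^{-1}e_i$. Since $e_i\neq 0$, such a $\tau(i)$ must exist, and we read off $\gamma_{\tau(i)}=\mu_{i\tau(i)}$ of modulus one together with $e_i=\gamma_{\tau(i)}c_{\tau(i)}$. The resulting map $\tau:\{1,\ldots,n\}\to\{1,\ldots,m\}$ is injective, for $\tau(i_1)=\tau(i_2)$ would force $e_{i_1}$ and $e_{i_2}$ to be parallel to the same $c_{\tau(i_1)}$, violating their orthogonality. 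Surjectivity follows because if some $c_{j_0}$ were absent from the image of $\tau$, then substituting $e_i=\gamma_{\tau(i)}c_{\tau(i)}$ into the identity and invoking the linear independence of $c_1,\ldots,c_m$ would force $\gamma_{j_0}=0$, contrary to hypothesis. Hence $n=m$ and $\tau$ is a permutation with the stated properties.

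The main obstacle is conceptual rather than technical: one must recognise that the abelian hypothesis is used only to guarantee $V_1(e)=\{0\}$ for \emph{every} minimal tripotent, so that Lemma \ref{24} applies symmetrically to every pair $(e_i,c_j)$. Once that symmetry is in hand, a single Peirce $2$-projection against each $e_i$ reduces the vector identity to a scalar equation in one minimal tripotent, from which the pairing and the modulus-one conclusion drop out with no further work.
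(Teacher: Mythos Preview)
Your proof is correct and follows essentially the same strategy as the paper's: reduce to Lemma~\ref{24} via the abelian hypothesis $V_1(e)=\{0\}$, then use Peirce projections to match each $e_i$ with a unique $c_j$. The organisation differs slightly. The paper first cubes the identity $e_1+\cdots+e_n=\gamma_1c_1+\cdots+\gamma_mc_m$ to obtain $|\gamma_i|=1$ directly, then invokes a rank count to get $n=m$, and only afterwards applies $e_1\bo e_1$ (which equals $P_2(e_1)$ in the abelian setting) to locate the matching $c_j$; it then applies $P_2(c_j)$ to the identity to pin down the scalar. You instead apply the first assertion to every pair $(e_i,c_j)$ at the outset, project once with $P_2(e_i)$, and let $|\gamma_i|=1$ and $n=m$ fall out of the resulting bijection and linear independence. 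Your route is marginally more economical in that it avoids the cubing step and the appeal to rank, while the paper's route makes $|\gamma_i|=1$ visible independently of the pairing; both arrive at the same place with the same tools.
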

\begin{proof}
Since $V$ is abelian, we have $V_1(e)=\{0\}$ and
the first assertion follows from Lemma \ref{24}.

Given the two orthogonal sums  above, we have
\begin{eqnarray*}
&&e_1 + \cdots + e_n= \{e_1 + \cdots + e_n,\, e_1 + \cdots + e_n,\, e_1 + \cdots + e_n\}\\
&=& \{ \gamma_1c_1 + \cdots +\gamma_m c_m, \, \gamma_1c_1 + \cdots +\gamma_m c_m,
\, \gamma_1c_1 + \cdots +\gamma_m c_m \}\\
&=&  |\gamma_1|^2\gamma_1c_1 + \cdots +|\gamma_m|^2\gamma_m c_m = \gamma_1c_1 + \cdots +\gamma_m c_m
\end{eqnarray*}
which implies  $|\gamma_i|=1$ for $i=1, \ldots, m$.
Therefore both sides of the above  represent the same tripotent of rank $n=m$.

On the other hand,
$$e_1 = e_1 \bo e_1(e_1+ \cdots+e_n) = \gamma_1 e_1 \bo e_1(c_1) + \cdots + \gamma_n e_1 \bo e_1(c_n)$$
implies $\gamma_j e_1 \bo e_1(c_j) \neq 0$ for some $j\in \{1, \ldots, n\}$.
In this case, the first assertion entails $e_1 = \lambda_jc_j$ for some $|\lambda_j|=1$
and hence $c_j \bo (e_2 + \cdots + e_n) =0$.
Applying $P_2(c_j)$ to both sides of the given identity yields
$\lambda_j c_j =\gamma_j c_j$ and so $\lambda_j = \gamma_j$.

 Likewise
$e_2 = \gamma_{j'} c_{j'}$ form some $j'$, and $e_1 \bo e_2 =0$ implies $j'\neq j$.
A similar argument applies to $e_3, \ldots, e_n$.
 Hence one can define
a permutation $\tau$ on $\{1, \ldots ,n\}$ such that $e_i = \gamma_{\tau (i)}c_{\tau(i)}$.
\end{proof}

\begin{xrem} Without the abelian assumption, the preceding lemma is false. Let $V= L(\mathbb{C}^2)$
be the JB*-triple of $2\times 2$ complex matrices. Then we have the following two orthogonal sums of  minimal
tripotents:
\[ \left(\begin{matrix} 1& 0\vspace{.1in}\\ 0&0 \end{matrix}\right) +
 \left(\begin{matrix} 0& 0\vspace{.1in}\\ 0&1 \end{matrix}\right) =
\left(\begin{matrix} \frac{1}{2}& \frac{1}{2}\vspace{.1in}\\ \frac{1}{2}&\frac{1}{2} \end{matrix}\right) +
\left(\begin{matrix}\frac{1}{2}& -\frac{1}{2}\vspace{.1in}\\ -\frac{1}{2}&\frac{1}{2} \end{matrix}\right). \]
\end{xrem}

To conclude this section, we note that
polydiscs are the {\it only} finite-rank abelian  bounded symmetric domains.
Indeed, let $D$ be a finite-rank bounded symmetric domain realised as the open unit ball of a finite-rank
abelian JB*-triple $V$. By the classification theorem in \cite[Theorem 3.3.5]{book2}, $V$ decomposes into
a finite $\ell_\infty$-sum $V=V_1 \oplus_\infty \cdots \oplus_\infty V_d$ of finite-rank Cartan factors, where
each subtriple $V_j$ of $V$ is abelian.  Pick a minimal
tripotent $e$ in any Cartan factor $V_j$. Then the Peirce $1$-space $(V_j)_1(e)$ in $V_j$ vanishes and it follows
from \cite[p.\,311(Case $0$)]{book2} that $V_j=\mathbb{C}e$.
Therefore $V \approx \mathbb{C}\oplus_\infty \cdots \oplus_\infty \mathbb{C}$.

\section{Boundary structures}

To generalise the Denjoy-Wolff theorem, we will show that all limit points of every
orbit $\mathcal{O}(a)$ for a  fixed-point free compact holomorphic self-map $f$ on a
finite-rank bounded symmetric domain $D$ are
contained in one single {\it holomorphic boundary component} in $\partial D$,
provided that the extended Shilov boundary of $D$ contains sufficient orbit limit points.
We first describe these
components and their properties in this section.

\begin{definition}\label{holbdc} Let $U$ be a convex domain, with closure $\overline U$,
 in a complex Banach space $V$.
A subset $\Gamma\subset \overline U$ is called a
 {\it holomorphic
component}
if the following conditions
are satisfied:
\begin{enumerate}
\item[(i)] $\Gamma \neq \emptyset$; \item[(ii)] for each holomorphic map $ \gamma: \mathbb{D}\longrightarrow \overline U$,
either $\gamma(\mathbb{D})\subset \Gamma$ or $\gamma(\mathbb{D})
\subset \overline U \backslash \Gamma$; \item[(iii)] $\Gamma$ is minimal
with respect to (i) and (ii).
\end{enumerate}
\end{definition}
Two holomorphic components  are either equal or
disjoint. The domain $U$ is the unique open holomorphic component of $\overline U$, all others
are contained in the boundary $\partial U$ \cite{ks}. The components in $\partial U$ form a
partition of $\partial U$ and are called the {\it holomorphic boundary components} of $U$.

For each $a \in \overline U$, we denote by $\Gamma_a$ the holomorphic
component containing $a$.
Let $D$ be a bounded symmetric domain realised as the open unit ball
of a JB*-triple $V$.
Given a tripotent $e\in \partial D$, it has been shown in \cite{ks} that the holomorphic {\it boundary} component
$\Gamma_e$ containing $e$
is the convex set
\begin{equation}\label{e0} \Gamma_e= e+ (V_0(e) \cap D)\subset \partial D
\end{equation}
 where $V_0(e)$ is the Peirce $0$-space
of $e$. In particular, $\Gamma_e = \{e\}$ if $e$ is a maximal tripotent.
Further, if $D$ is of finite rank, then all holomorphic boundary components
in $\partial D$ are of this form.

It has been shown in \cite[Remark 3.4.14]{book2}
 that the norm closure  $\overline\Gamma_e$
  is a face of $\overline D$, that is, $ x, y
\in \overline \Gamma_e$ whenever $\lambda  x+ (1-\lambda)  y \in \overline\Gamma_e$ with
$0<\lambda<1$ and $x,  y\in \overline D$.

\begin{example}\label{bergph1} Let $D$ be the open unit ball of a Hilbert space $V$.
The tripotents of  $V$ are exactly the extreme points of
$\overline D$. The Peirce $0$-space $V_0(\xi)$ vanishes for each tripotent $\xi\in V$ and hence
the boundary components in $\partial D$ are the singletons $\{\xi\}$ with $\xi\in \partial D$.

Let $D=D_1 \times \cdots\times D_p$ be a Cartesian product of the open unit balls $D_j$ of
Hilbert spaces $V_j$
for $j=1, \ldots, p$. Then $D$ is the open unit ball of the JB*-triple
$$V= V_1 \oplus_{\infty} \cdots \oplus_{\infty} V_p.$$
A tripotent $\boldsymbol e \in V$ is of the form $\boldsymbol e=(\xi_1, \ldots, \xi_p)$,
 where
$$\|\xi_j\| = \left\{\begin{matrix} 1 & (j\in J)\\
    0 & (j\notin J) \end{matrix}\right. $$
for some non-empty set $J \subset \{1, \ldots, p\}$.
We have
$$V_0(\boldsymbol e) =\{(v_1, \ldots, v_p): (\xi_j \bo \xi_j)(v_j)=0\,  \forall j\}=\{(v_1, \ldots, v_p): v_j=0 ~{\rm for}~
j\in J\}.$$
Hence the boundary component
 $\Gamma_{\boldsymbol e} = \boldsymbol e+ (V_0(\boldsymbol e) \cap D)$ is of the form
 $$\Gamma_{\boldsymbol e}  =  \Gamma_1 \times \cdots \times  \Gamma_p
 \quad {\rm and} \quad \Gamma_j = \left\{\begin{matrix} \{\xi_j\} & (j\in J)\\
 D_j & (j \notin J) \end{matrix} \right. $$
and $\overline{\Gamma}_{\boldsymbol e}$ is a closed face of $\overline D$.
\end{example}

\begin{xexam} Let $D$ be a Lie ball, which is the open unit ball of a spin factor $V$. As noted before,
the nonzero tripotents in $V$ are either minimal or maximal. Hence a boundary component
$\Gamma_c$ in $\partial D$ is of the form $c + \mathbb{D}c^*$ if $c$ is a minimal tripotent
since $V_0(c)=\mathbb{C}c^*$, or of the form $\{c\}$ if $c$ is a maximal tripotent.
\end{xexam}

The following useful result has been shown in \cite[Lemma 3.4.11]{book2}.

\begin{lemma}\label{3.2} Let $D$ be a domain in a Banach space and $h: D \longrightarrow \overline U$  a
holomorphic map, where $U$ is a convex domain in some Banach space. Then the image $h(D)$ is entirely contained  in
one holomorphic component $\Gamma_{h(a)}\subset \overline U$ with $a\in D$.
\end{lemma}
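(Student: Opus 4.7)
The plan is to fix a base point $a \in D$, set $\Gamma = \Gamma_{h(a)}$, and show that the set $E = \{x \in D : h(x) \in \Gamma\}$ equals all of $D$. Since $a \in E$ and $D$ is connected, it suffices to show that both $E$ and $D \setminus E$ are open in $D$.

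The key observation used throughout is the following: for any holomorphic map $\gamma : \mathbb{D} \longrightarrow D$, the composition $h \circ \gamma : \mathbb{D} \longrightarrow \overline U$ is holomorphic, so by condition (ii) of Definition \ref{holbdc} combined with the partition property of holomorphic components of $\overline U$, the image $(h \circ \gamma)(\mathbb{D})$ lies entirely in a single holomorphic component. In particular, for any two points $u, v \in \gamma(\mathbb{D})$, the images $h(u)$ and $h(v)$ belong to the same holomorphic component of $\overline U$.

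To see that $E$ is open, fix $x \in E$ and let $r = \mathrm{dist}(x, \partial D) > 0$, so the open ball of radius $r$ about $x$ is contained in $D$. Given $y$ with $0 < \|y - x\| < r$, choose $t \in (1,\, r/\|y-x\|)$ and define the affine map $\gamma(z) = x + t z (y - x)$ for $z \in \mathbb{D}$. Then $\gamma(\mathbb{D})$ is contained in the open ball of radius $t\|y-x\| < r$ about $x$, hence lies in $D$; moreover $\gamma(0) = x$ and $\gamma(1/t) = y$ with $1/t \in \mathbb{D}$. By the key observation, $h(y)$ lies in the same holomorphic component as $h(x) \in \Gamma$, namely $\Gamma$ itself, so $y \in E$. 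The identical construction shows that $D \setminus E$ is open: if $h(x) \notin \Gamma$, then for every $y$ in the ball of radius $r$ about $x$ the image $h(y)$ lies in the same holomorphic component as $h(x)$, which is necessarily distinct from $\Gamma$, so $y \in D \setminus E$.

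Consequently $E$ is a non-empty clopen subset of the connected domain $D$, whence $E = D$ and $h(D) \subset \Gamma_{h(a)}$, as claimed. The argument is essentially a soft formal consequence of the definition of holomorphic component together with the local affine structure of Banach spaces; the only thing to watch is that the affine disc $\gamma$ stays inside $D$, which is guaranteed by the distance bound $r$, and I do not expect any substantive obstacle.
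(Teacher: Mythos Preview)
Your argument is correct. The paper does not actually prove this lemma; it merely records it as ``shown in \cite[Lemma 3.4.11]{book2}'' and moves on, so there is no in-paper proof to compare against. Your clopen argument via affine discs is the standard way to establish such a result: the composition $h\circ\gamma$ with an affine disc $\gamma:\mathbb{D}\to D$ is holomorphic into $\overline U$, condition (ii) of Definition~\ref{holbdc} together with the partition of $\overline U$ into holomorphic components forces $(h\circ\gamma)(\mathbb{D})$ into a single component, and local convexity of $D$ then makes $E=h^{-1}(\Gamma)$ clopen. The only technical point---that the affine disc stays inside $D$---you handle correctly with the choice $t\in(1,\,r/\|y-x\|)$.
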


\section{Horoballs}\label{Horo}

Horoballs, introduced in \cite{horo}, are a crucial tool in our approach to holomorphic dynamics on bounded symmetric domains.
They are convex domains. Given a sequence $(z_k)$ in
a bounded symmetric domain $D$  norm converging to a boundary point $\xi \in \partial D$, the {\it horoball} $H(\xi, s)$ for
 $s>0$ is defined in \cite[Theorem 4.4]{horo} as
\begin{equation}\label{ball0}
H(\xi,s) = \{ x\in D: F (x) <s\}
\end{equation}
where $F: D \longrightarrow [0,\infty)$ is a continuous function given by
$$F (x) = \limsup_{k\rightarrow \infty} \frac{1-\|z_k\|^2}{1-\|g_{-x}(z_k)\|^2} =\limsup_{k\rightarrow \infty} \frac{1-\|z_k\|^2}{1-\|g_{-z_k}(x)\|^2} \qquad (x\in D)$$
in which $g_{-x}: D \longrightarrow D$ is the transvection induced by $x$.
Convexity of $H(\xi,s)$ has been shown in \cite[Theorem 4.4]{horo}.

 \begin{xrem} Actually, the horoball in (\ref{ball0}) is denoted by $H(\xi, \frac{1}{s})$ in \cite{horo}.
 We will make suitable adjustments when making use of the relevant results in \cite{horo}.
\end{xrem}

Evidently, the definition of $H(\xi,s)$ depends on the sequence $(z_k)$. Nevertheless, by choosing a subsequence, we may
assume  in later applications that $(z_k)$ satisfies an additional condition (\ref{sigi}) below. Let
$$z_k = \alpha_{k 1}e_{k1} +\cdots + \alpha_{kr}e_{kr}\qquad
(\|z_k\|=\alpha_{k1} \geq \alpha_{k2} \geq \cdots \alpha_{kr}\geq 0)$$
be a spectral decomposition.  Since $\overline D$ is weakly compact and
$$0<1-\alpha_{k1}^2 \leq 1-\alpha_{ki}^2 \qquad  (i = 1, \ldots, r),$$
we may assume, by choosing a subsequence of $(z_k)$, that the limits
\begin{equation}\label{sigi}
\alpha_i = \lim_k \alpha_{ki}, ~ \sigma_i =\lim_{k\rightarrow \infty} \frac{1-\alpha_{k1}^2}{ 1-\alpha_{ki}^2}
~~{\rm exist, ~ and~}(e_{ki}) ~{\rm weakly~ converges~to~} e_i
\end{equation}
for $i = 1, \ldots, r$. We note that $\alpha_1 =\lim_k \alpha_{k1} =\lim_k \|z_k\|=1= \sigma_1$.

Given a sequence $(z_k)$ converging to $\xi \in \partial D$ and satisfying condition (\ref{sigi}),
 we will see in Theorem \ref{horo} that  the limit
\begin{equation}\label{xi1}
 F_\xi (x) =\lim_{k\rightarrow \infty} \frac{1-\|z_k\|^2}{1-\|g_{-x}(z_k)\|^2} \qquad (x\in D)
 \end{equation}
 exists. In view of this,
 we will henceforth define, in place of (\ref{ball0}),
\begin{equation}\label{ball}
H(\xi,s) = \{ x\in D: F_\xi (x) <s\}
\end{equation}
and call it a {\it horoball defined by the sequence $(z_k)$},
with  defining function $F_\xi$ given in (\ref{xi1}). In this definition, it is understood
that $(z_k)$ satisfies (\ref{sigi}).
We note that
$$H(\xi, s) \subset H(\xi,r) \quad {\rm for}\quad 0<s<r.$$

We will show in Lemma \ref{b5} that $F_\xi(x) >0$ for all $x\in D$.
In fact,
$F_\xi$ is related to the notion of a {\it horofunction} introduced by Gromov \cite{ball,grom} for metric spaces
which are length spaces.
More precisely, $\frac{1}{2} \log F_\xi$ is a horofunction on $D$.
We explain  this connection below.

Let $(M,d)$ be a metric space and fix a base point $b\in M$. For each $z\in M$,
 we define a function $h_z: M \longrightarrow \mathbb{R}$ by
$$h_z(x) = d(x,z) - d(b,z) \qquad (x\in M).$$
Then $h_z$ is a Lipschitz $1$-function, that is,
$$|h_z(x) - h_z(y)| \leq d(x,y) \qquad (x,y\in M).$$

A {\it horofunction} on $M$ is a function $h: M \longrightarrow \mathbb{R}$ of the form
\begin{equation}\label{gro}
h(x) = \lim_{k\rightarrow \infty} h_{z_k}(x) \qquad (x\in M)
\end{equation}
where $(z_k)$ is a sequence in $M$ satisfying $\lim_k d(b, z_k) =\infty$.
We note that $h$ is also a Lipschitz $1$-function.

It may be useful to point out  the fact (although not needed later)
that if $(M,d)$ is locally compact, then one can always extract a pointwise convergent subsequence
$(h_{z_m})$ from a sequence $(h_{z_k})$ with $\lim_k d(b, z_k) =\infty$.
 This can be seen as follows.

Let $Lip_b(M)$ be the real Banach space of real-valued Lipschitz functions $f$ on $M$ satisfying $f(b)=0$, equipped with the norm
 $$\|f\|_{lip} = \sup \left\{ \frac{|f(x)-f(y)|}{d(x,y)} : x\neq y\right\}.$$
 It is well-known that $Lip_b(M)$ has a predual $\mathcal{F}(M)$. 
On the closed unit ball of $Lip_b(M)$, denoted by $Lip_b^1(M)$, the weak* topology,
 the topology $\tau_p$ of pointwise convergence as well as the topology $\tau_0$ of uniform convergence on compact sets
 in $M$ all coincide,
 and $Lip_b^1(M)$ is compact in these topologies (cf. \cite[p. 642]{jv}), which contains the horofunctions on $M$.

 Let $(M,d)$ be locally compact and equip the vector space $C(M)$ of real continuous functions on $M$
 with the topology $\tau_0$ of uniform convergence on compact sets. Then  $Lip_b^1(M)$
is a compact subset of $C(M)$.  The dual $C(M)^*$ of $C(M)$ identifies with
the space of regular Borel measures $\mu$ on $M$ with compact support, in the duality
$$ (f,\mu)\in C(M) \times C(M)^* \mapsto \int_M fd\mu.$$
It follows that  $Lip_b^1(M)$ is weakly compact in $C(M)$
since a net $(f_\alpha)$ $\tau_0$-converges to $f$ in $Lip_b^1(M)$
also converges in the weak topology in the above duality. Hence each sequence
$(f_k)$ in $Lip_b^1(M)$ admits a weakly convergent subsequence $(f_{m})$, which
is also pointwise convergent. In particular, the sequence $(h_{z_k})$ above admits a
pointwise convergent subsequence $(h_{z_m})$.

Let $D$ be a bounded symmetric domain realised as the open unit ball of a JB*-triple $V$. Then $D$ is a metric space
with the Kobayashi distance $\kappa$ as the metric.
We can define horofunctions on $(D, \kappa)$ with the origin $0\in D$
as a base point. In this setting, we show that 
 $\frac{1}{2}\log F_\xi$ is a horofunction on $(D, \kappa)$.

Let $(z_k)$ be a sequence  in $D$,  norm converging to a boundary point $\xi\in \partial D$
and satisfying (\ref{sigi}).  Then
$\lim_k \|z_k\|=1$ and hence
$$\lim_{k\rightarrow \infty} \kappa (0, z_k) =\lim_{k\rightarrow \infty} \tanh^{-1}\|z_k\| = \infty.$$

By definition, we have
 \begin{eqnarray}\label{gh}
&&h_{z_k}(x) = \kappa (x,{z_k})-\kappa (0,{z_k}) = \frac{1}{2}\log \frac{1+ \|g_{-x}({z_k})\|}{1-\|g_{-x}({z_k})\|} -
 \frac{1}{2}\log \frac{1+ \|{z_k}\|}{1-\|{z_k}\|} \nonumber\\
&=&  \frac{1}{2}\log\left[ \left(\frac{1+ \|g_{-x}({z_k})\|}{1+\|{z_k}\|}\right)\left(\frac{1- \|{z_k}\|}{1-\|g_{-x}({z_k})\|}\right)\right]\nonumber\\
&=& \frac{1}{2}\log \left[\left(\frac{1- \|{z_k}\|^2}{1-\|g_{-x}({z_k})\|^2}\right)
\left(\frac{1+\|g_{-x}({z_k})\|}{1+ \|{z_k}\|}\right)^2 \right]\qquad (x\in D)
\end{eqnarray}
where $\lim_k \left(\frac{1+\|g_{-x}({z_k})\|}{1+ \|{z_k}\|}\right)^2  =1$ by
 Lemma \ref{hhh}. Hence
\begin{equation}\label{zm}
 \lim_{m\rightarrow \infty} h_{z_k}(x)=\frac{1}{2} \log \lim_{k\rightarrow \infty} \frac{1- \|{z_k}\|^2}{1-\|g_{-x}({z_k})\|^2}
=\frac{1}{2} \log F_\xi(x) \qquad (x\in D)
\end{equation} by (\ref{xi1}) and $ \frac{1}{2} \log F_\xi$
is a horofunction on $D$. In particular, we have
\begin{equation}\label{lips}
|\log F_\xi (x) - \log F_\xi (y)| \leq 2 \kappa (x,y) \qquad (x,y \in D)
\end{equation}
as horofunctions are $1$-Lipschitz.

From now on and for later reference,
we will rename the horofunction $h$ in (\ref{gro}) a {\it Gromov horofunction}, but instead
and more appropriately,
call $F_\xi$ in (\ref{xi1})  a {\it horofunction}, which defines the horoballs $H(\xi, s)$ for $s>0$.

\begin{example}\label{h}
We note that not all Gromov horofunctions on $(D,\kappa)$ are of the form $\frac{1}{2}\log F_\xi$.
Let $D$ be the open unit ball of a Hilbert space $V$ with inner product $\langle\cdot,\cdot\rangle$,
 and let $(z_k)$  be a sequence in $D$ such that $\kappa(z_k,0) \rightarrow \infty$ as $k \rightarrow \infty$.
 Then we have $\lim_k \|z_k\| =1$, but $(z_k)$ need not be norm convergent to a boundary point in $\partial D$
 if $\dim V = \infty$.

For instance, let $(e_k)$ be an orthonormal sequence in the Hilbert space $\ell_2$ of square summable complex sequences and
  $(\alpha_k)$ a sequence in $(0,1)$ converging to $1$, then the sequence $(\alpha_k e_k)$ in $D$ converges weakly to $0$,
  by Bessel's inequality, whereas $\lim_k \|\alpha_k e_k\|=1$.

  Nevertheless,  weak compactness
  of $\overline D$ implies that $(z_k)$ admits a subsequence $(z_j)$ weakly convergent to $a\in \overline D$, but $\|a\| <1$ can occur
as shown above.

We have, by (\ref{hh}),
\begin{eqnarray*}
& &\lim_{j\rightarrow \infty} h_{z_j}(x) =\lim_{j\rightarrow \infty}
 \frac{1}{2}\log \left[ \frac{|1-\langle x, z_j\rangle|^2}{1-\|x\|^2}
 \left(\frac{1+\|g_{-x}(z_j)\|}{1+ \|z_j\|}\right)^2\right]\\
&= & \frac{1}{2}\log  \frac{|1-\langle x, a\rangle|^2}{1-\|x\|^2} \qquad (x\in D).
\end{eqnarray*}
If, for instance, $(z_k)$ is the sequence $(\alpha_ke_k) \in \ell_2$  given above, then $a=0$ and hence
$$ \lim_{j\rightarrow \infty} h_{z_j}(x) = \frac{1}{2}\log  \left(\frac{1}{1-\|x\|^2}\right)$$
which is a Gromov horofunction.

If, however, $(z_k)$ is norm convergent,  then $\|a\|=1$ and the horofunction $F_a$ is given by
$$ F_a(x)= \lim_{k\rightarrow \infty} \frac{1-\|z_k\|^2}{1-\|g_{-x}(z_k)\|^2}=   \frac{|1-\langle x, a\rangle|^2}{1-\|x\|^2}\qquad (z\in D) $$
from (\ref{hh}).
\end{example}

\begin{lemma}\label{b5}
Let $D$ be a bounded symmetric domain realised as the open unit ball of a JB*-triple $V$.
Then the horofunction $F_\xi(x)$ in (\ref{xi1}) satisfies
\begin{equation}\label{bbdd}
F_\xi(0) =1 \quad {\rm and}\quad \frac{1-\|x\|}{1+\|x\|} \leq F_\xi(x) \leq  \frac{1+\|x\|}{1-\|x\|} \qquad (x\in D).
\end{equation}
\end{lemma}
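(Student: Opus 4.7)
The plan is to exploit two already-established facts: first, the explicit form of the transvections when the argument is the base point, and second, the Lipschitz estimate (\ref{lips}) which was derived from the interpretation of $\frac{1}{2}\log F_\xi$ as a Gromov horofunction on $(D,\kappa)$.

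For the identity $F_\xi(0)=1$, I would simply observe that $g_0$ is the identity map on $D$ (as noted in the paragraph following (\ref{g2})), so $g_{-x}(z_k) = z_k$ when $x=0$. Then every term in the defining sequence of $F_\xi(0)$ in (\ref{xi1}) is $\frac{1-\|z_k\|^2}{1-\|z_k\|^2}=1$, and the limit is $1$.

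For the two-sided bound, I would apply the Lipschitz estimate (\ref{lips}) with $y=0$. Using $F_\xi(0)=1$, this gives
\begin{equation*}
|\log F_\xi(x)| = |\log F_\xi(x) - \log F_\xi(0)| \leq 2\kappa(x,0).
\end{equation*}
Now invoke the formula $\|w\|=\tanh \kappa(0,w)$ from (\ref{alphapho}), which yields $2\kappa(x,0) = 2\tanh^{-1}\|x\| = \log\frac{1+\|x\|}{1-\|x\|}$. Substituting this into the previous inequality and exponentiating gives precisely
\begin{equation*}
\frac{1-\|x\|}{1+\|x\|} \leq F_\xi(x) \leq \frac{1+\|x\|}{1-\|x\|}.
\end{equation*}

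There is no real obstacle here: the substance of the lemma is contained in the earlier identification of $\frac{1}{2}\log F_\xi$ as a horofunction (equation (\ref{zm})) together with the Lipschitz property, and the lemma is just a convenient packaging of the consequences at the base point. The only subtle point is making sure the existence of the limit defining $F_\xi(x)$ is invoked properly, but this is guaranteed by the forthcoming Theorem \ref{horo} (or, for $x=0$, is trivial as shown above).
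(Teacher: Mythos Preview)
Your proof is correct but takes a different route from the paper. The paper argues pointwise for each fixed $z\in D$ that
\[
\frac{1-\|x\|}{1+\|x\|}\leq \frac{1-\|z\|^2}{1-\|g_{-x}(z)\|^2}\leq \frac{1+\|x\|}{1-\|x\|},
\]
citing \cite[Lemma 4.1]{horo} for the upper bound and deriving the lower bound by a direct Bergman-operator computation using (\ref{bbcx}), (\ref{bid2}) and (\ref{bid}) applied to the identity $z=g_x(g_{-x}(z))$. Your argument instead packages both inequalities at once via the metric-space fact that $h_{z_k}$ is Lipschitz~$1$ (equivalently, the triangle inequality for $\kappa$), together with $h_{z_k}(0)=0$ and the identification $2\kappa(x,0)=\log\frac{1+\|x\|}{1-\|x\|}$. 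Your route is shorter and more conceptual, and it also makes transparent that $F_\xi(x)>0$ follows immediately from the uniform bound $|h_{z_k}(x)|\leq\kappa(x,0)$, without needing the Jordan-algebraic machinery. The paper's approach, on the other hand, gives the termwise bound for every $z$ (not just in the limit) and is logically independent of the forward reference to Theorem~\ref{horo} for the existence of $\lim_k$, so it is marginally more self-contained.
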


\begin{proof}
We have
$$ F_\xi (0) =\lim_{k\rightarrow \infty} \frac{1-\|z_k\|^2}{1-\|g_{0}(z_k)\|^2}
 =\lim_{m\rightarrow \infty} \frac{1-\|z_k\|^2}{1-\|z_k\|^2}=1.$$
For the inequalities,
it suffices to prove, for each  $z\in D$,  we have
$$\frac{1-\|x\|}{1+\|x\|} \leq \frac{1-\|z\|^2}{1-\|g_{-x}(z)\|^2} \leq  \frac{1+\|x\|}{1-\|x\|} \qquad (x\in D).$$
The last inequality has already been shown in \cite[Lemma 4.1]{horo}.
We deduce the first inequality using (\ref{bbcx}) and (\ref{bid}) as follows.
\begin{eqnarray*}
&& \frac{1}{1-\|z\|^2} = \frac{1}{1-\|g_x(g_{-x}(z))\|^2 }\\
&=&\|B(g_{-x}(z),g_{-x}(z))^{-1/2}B(g_{-x}(z),-x)B(-x,-x)^{-1/2}\|\\
&\leq & \frac{1}{1-\|g_{-x}(z)\|^2}(1+\|g_{-x}(z)\|\|x\|)^2 (1-\|x\|^2)^{-1}\\
&\leq & \frac{1}{1-\|g_{-x}(z)\|^2}(1+\|x\|)^2 (1-\|x\|^2)^{-1}
\leq \frac{1+\|x\|}{(1-\|g_{-x}(z)\|^2) (1-\|x\|)}.
\end{eqnarray*}
\end{proof}
Evidently, (\ref{bbdd}) implies $F_\xi(x)>0$ for all $x\in D$. It also implies
$\bigcap_{s>0} H(\xi,s) = \emptyset$ for the horoballs $H(\xi,s)$ defined by $F_\xi$  in (\ref{ball}),
since any $x\in \bigcap_{s>0} H(\xi,s)$ would lead to the contradiction that $\|x\|<1$ as well as
$(1-\|x\|)(1+\|x\|)^{-1} \leq F_\xi(x) <s$ for all $s>0$. On the other hand, we have $D = \bigcup_{s>0} H(\xi,s)$
by (\ref{xi1}).

We now establish an explicit formula for $F_\xi$ in (\ref{xi1}) for finite-rank domains $D$.

\begin{theorem}\label{horo} Let $D$ be a bounded symmetric domain of finite rank $r$, realised as the open unit
ball of a JB*-triple $V$. Then the horofunction $F_\xi$  in (\ref{xi1}), defined by   a sequence $(z_k)$ in $D$ converging to a boundary point $\xi \in \partial D$ and satisfying (\ref{sigi}), can be expressed as
\begin{equation}\label{fxi}
F_\xi(x) =\left\|\sum_{1\leq i\leq j\leq p}\rho_i\rho_jB(x,x)^{-1/2}B(x,\xi)P_{ij}\right\|
\qquad (x\in D)
\end{equation}
for some $p\in \{1, \ldots, r\}$ and $\rho_1, \ldots, \rho_p \in [0,1]$,
where  $\rho_1=1$
and  $\xi$ has a spectral decomposition
$$\xi = \alpha_1 e_1 + \alpha_2 e_2 + \cdots \alpha_p e_p \qquad (1=\alpha_1 \geq \alpha_2 \geq \cdots \geq \alpha_p>0)
$$
with joint Peirce projections $P_{ij}: V \longrightarrow V$  induced
by the minimal tripotents $e_1, \ldots, e_{p}$.
\end{theorem}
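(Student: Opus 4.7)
The plan is to convert the horofunction formula (\ref{xi1}) into Bergman-operator form via identity (\ref{bid}), and then extract the leading asymptotics of the factor $B(z_k,z_k)^{-1/2}$ using the joint Peirce decomposition (\ref{eq:Bergman Neg Sq Rt}). Combining (\ref{xi1}) with (\ref{bid}) yields
\begin{equation*}
F_\xi(x)=\lim_{k\to\infty}(1-\alpha_{k1}^2)\bigl\|B(x,x)^{-1/2}B(x,z_k)B(z_k,z_k)^{-1/2}\bigr\|.
\end{equation*}
I absorb the scalar $(1-\alpha_{k1}^2)$ into the last factor and expand by (\ref{eq:Bergman Neg Sq Rt}):
\begin{equation*}
(1-\alpha_{k1}^2)B(z_k,z_k)^{-1/2}=\sum_{0\le i\le j\le r}\Bigl(\tfrac{1-\alpha_{k1}^2}{1-\alpha_{ki}^2}\Bigr)^{1/2}\Bigl(\tfrac{1-\alpha_{k1}^2}{1-\alpha_{kj}^2}\Bigr)^{1/2}P_{ij}(e_{k1},\ldots,e_{kr}).
\end{equation*}

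Setting $\rho_i:=\sqrt{\sigma_i}$, the normalisation $\alpha_1=\sigma_1=1$ gives $\rho_1=1$. For $i\ge 1$ the scalar coefficients above converge by (\ref{sigi}) to $\rho_i\rho_j$; for $i=0$ they factor as $(1-\alpha_{k1}^2)^{1/2}\bigl(\tfrac{1-\alpha_{k1}^2}{1-\alpha_{kj}^2}\bigr)^{1/2}\to 0$ since $\alpha_{k1}\to 1$. Moreover Lemma \ref{zmm} forces $\alpha_i=0$, and hence $\sigma_i=\rho_i=0$, for every $i>p$. Thus only the indices $1\le i\le j\le p$ contribute nontrivially. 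Since each joint Peirce projection is contractive, the terms with vanishing coefficients drop out in operator norm, and I am left to establish
\begin{equation*}
(1-\alpha_{k1}^2)B(z_k,z_k)^{-1/2}\longrightarrow\sum_{1\le i\le j\le p}\rho_i\rho_j\,P_{ij}(e_1,\ldots,e_p)\qquad(k\to\infty)
\end{equation*}
in operator norm, which amounts to operator-norm convergence of the surviving Peirce projections.

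For this last convergence I invoke (\ref{lemma 2.1}) with $N=\{1,\ldots,p\}$ to rewrite $P_{ij}(e_{k1},\ldots,e_{kr})=P_{ij}(e_{k1},\ldots,e_{kp})$ whenever $1\le i,j\le p$, eliminating any dependence on the tripotents $e_{k,p+1},\ldots,e_{kr}$ which converge only weakly. Lemma \ref{zmm}(i) supplies norm convergence $e_{ki}\to e_i$ for $i\le p$, and since each joint Peirce projection can be written as a polynomial in the box and quadratic operators of its defining tripotents, it is norm-continuous in those tripotents, giving the required convergence. Continuity of the map $y\mapsto B(x,y)$ then gives $B(x,z_k)\to B(x,\xi)$ in operator norm, and passing the limit through the outer norm yields (\ref{fxi}).

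The principal obstacle is the limit of the joint Peirce projections, because only the first $p$ tripotents $e_{ki}$ converge in norm while those with index $>p$ converge merely weakly. The key trick is that (\ref{lemma 2.1}) lets me discard the non-convergent tripotents from the projections whose coefficients $\rho_i\rho_j$ survive, whereas projections whose coefficients vanish stay uniformly bounded by contractivity and so disappear automatically. Without this separation, one would be forced to reason about limits of Peirce projections defined by weakly convergent tripotents, which is considerably more delicate.
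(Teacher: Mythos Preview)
Your proof is correct and follows essentially the same route as the paper: rewrite via identity (\ref{bid}), expand $(1-\alpha_{k1}^2)B(z_k,z_k)^{-1/2}$ with (\ref{eq:Bergman Neg Sq Rt}), kill the $i=0$ and $j>p$ terms by the vanishing coefficients together with contractivity of $P_{ij}$, and then use (\ref{lemma 2.1}) to drop the weakly-convergent tripotents from the surviving projections so that norm convergence $e_{ki}\to e_i$ for $i\le p$ (Lemma \ref{zmm}) pushes through. The only cosmetic difference is that the paper cites \cite[Remark 5.9]{horo} for $P_{ij}(e_{k1},\ldots,e_{kp})\to P_{ij}(e_1,\ldots,e_p)$ in operator norm, whereas you argue it directly from the polynomial expression of the Peirce projections in the triple product; both are valid.
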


\begin{proof} Let $F_\xi$ be defined by  $(z_k)$ satisfying (\ref{sigi}), with
a spectral decomposition
$$z_k = \alpha_{k1}e_{k1} + \cdots + \alpha_{kr}e_{kr}
 \qquad (\|z_k\|=\alpha_{k1} \geq \cdots \geq \alpha_{kr} \geq 0).$$
By Lemma \ref{zmm},
 there exists $p\in \{1, \ldots, r\}$ such that
\begin{enumerate}
\item[(i)] $\alpha_i >0$ and $(e_{ki})_k$ norm converges to $e_i$ for each $i = 1, \ldots, p$,
\item[(ii)] $\alpha_i=0$ for $i >p$,
\item[(iii)]$e_1, \ldots, e_p$ are mutually orthogonal minimal tripotents
\end{enumerate}
and
$$\xi = \lim_k z_k = \alpha_1e_1 +\cdots +\alpha_p e_p \qquad (\alpha_1=1).$$

By (\ref{bid}), we have
$$ \frac{1-\|z_k\|^2}{1-\|g_{-x}(z_k)\|^2}= (1-\|z_k\|^2)\|B(x,x)^{-1/2}B(x,z_k)B(z_k,z_k)^{-1/2}\|$$
where
$$B(z_k,z_k)^{-1/2} = \sum_{0\leq i\leq j\leq r} (1-\alpha_{ki}^2)^{-1/2}(1-\alpha_{kj}^2)^{-1/2}P_{ij}^k \qquad (\alpha_{_{k0}}=0)$$
and $P_{ij}^k$ is the joint Peirce projection $P_{ij}(e_{k1}, \ldots, e_{kr})$.

For $1\leq i\leq j\leq p$, we have $P_{ij}(e_{k1}, \ldots, e_{kp}) = P_{ij}^k$ by (\ref{lemma 2.1}).
By \cite[Remark 5.9]{horo}, we have the following norm
convergence
$$\lim_{k\rightarrow \infty} P^k_{ij} = P_{ij}(e_1, \ldots, e_p).$$
Since $(z_k)$ satisfies (\ref{sigi}), the following limit
\begin{equation}\label{lambda}
\rho_i = \lim_k \sqrt{\frac{1-\alpha_{k1}^2}{1-\alpha_{ki}^2}} = \sqrt \sigma_i \in [0,1]
\end{equation}
exists for $i=0, 1, \ldots, r$, where $\rho_1 =1$ and $\rho_0=\rho_i =0$ for  $\alpha_i <1$.
In particular, $\|P_{ij}^k\|\leq 1$ implies
$$\lim_{k\rightarrow \infty}\left\|\sqrt{\frac{1-\alpha_{k1}^2}{1-\alpha_{ki}^2}}\sqrt{\frac{1-\alpha_{k1}^2}{1-\alpha_{kj}^2}}
P_{ij}^k\right\|\leq \rho_i\rho_j =0$$
for $p<i\leq j\leq r$, and for $i=0$.
It follows that
\begin{eqnarray*}
\lim_{k\rightarrow \infty} (1-\|z_k\|^2)B(z_k, z_k)^{-1/2}& = &
\lim_{k\rightarrow \infty} \sum_{0\leq i\leq j\leq r}
\sqrt{\frac{1-\alpha_{k1}^2}{1-\alpha_{ki}^2}}
\sqrt{\frac{1-\alpha_{k1}^2}{1-\alpha_{kj}^2}} \,P_{ij}^k\\
&= & \sum_{1\leq i\leq j\leq p}
\rho_i \rho_j P_{ij}(e_1, \ldots, e_p).
\end{eqnarray*}
Therefore we have
\begin{eqnarray*}
\lim_{k\rightarrow \infty} \frac{1-\|z_k\|^2}{1-\|g_{-x}(z_k)\|^2}  &=&
\lim_{k\rightarrow \infty} (1-\|z_k\|^2)\|B(x,x)^{-1/2}B(x,z_k)B(z_k,z_k)^{-1/2}\| \\
&= & \left\|\sum_{1\leq i\leq j\leq p}\rho_i\rho_jB(x,x)^{-1/2}B(x,\xi)P_{ij}(e_1, \ldots, e_p)\right\| \quad (x\in D).
\end{eqnarray*}
and hence $$F_\xi(x) = \left\|\sum_{1\leq i\leq j\leq p}\rho_i\rho_jB(x,x)^{-1/2}B(x,\xi)P_{ij}\right\|
\quad (P_{ij}=P_{ij}(e_1, \ldots, e_p)). $$
 \end{proof}

\begin{remark}\label{reverse}
Since $\|g_{-x}(z_k)\| =\|g_{-z_k}(x)\|$, the preceding  horofunction $F_\xi$ can also be
expressed as
\begin{eqnarray*}
F_\xi(x) &  =&
\lim_{k\rightarrow \infty} (1-\|z_k\|^2)\|B(z_k,z_k)^{-1/2}B(z_k,x)B(x,x)^{-1/2}\| \\
&= & \left\|\sum_{1\leq i\leq j\leq p}\rho_i\rho_jP_{ij}B(\xi, x)B(x,x)^{-1/2}\right\|.
\end{eqnarray*}
We will provide in (\ref{>0}) a sharper expression of $F_\xi$ in which the coefficients
$\rho_i$ are all positive.
\end{remark}


We note that the coefficients $\rho_i$ in the formula (\ref{fxi}) for $F_\xi$
depend on the sequence $(\alpha_{ki})$. Nevertheless,  we can also compute
$F_\xi$ via a special sequence $(y_k)$ given below.

\begin{corollary}\label{anotherseq} The function $F_\xi$ in Theorem \ref{horo}, defined by the sequence
 $$z_k = \alpha_{k1}e_{k1} + \cdots + \alpha_{kr}e_{kr}\qquad (\|z_k\|=\alpha_{k1} \geq \alpha_{k2} \geq \cdots
\geq \alpha_{kr} \geq 0),$$
with limit $\xi = \alpha_1 e_1 + \alpha_2 e_2 + \cdots \alpha_p e_p\,$,
can be computed by the limit
$$F_\xi (x) = \lim_{k\rightarrow \infty} \frac{1-\|y_k\|^2}{1-\|g_{-x}(y_k)\|^2}\qquad (x\in D)$$
where the sequence  $(y_k)$ is chosen to be
$$y_k = \alpha_{k1}e_1 + \alpha_{k2}e_2 + \cdots + \alpha_{kp}e_{p}.$$
\end{corollary}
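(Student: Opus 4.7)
The plan is to verify that the sequence $(y_k)$ satisfies the same hypotheses as $(z_k)$ and then to recompute the limit, observing that the calculation actually simplifies because the minimal tripotents $e_1,\dots,e_p$ are now fixed (independent of $k$), so the joint Peirce projections carry no $k$-dependence at all.

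First I would check convergence and spectral data. Since $e_1,\dots,e_p$ are mutually orthogonal minimal tripotents, $y_k=\alpha_{k1}e_1+\cdots+\alpha_{kp}e_p$ is already a spectral decomposition of $y_k$ with spectral values $\alpha_{k1}\geq\cdots\geq\alpha_{kp}\geq 0$, and in particular $\|y_k\|=\alpha_{k1}=\|z_k\|$. By hypothesis the scalars $\alpha_{ki}$ converge to $\alpha_i$, so $y_k\to \alpha_1e_1+\cdots+\alpha_pe_p=\xi$ in norm. The ratios $(1-\alpha_{k1}^2)/(1-\alpha_{ki}^2)$ are literally the same as for $(z_k)$, so $(y_k)$ satisfies condition (\ref{sigi}) with the same scalars $\rho_i=\sqrt{\sigma_i}$ ($i=1,\dots,p$) as in (\ref{lambda}); in particular $\rho_1=1$.

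Second, I would compute $B(y_k,y_k)^{-1/2}$ and take the limit of $(1-\|y_k\|^2)B(y_k,y_k)^{-1/2}$. By formula (\ref{eq:Bergman Neg Sq Rt}) applied to the mutually orthogonal tripotents $e_1,\dots,e_p$,
\[
B(y_k,y_k)^{-1/2}=\sum_{0\leq i\leq j\leq p}(1-\alpha_{ki}^2)^{-1/2}(1-\alpha_{kj}^2)^{-1/2}P_{ij}(e_1,\dots,e_p),
\]
with the convention $\alpha_{k0}=0$. Since the projections $P_{ij}(e_1,\dots,e_p)$ do not depend on $k$, multiplying by $1-\|y_k\|^2=1-\alpha_{k1}^2$ and passing to the limit gives
\[
\lim_{k\to\infty}(1-\|y_k\|^2)B(y_k,y_k)^{-1/2}=\sum_{1\leq i\leq j\leq p}\rho_i\rho_j\,P_{ij}(e_1,\dots,e_p),
\]
the terms with $i=0$ or with $\alpha_i<1$ dropping out exactly as in the proof of Theorem \ref{horo}.

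Third, applying (\ref{bid}) and continuity of the Bergman operator in its second slot, together with $y_k\to\xi$ in norm, yields
\[
\lim_{k\to\infty}\frac{1-\|y_k\|^2}{1-\|g_{-x}(y_k)\|^2}
=\lim_{k\to\infty}(1-\|y_k\|^2)\bigl\|B(x,x)^{-1/2}B(x,y_k)B(y_k,y_k)^{-1/2}\bigr\|
\]
\[
=\Bigl\|\sum_{1\leq i\leq j\leq p}\rho_i\rho_j\,B(x,x)^{-1/2}B(x,\xi)P_{ij}(e_1,\dots,e_p)\Bigr\|=F_\xi(x),
\]
the last equality being formula (\ref{fxi}) of Theorem \ref{horo}. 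This gives the claim.

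Since no Peirce-projection limit has to be taken this time (the tripotents are fixed from the start), there is no real obstacle; the only point to be careful about is the bookkeeping of which indices $i$ survive in the limit, and that is settled precisely as in the proof of Theorem \ref{horo} via (\ref{lambda}) and the fact that $\rho_i=0$ whenever $\alpha_i<1$.
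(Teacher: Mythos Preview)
Your proof is correct and follows essentially the same approach as the paper's: observe that $(y_k)$ norm converges to $\xi$, compute $(1-\|y_k\|^2)B(y_k,y_k)^{-1/2}$ via (\ref{eq:Bergman Neg Sq Rt}) with the fixed Peirce projections $P_{ij}(e_1,\dots,e_p)$, take the limit to obtain $\sum_{1\leq i\leq j\leq p}\rho_i\rho_jP_{ij}$, and conclude by (\ref{bid}) and (\ref{fxi}). Your additional remarks on $\|y_k\|=\alpha_{k1}$ and on condition~(\ref{sigi}) being inherited verbatim from $(z_k)$ are helpful clarifications, but the argument is otherwise the paper's own.
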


\begin{proof}
Observe that the sequence $(y_k)$ norm converges to $\xi$ and
\begin{eqnarray*}
&&    \lim_{k\rightarrow \infty} (1-\|y_k\|^2)B(y_k, y_k)^{-1/2}\\ &= &
\lim_{k\rightarrow \infty} \sum_{0\leq i\leq j\leq p}
\sqrt{\frac{1-\alpha_{k1}^2}{1-\alpha_{ki}^2}}
\sqrt{\frac{1-\alpha_{k1}^2}{1-\alpha_{kj}^2}} \,P_{ij} \qquad (\alpha_{k0}=0)\\
&=&  \sum_{0\leq i\leq j\leq p}
\rho_i \rho_j P_{ij}
=  \sum_{1\leq i\leq j\leq p}
\rho_i \rho_j P_{ij}
\end{eqnarray*}
where $\rho_i = \lim_k \sqrt{\frac{1-\alpha_{k1}^2}{1-\alpha_{ki}^2}}$ for $i=0, 1,\ldots, p$ and
$\rho_0=0$.

Hence
\begin{eqnarray*}
&& \lim_{k\rightarrow \infty}  \frac{1- \|y_k\|^2}{1-\|g_{-x}(y_k)\|^2}
= \lim_{k\rightarrow \infty}   (1-\|y_k\|^2)\|B(x,x)^{-1/2}B(x,y_k)B(y_k,y_k)^{-1/2}\| \\
&=& \left\|\sum_{1\leq i\leq j\leq p}\rho_i\rho_jB(x,x)^{-1/2}B(x,\xi)P_{ij}\right\|
= F_\xi(x).
\end{eqnarray*}
\end{proof}

\begin{example}\label{xilambda} Let $F_\xi$ be the horofunction in Corollary \ref{anotherseq}, where
$\xi = \alpha_1e_1+ \cdots +\alpha_p e_{p}$ and
$$F_\xi (x) = \lim_{k\rightarrow \infty} \frac{1-\|y_k\|^2}{1-\|g_{-x}(y_k)\|^2}\qquad (x\in D)$$
with
$$y_k = \alpha_{k1}e_1 + \alpha_{k2}e_2 + \cdots + \alpha_{kp}e_{p}.$$
Then for each $t \in (0,1)$, we have
$$F_\xi(t \xi) = \frac{1-t}{1+t}.$$
Indeed, we have
$$g_{-t \xi}(y_k) =\psi_{-t\alpha_1}(\alpha_{k1}) e_1 + \cdots + \psi_{-t\alpha_p}(\alpha_{kp}) e_p$$
by Lemma \ref{inc}, and for $j=1, \ldots, p$, we have $\alpha_{kj} \geq t\geq t\alpha_j$ from some $k$ onwards
since $\alpha_{kj} \rightarrow 1$ as $k \rightarrow \infty$. Hence
$$\|g_{-t \xi}(y_k)\| = \max \{\psi_{-t\alpha_1}(\alpha_{k1}), \ldots,  \psi_{-t\alpha_p}(\alpha_{kp})\}
=\psi_{-t\alpha_1}(\alpha_{k1})= \psi_{-t}(\alpha_{k1})$$ and
\begin{eqnarray*}
 F_\xi({t \xi}) &=&\lim_{k\rightarrow\infty} \frac{1- \alpha_{k1}^2}{1-\|g_{-t \xi}(y_k)\|^2}
=  \lim_{k\rightarrow\infty} \frac{1- \alpha_{k1}^2}{1- |\psi_{-t}(\alpha_{k1})|^2}\\
&=& \lim_{ k \rightarrow \infty} \frac{(1- \alpha_{k1}^2)|1-t \alpha_{ k1}|^2}
{(1-t^2)(1-\alpha_{ k1}^2)}=\frac{1-t}{1+t}.
\end{eqnarray*}
\end{example}

For later application, we need a more explicit description of the horoballs $H(\xi,s)$.
 By \cite[Theorem 5.11]{horo}, the horoballs $H(\xi,s)$,
with defining horofunction $F_\xi$  in Theorem \ref{horo} and
\begin{equation}\label{47}
\xi = \lim_{k\rightarrow\infty} z_k = \alpha_1 e_1 + \alpha_2 e_2 + \cdots \alpha_p e_p \in \partial D,
\end{equation}
 are given by
\begin{equation}\label{hbr}
H(\xi, s)= \sum_{j=1}^p \frac{\sigma_j}{\sigma_j+s} e_j +
B\left (\sum_{j=1}^p \sqrt{\frac{\sigma_j}{\sigma_j+s}} e_j, \,\sum_{j=1}^p \sqrt{\frac{\sigma_j}{\sigma_j +s}} e_j
\right)^{1/2}(D)
\end{equation}
where $\sigma_j =\rho_j^2\in [0,1]$ \cite[(5.6)]{cr} and $\rho_j$ is given in (\ref{fxi})
for $j=1, \ldots,p$.
The second term on the right-hand side of (\ref{hbr}) is the Bergman operator.
In particular, $H(\xi,s) \neq \emptyset$ and
we note from (\ref{lambda}) that
$$\sigma_1 =1 \geq \sigma_2 \geq \cdots \geq \sigma_p\geq 0$$
and $\sigma_j=0$ if $\alpha_j <1$.

Evidently, the horoball $H(\xi,s)$ in (\ref{hbr})
only depends on the {\it positive} coefficients $\sigma_j$.
Let
$$q= \max \{j\in \{1, \ldots, p\}: \sigma_j >0\}.$$
Note that $q \geq 1$ and $\sigma_j >0$ implies $\alpha_j =1$ by (\ref{lambda}).

By Corollary \ref{anotherseq}, $F_\xi$ can be computed by the sequence
$$y_k=\alpha_{k1} e_1 + \cdots + \alpha_{kp}e_p.$$
Let $c= e_1 + \cdots +e_q$ and
let $F_c$ be the horofunction
defined by the truncated sequence
\begin{equation}\label{trunc}
z_k'=\alpha_{k1} e_1 + \cdots + \alpha_{kq}e_q
\end{equation}
which converges to the tripotent $c$.
Then the horoballs
$$H(c, s)= \{x\in D: F_c(x)<s\} \qquad (s>0)$$
 defined by the sequence
$( z_k')$ is identical with $H(\xi,s)$.

Indeed, by  \cite[Theorem 5.11]{horo} again, we have,
with $\sigma_j = \lim_k \frac{1- \alpha_{k1}^2}{1-\alpha_{kj}^2}$ for $j=1, \ldots, q$,
 \begin{eqnarray}\label{e=xi}
H(c,s) &=&  \sum_{j=1}^q \frac{\sigma_j}{\sigma_j+s} e_j +
B\left (\sum_{j=1}^q \sqrt{\frac{\sigma_j}{\sigma_j+s}} e_j, \,\sum_{j=1}^q \sqrt{\frac{\sigma_j}{\sigma_j +s}} e_j
\right)^{1/2}(D)\nonumber\\
&=& H(\xi,s)
\end{eqnarray}
 for all $s>0$  since $\sigma_j =0$ for $j= q+1, \ldots, p$ in (\ref{hbr}). It follows that
\begin{equation}\label{>0}
 F_\xi (x) =F_c (x)  = \left\|\sum_{1\leq i\leq j\leq q}\rho_i\rho_jB(x,x)^{-1/2}B(x,c)P_{ij}\right\|
\end{equation}
where  $\rho_j = \sqrt\sigma_j>0$ for $j=1, \ldots,q$ and
$P_{ij} = P_{ij}(e_1, \ldots, e_q)$ is the Peirce projection of the minimal tripotents $e_1, \ldots, e_q$
satisfying
\begin{equation*}
P_{ij}(e_1, \ldots, e_q) = P_{ij}(e_1, \ldots, e_q, \ldots, e_p) \qquad (1\leq i\leq j\leq q).
\end{equation*}
by (\ref{lemma 2.1}).

\begin{definition}
In (\ref{e=xi}), we denote
\begin{equation}\label{center}
c_s =  \frac{1}{1+s} e_1 +  \frac{\sigma_2}{\sigma_2+ s} e_2 + \cdots +  \frac{\sigma_q}{\sigma_q+s} e_q
\in D
\end{equation}
and call it the {\it centre} of the horoball $H(c,s)=H(\xi,s)$. The tripotent
$$c=e_1 + \cdots + e_q = \lim_{s\rightarrow 0} c_s \in \partial D$$ is called
the {\it horocentre} of $H(c,s)$. We call $s$ the {\it hororadius} of $H(c,s)$.
\end{definition}

We can view  in some way the horoball $H(\xi,s)=H(c,s)$
 as some kind of  {\it `open ball'} contained in D. Indeed, it {\it is}
a proper open disc in the case $D=\mathbb{D}$.
To see this, let us denote for short
\begin{equation}\label{bberg}
B_s= B\left (\sum_{j=1}^q \sqrt{\frac{\sigma_j}{\sigma_j+s}} e_j, \,\sum_{j=1}^q \sqrt{\frac{\sigma_j}{\sigma_j +s}} e_j
\right).
\end{equation}

\begin{proposition} \label{hbrs}
Let $s>0$ and $H(\xi, s)$ be the horoball in a finite-rank domain $D$ defined by a sequence $(z_k)$
converging to the boundary point $\xi$, with
spectral decomposition $$\xi = \alpha_1 e_1 + \alpha_2 e_2 + \cdots + \alpha_p e_p \qquad (\alpha_1=1).$$
Let $c_s$ be the centre of $H(\xi,s)$ and for $\rho>0$, let
$S\left(c_s, \rho\right)= \left\{x\in D: \|x-c_s\| < \rho \right\}$
be the open ball with center $c_s$ and radius $\rho$.
 Then we have
$$ S\left(c_s, \frac{s}{1+s}\right) \subset H(\xi,s) = \{x\in D: \|B_s^{-1/2}(x-c_s)\| <1\}\subset S(c_s, \|B_s^{1/2}\|).$$
If $D=\mathbb{D}$, then
\begin{equation}\label{unitdisc}
H(\xi,s) = S\left(c_s, \frac{s}{1+s}\right) = \frac{1}{1+s} \xi + \frac{s}{1+s}\mathbb{D}.
\end{equation}
\end{proposition}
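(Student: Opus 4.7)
The plan is to work from the explicit parametrisation of the horoball given in (\ref{e=xi}), namely
$$H(\xi,s) = c_s + B_s^{1/2}(D),$$
with $c_s$ and $B_s$ as in (\ref{center}) and (\ref{bberg}). Since the coefficients $\sigma_j/(\sigma_j+s)$ all lie in $[0,1)$ (they equal $0$ when $\sigma_j = 0$ and are in $(0,1)$ when $\sigma_j > 0$), the element $z_s := \sum_{j=1}^q \sqrt{\sigma_j/(\sigma_j+s)}\,e_j$ lies in $D$, so $B_s = B(z_s,z_s)$ is invertible and $B_s^{1/2}$ is a bounded invertible linear map on $V$.

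First I would derive the middle equality. For $x \in V$, writing $x - c_s = B_s^{1/2}(y)$ with $y = B_s^{-1/2}(x-c_s)$, we get $x \in c_s + B_s^{1/2}(D)$ iff $y \in D$ iff $\|B_s^{-1/2}(x-c_s)\| < 1$. The containment $H(\xi,s) \subset D$ ensures the qualifier $x \in D$ is automatic on the left-hand side, giving
$$H(\xi,s) = \{x \in D : \|B_s^{-1/2}(x - c_s)\| < 1\}.$$
The right-hand inclusion $H(\xi,s) \subset S(c_s, \|B_s^{1/2}\|)$ is then immediate: if $x = c_s + B_s^{1/2}(y)$ with $\|y\| < 1$, then $\|x - c_s\| \le \|B_s^{1/2}\|\,\|y\| < \|B_s^{1/2}\|$.

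The main work is the inner inclusion $S(c_s, s/(1+s)) \subset H(\xi,s)$, where the crux is to compute $\|B_s^{-1/2}\|$. Since the $e_j$ are mutually orthogonal minimal tripotents, iterating (\ref{osum}) gives
$$\|z_s\| = \max_{1 \le j \le q} \sqrt{\sigma_j/(\sigma_j+s)} = \sqrt{1/(1+s)},$$
using that $t \mapsto t/(t+s)$ is increasing and $\sigma_1 = 1$ is the largest of the $\sigma_j$. Hence $1 - \|z_s\|^2 = s/(1+s)$, and (\ref{bid2}) yields $\|B_s^{-1/2}\| = (1+s)/s$. For $x$ with $\|x - c_s\| < s/(1+s)$, this gives $\|B_s^{-1/2}(x - c_s)\| < 1$. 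It remains to check $x \in D$; but by the same orthogonality argument $\|c_s\| = 1/(1+s)$, so $\|x\| \le \|x - c_s\| + \|c_s\| < s/(1+s) + 1/(1+s) = 1$, so $x \in D$ and hence $x \in H(\xi,s)$.

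For the disc case $D = \mathbb{D}$, the rank is $1$, so $p = q = 1$, $\sigma_1 = 1$ and $e_1 = \xi$. Thus $c_s = \xi/(1+s)$ and $z_s = \sqrt{1/(1+s)}\,\xi$, and (\ref{bz}) gives $B_s^{1/2} w = (1 - |z_s|^2)w = (s/(1+s))\,w$. Therefore $B_s^{1/2}(\mathbb{D}) = (s/(1+s))\mathbb{D}$ and
$$H(\xi,s) = \tfrac{1}{1+s}\xi + \tfrac{s}{1+s}\mathbb{D} = S\bigl(c_s, \tfrac{s}{1+s}\bigr),$$
which is (\ref{unitdisc}). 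There is no substantive obstacle here; the only non-obvious step is the norm computation $\|B_s^{-1/2}\| = (1+s)/s$, and that is handled cleanly by reducing to (\ref{bid2}) via the orthogonal spectral form of $z_s$.
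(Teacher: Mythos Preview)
Your proof is correct and follows essentially the same route as the paper's: both derive the middle equality directly from the parametrisation $H(\xi,s)=c_s+B_s^{1/2}(D)$, both get the outer inclusion trivially, and both obtain the inner inclusion by computing $\|B_s^{-1/2}\|=(1+s)/s$ via (\ref{bid2}) and (\ref{osum}). Your extra verification that $\|x\|<1$ is not strictly needed since $S(c_s,\rho)$ is already defined as a subset of $D$, but it is a harmless (and informative) addition.
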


\begin{proof}
By (\ref{e=xi}), each $x\in H(\xi,s)$ is of the form $x= c_s + B_s^{1/2}(y)$ for some $y\in D$.
Hence $\|B_s^{-1/2}(x-c_s)\| = \|y\| <1$ and $\|x-c_s\|\leq \|B^{1/2}_s(y)\| <\|B_s^{1/2}\|$, that is,
$x \in  S(c_s, \|B_s^{1/2}\|).$

Conversely, given $\|B_s^{-1/2}(x-c_s)\| <1$, then we have
$$x = c_s + B_s^{1/2}(B_s^{-1/2}(x-c_s)) \in H(\xi,s).$$

For each $x\in S\left(c_s, \frac{s}{1+s}\right)$, we have
$\|x-c_s\| <  \frac{s}{1+s}$ and
$$\|B_s^{-1/2}(x-c_s)\|\leq \|B_s^{-1/2}\| \|x-c_s\| < \frac{1}{1-\left(\sqrt{\frac{1}{1+s}}\right)^2}\left(\frac{s}{1+s}\right) =1$$
by (\ref{bid2}) and (\ref{osum}). Hence $x\in H(\xi,s)$.

If $D=\mathbb{D}$, then $\xi$ reduces to $e_1\in \partial \mathbb{D}$ and $c_s = \frac{1}{1+s}e_1$.
By (\ref{bz}),
$$B_s^{1/2}(y) = B\left(\sqrt{\frac{1}{1+s}} e_1, \sqrt{\frac{1}{1+s}} e_1\right)^{1/2}(y) = \frac{s}{1+s}y$$
which yields (\ref{unitdisc}).

\end{proof}

 We note that, for each $w\in \overline D$, we can write
 \begin{eqnarray}\label{419}
&& c_s + B_s^{1/2}(w) =\frac{1}{1+s} e_1 + \cdots +  \frac{\sigma_q}{\sigma_q +s}e_q + \sum_{0\leq i\leq j \leq q}
\sqrt{\frac{s}{\sigma_i +s}}\sqrt{\frac{s}{\sigma_j +s}}P_{ij}(w)\nonumber\\
 &=&\frac{1+st_1}{1+s} e_1 + \cdots +  \frac{\sigma_q+st_q}{\sigma_q +s}e_q + \sum_{0\leq i<j \leq q}
\sqrt{\frac{s}{\sigma_i +s}}\sqrt{\frac{s}{\sigma_j +s}}P_{ij}(w)
\end{eqnarray}
 where $P_{ii}(w) = P_2(e_i)(w) = t_i w$  and $|t_i|\leq 1$ for $i= 1, \ldots, q$.

\begin{example}\label{2disc}
In the special case of the unit disc $\mathbb{D}$, we call the horoball in (\ref{unitdisc}) a {\it horodisc} and denote
it by $\mathbb{H}(\xi,s)=H(e_1,s)$,
of which the horocentre is $e_1\in \partial \mathbb{D}$.
A minimal tripotent $e$ in the bidisc $\mathbb{D}^2$ is of the form $e=(e^{i\alpha},0)$ or
$e=(0, e^{i\beta})$ for some $\alpha, \beta \in \mathbb{R}$. Given two orthogonal minimal tripotents say,
$e_1=(e^{i\alpha},0)$ and $e_2=(0, e^{i\beta})$, a horoball $H(e_1+e_2, s)$
in (\ref{e=xi}) defined by a sequence in $\mathbb{D}^2$ is a product of two horodiscs:
$$H(e_1+e_2, s) = \frac{1}{1+s}e_1 + \frac{\sigma}{\sigma +s}e_2 + B_s^{1/2}(\mathbb{D}^2)=
\mathbb{H}(e^{i\alpha},s) \times \mathbb{H}(e^{i\beta}, s/\sigma) \quad (\sigma >0).$$
 This can be seen as follows using the coordinatewise Jordan triple
product in $\mathbb{C}^2$ defined in Example \ref{ellinfty}, with  (\ref{bz}).
\begin{eqnarray*}
B_s^{1/2}(z,w) &=& B\left(\left (\sqrt{\frac{1}{1+s}}e^{i\alpha} , \sqrt{\frac{\sigma}{\sigma+s}} e^{i\beta}\right),
\left(\sqrt{\frac{1}{1+s}}e^{i\alpha} , \sqrt{\frac{\sigma }{\sigma+s}}e^{i\beta}\right)\right)^{1/2}(z,w)\\
 &=& \left(\left(1-\left|\sqrt{\frac{1}{1+s}} e^{i\alpha}\right|^2\right) z,\,
 \left(1-\left|\sqrt{\frac{\sigma}{\sigma+s}} e^{i\beta}\right|^2\right)w\right)\\
& =&\left( \frac{s}{1+s} z,  \frac{s/\sigma}{1+s/\sigma}w\right) \qquad (z,w\in \mathbb{D}^2).
\end{eqnarray*}
\end{example}

\begin{xrem} Generalisations of the horodisc (\ref{unitdisc}) in $\mathbb{D}$
 to higher dimensions have been given by a number of authors, often called {\it horospheres}
(e.g. \cite{aba,ab,yang}).
\end{xrem}

We see from (\ref{hbr}) that the closure $\overline H(\xi,s)$ is give by
\begin{equation}\label{clos}
\overline H (\xi,s) = c_s + B_s^{1/2}(\overline D) = \{x\in \overline D : \|B_s^{-1/2}(x-c_s)\|\leq 1\}.
\end{equation}
In particular, $\xi\in \overline H(\xi,s)$ for all $s>0$ since
\begin{eqnarray*}
\|B_s^{-1/2}( \xi-c_s) \|&=&  \left\|\sum_{j=1}^p  \left(1- \left(\sqrt{\frac{\sigma_j}{\sigma_j+s}}\right)^2\right)^{-1}
 \left(\alpha_j - \frac{\sigma_j}{\sigma_j+s}\right)e_j \right\|\\
 &=& \left\|\sum_{j=1}^p\left(\frac{\sigma_j +s}{s}\right)  \left(\alpha_j - \frac{\sigma_j}{\sigma_j+s}\right)e_j\right\|
 =\|e_1 + \cdots\|= 1.
 \end{eqnarray*}

\begin{remark}\label{513}It has been shown in \cite[Theorem 5.13]{horo} that
$$\overline H(\xi,s) \cap D = \{x\in D: F_\xi(s) \leq s\}.$$
We will call $\overline H(\xi,s)$  a {\it closed horoball}.

\end{remark}
We now show that the intersection of all closed horoballs in $D$ forms
the closure of a holomorphic boundary component
of $D$, which is the final key to the Denjoy-Wolff theorem.

\begin{theorem}\label{cr1}
Let $D$ be a bounded symmetric domain of rank $r< \infty$, realised as the open
unit ball of a JB*-triple $V$. Let $(z_k)$ be a sequence in $D$ norm convergent to $\xi\in \partial D$,
which defines the horoballs
$$H(\xi, s) = \{x\in D: F_\xi(x)<s\}  \qquad (s>0)$$
with horofunction
$F_\xi$.
Then the intersection $\bigcap_{s>0}\overline H (\xi,s)$
is the closure of a holomorphic boundary component of $D$. More explicitly,
$$\bigcap_{s>0}\overline H (\xi,s)= c + (V_0(c) \cap \overline D) = \overline \Gamma_c$$
where $c\in \partial D$ is the horocentre of $H(\xi,s)$  and $V_0(c)$ is the Peirce $0$-space of $c$.
\end{theorem}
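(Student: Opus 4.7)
The plan is to combine the explicit parametrization of the closed horoball given in (\ref{clos}), namely $\overline H(\xi,s) = c_s + B_s^{1/2}(\overline D)$, with the joint Peirce decomposition of $V$ relative to the minimal tripotents $e_1,\ldots,e_q$ underlying the horocentre $c = e_1+\cdots+e_q$. By (\ref{e=xi}) and (\ref{>0}) the horoballs and their defining horofunction are unchanged if $\xi$ is replaced by $c$, so I may assume throughout that $\sigma_j>0$ for every $j=1,\ldots,q$. Applying formula (\ref{eq:Bergman Sq Rt}) to $B_s^{1/2}$ with $\lambda_j=\sqrt{\sigma_j/(\sigma_j+s)}$ and the convention $\sigma_0=0$, the operator $B_s^{1/2}$ acts on the joint Peirce space $V_{ij}$ by the scalar $\sqrt{s/(\sigma_i+s)}\sqrt{s/(\sigma_j+s)}$, so it is the identity on $V_{00}=V_0(c)$ while its scaling factor on every other $V_{ij}$ tends to $0$ as $s\to 0$. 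Meanwhile $c_s$ has only diagonal Peirce components, with $P_{jj}(c_s)=(\sigma_j/(\sigma_j+s))e_j$ on the one-dimensional spaces $V_{jj}=\mathbb{C}e_j$ ($1\leq j\leq q$), and zero elsewhere.

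For the inclusion $\overline\Gamma_c\subseteq\bigcap_{s>0}\overline H(\xi,s)$, take $x=c+v$ with $v\in V_0(c)\cap \overline D$ and set $w=c+v$. Orthogonality of $c$ and $v$ together with (\ref{osum}) gives $\|w\|=\max(\|c\|,\|v\|)=1$, hence $w\in \overline D$. The Peirce analysis of $B_s^{1/2}$ gives $B_s^{1/2}(e_j)=(s/(\sigma_j+s))e_j$, and Lemma \ref{baab1} (applied with $a=\sum_j\sqrt{\sigma_j/(\sigma_j+s)}\,e_j\in D$, which is triple-orthogonal to $v$) yields $B_s^{1/2}(v)=v$. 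Summing these contributions, $c_s+B_s^{1/2}(w)=c+v=x$, so $x\in\overline H(\xi,s)$ for every $s>0$.

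For the reverse inclusion, fix $x\in\bigcap_{s>0}\overline H(\xi,s)$ and, for each $s>0$, choose $w_s\in \overline D$ with $x=c_s+B_s^{1/2}(w_s)$. Applying each joint Peirce projection $P_{ij}$ to this identity and using the contractivity $\|P_{ij}(w_s)\|\leq 1$, I read off three facts valid for every $s>0$. First, every off-diagonal component $P_{ij}(x)$ (with $0\leq i<j\leq q$) equals a prefactor depending on $s$ times a bounded vector, the prefactor tending to $0$ as $s\to 0$; since $P_{ij}(x)$ is independent of $s$, it must vanish. Second, on $V_{jj}=\mathbb{C}e_j$ ($1\leq j\leq q$), writing $P_{jj}(x)=\mu_je_j$ and $P_{jj}(w_s)=t_se_j$ with $|t_s|\leq 1$ yields $\mu_j=(\sigma_j+st_s)/(\sigma_j+s)$, whence $t_s=\sigma_j(\mu_j-1)/s+\mu_j$; the constraint $|t_s|\leq 1$ as $s\to 0$, combined with $\sigma_j>0$, forces $\mu_j=1$. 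Third, since $B_s^{1/2}$ is the identity on $V_{00}$, we have $P_{00}(x)=P_{00}(w_s)=:v\in V_0(c)\cap \overline D$. Therefore $x=c+v$, and the identification $\overline\Gamma_c=c+(V_0(c)\cap \overline D)$ stemming from (\ref{e0}) completes the argument.

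The main technical burden is the careful catalogue of how the scalars $\sqrt{s/(\sigma_i+s)}\sqrt{s/(\sigma_j+s)}$ and the diagonal coefficients $\sigma_j/(\sigma_j+s)$ behave as $s\to 0$; once those scaling limits are in hand, both inclusions reduce to straightforward componentwise bookkeeping on the joint Peirce decomposition, with no further analytic input needed beyond (\ref{osum}) and Lemma \ref{baab1}.
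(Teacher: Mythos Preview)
Your argument is correct, and it takes a genuinely different route from the paper's proof. The paper establishes the inclusion $\bigcap_{s>0}\overline H(\xi,s)\subset\overline\Gamma_c$ by writing $x=c_{1/n}+B_{1/n}^{1/2}(x_n)$, invoking weak compactness of $\overline D$ to extract a weakly convergent subsequence of $(x_n)$, and then passing to the limit using the norm convergence $B_{1/k}^{1/2}\to B(c,c)^{1/2}=P_0(c)$; for the reverse inclusion it builds approximants $y_n\in\overline H(\xi,1/n)$ converging to $y\in\overline\Gamma_c$ and finishes by a short contradiction. You instead exploit the explicit diagonalisation of $B_s^{1/2}$ on the joint Peirce decomposition directly: projecting the identity $x=c_s+B_s^{1/2}(w_s)$ onto each $V_{ij}$ lets you read off $P_{ij}(x)$ componentwise and force the off-diagonal pieces to vanish and the diagonal coefficients to equal~$1$ by elementary scalar estimates as $s\to 0$, with no appeal to weak compactness. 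Your forward inclusion is also cleaner: the single choice $w=c+v\in\overline D$ realises $x$ in every $\overline H(\xi,s)$ simultaneously, whereas the paper uses a sequence of approximants and a closure argument. The trade-off is that your proof leans on the minimality of the $e_j$ (to get $V_{jj}=\mathbb{C}e_j$) and on the fine Peirce bookkeeping, while the paper's argument is more coordinate-free and would adapt with fewer changes if the explicit scalar action of $B_s^{1/2}$ were less transparent.
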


\begin{proof} Let $\xi= \alpha_1 e_1 + \cdots + \alpha_p e_p$ ,  with $\alpha_p >0$
and $p\leq r$, as in Theorem \ref{horo}. By  (\ref{e=xi}) and (\ref{clos}), we have
$$\overline H(\xi,s)=\sum_{j=1}^q \frac{\sigma_j}{\sigma_j +s} e_j +
B\left (\sum_{j=1}^q \sqrt{\frac{\sigma_j}{\sigma_j+s}} e_j, \,\sum_{j=1}^q \sqrt{\frac{\sigma_j}{\sigma_j+s}} e_j
\right)^{1/2}(\overline D)$$
where $1=\sigma_1 \geq \cdots \geq \sigma_q > 0$.
Let
 \begin{equation}\label{sume}
 c= e_1 + \cdots +e_q = \lim_{s\rightarrow 0}\sum_{j=1}^q \frac{\sigma_j}{\sigma_j+s} e_j
= \lim_{s\rightarrow 0}\sum_{j=1}^q \sqrt{\frac{\sigma_j}{s+\sigma_j}}\, e_j
\end{equation}
which is the horocentre of $H(\xi,s)$.

We show $$\bigcap_{s>0}\overline H (\xi,s) = \overline \Gamma_c.$$

Let $x\in \bigcap_{s>0}\overline H (\xi,s)$. Then for $n=1,2,\ldots,$
we have $x \in \overline H(\xi,1/n)$ and
$$ x= \sum_{j=1}^q \frac{\sigma_j}{\sigma_j + 1/n} e_j +
B\left (\sum_{j=1}^q \sqrt{\frac{\sigma_j}{\sigma_j + 1/n}} e_j, \,\sum_{j=1}^q \sqrt{\frac{\sigma_j}{\sigma_j + 1/n}} e_j
\right)^{1/2}(x_n)$$
for some $x_n \in \overline D$. By relative weak compactness of $D$, there is a subsequence $(x_k)$ of $(x_n)$
weakly converging to some point $w\in \overline D$. It follows that, letting $k\rightarrow \infty$,
$$x = c\, + \, weak\mbox{-}\lim_{k\rightarrow \infty}\,
B\hspace{-.03in}\left(\sum_{j=1}^q \sqrt{\frac{\sigma_j}{\sigma_j + 1/k}} e_j, \,\sum_{j=1}^q \sqrt{\frac{\sigma_j}{\sigma_j + 1/k}} e_j
\right)^{1/2}(x_k)$$
by (\ref{sume}), where the sequence of Bergman operators is norm convergent,  with limit
 $$\lim_{k\rightarrow \infty}\,
B\hspace{-.03in}\left(\sum_{j=1}^q \sqrt{\frac{\sigma_j}{\sigma_j + 1/k}} e_j, \,\sum_{j=1}^q \sqrt{\frac{\sigma_j}{\sigma_j + 1/k}} e_j
\right)^{1/2} =B(c,c)^{1/2}=P_0(c).$$
 Hence $x= c+ P_0(c)w \in c+ (V_0(c)\cap \overline D) = \overline \Gamma_c$.

 Conversely, let $y\in \overline \Gamma_c$. Then there is a point $z\in \overline D$
 such that
$$y= c+ P_0(c)(z) = c+ B(c,c)^{1/2}(z).$$
For $n=1, 2, \ldots$, let
$$y_n =   \sum_{j=1}^q \frac{\sigma_j}{\sigma_j + 1/n} e_j  + B\hspace{-.03in}\left(\sum_{j=1}^q \sqrt{\frac{\sigma_j}{\sigma_j + 1/n}} e_j, \,\sum_{j=1}^q \sqrt{\frac{\sigma_j}{\sigma_j + 1/n}} e_j
\right)^{1/2}(z) \in \overline H (\xi,1/n). $$
Then we have $ \lim_n y_n =y$.

If $y \notin \bigcap_{s>0}\overline H (\xi,s)$, then $y \notin\overline H (\xi,s_0)$ for some $s_0 > 0$.
Hence there exists $N \in \mathbb{N}$ such that $y_n \notin\overline H (\xi,s_0)$ for all $n >N$.
Pick $n'>N$ such that $1/n' < s_0$. Then $y_{n'} \notin\overline H (\xi,s_0)$ whereas
$y_{n'} \in\overline H (\xi, 1/n') \subset \overline H(\xi,s_0)$, which is impossible. Therefore $y \in
\bigcap_{s>0}\overline H (\xi, s)$ and the proof is complete.
\end{proof}

\begin{remark}\label{cr2}
Given a sequence $(s_n)$ in $(0,1)$ decreasing to $0$, we see readily, following the preceding arguments, that
$\bigcap_n \overline H(\xi,s_n) = \overline \Gamma_c$.
More generally, given $(s_n)$ decreases to some $s>0$, one can show analogously that
$\bigcap_n \overline H(\xi,s_n) = \overline H(\xi, s)$.  Also, given a sequence $w_n \in \overline H(\xi, s_n)$
converging to $w \in \overline D$, we have $w\in \bigcap_n \overline H(\xi,s_n)$.
\end{remark}

In the case of Hilbert balls, we have a simpler result.

\begin{theorem}\label{1ptintersection} Let  $D$ be a  Hilbert ball. Then each horoball $H(\xi,s)$ in Theorem \ref{cr1}
  satisfies
 $$\overline{H}(\xi, s) \cap \partial D
=\{\xi\}.$$
\end{theorem}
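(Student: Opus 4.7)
The plan is to exploit the explicit Hilbert space formula for the horofunction. Since $D$ is a Hilbert ball and the defining sequence $(z_k)$ norm-converges to $\xi \in \partial D$, Example \ref{h} together with (\ref{hh}) gives
$$F_\xi(x) = \frac{|1 - \langle x, \xi\rangle|^2}{1-\|x\|^2} \qquad (x \in D).$$
I will also use that $\xi \in \overline{H}(\xi, s)$ for every $s > 0$, which has already been verified in the excerpt immediately after (\ref{clos}); alternatively, from Example \ref{xilambda} we have $F_\xi(t\xi) = (1-t)/(1+t) \to 0$ as $t \nearrow 1$, so $t\xi \in H(\xi,s)$ for $t$ close enough to $1$ and hence $\xi = \lim_{t\to 1} t\xi \in \overline{H}(\xi,s)$.

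It remains to prove the reverse inclusion $\overline{H}(\xi,s) \cap \partial D \subseteq \{\xi\}$. Let $\eta \in \overline{H}(\xi,s) \cap \partial D$ and choose a sequence $(x_n) \subset H(\xi,s)$ with $x_n \to \eta$ in norm. Since $\|\eta\| = 1$, we have $1 - \|x_n\|^2 \to 0$. From the horofunction inequality $F_\xi(x_n) < s$ together with the explicit formula above,
$$|1 - \langle x_n, \xi\rangle|^2 < s\,(1 - \|x_n\|^2) \longrightarrow 0,$$
and hence $\langle x_n, \xi\rangle \to 1$. Passing to the limit gives $\langle \eta, \xi\rangle = 1$.

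Since $\|\eta\| = \|\xi\| = 1$, the Cauchy--Schwarz inequality attains equality, forcing $\eta = \lambda \xi$ for some scalar $\lambda$ with $|\lambda|=1$; but then $\langle \eta, \xi\rangle = \lambda = 1$, so $\eta = \xi$. This finishes the proof. The argument is essentially frictionless: the only subtlety is that $F_\xi$ is a priori defined only on $D$, so one must pass to the boundary through a norm-convergent sequence rather than evaluating $F_\xi$ at $\eta$ directly; the Hilbert structure (via Cauchy--Schwarz with equality) then pins down $\eta$ uniquely.
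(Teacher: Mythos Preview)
Your proof is correct and follows essentially the same approach as the paper: both use the explicit Hilbert formula $F_\xi(x)=|1-\langle x,\xi\rangle|^2/(1-\|x\|^2)$, approximate a boundary point by a sequence in $H(\xi,s)$, and deduce $\langle \eta,\xi\rangle=1$, hence $\eta=\xi$. Your explicit invocation of the equality case of Cauchy--Schwarz is a slight elaboration of the paper's final step, but otherwise the arguments are identical.
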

\begin{proof}
 By a remark before Theorem \ref{cr1}, we have $\xi \in \overline{H}(\xi,s)$.

By Example \ref{h}, we have
$$F_\xi(x)= \frac{|1-\langle x, \xi\rangle|^2}{1-\|x\|^2} \qquad (x\in D)$$
where $\langle \cdot, \cdot\rangle$ denotes the underlying inner product.

Let $x\in \overline{H}(\xi,s)  \cap \partial  D$. Then $\|x\|=1$ and $x=\lim_n x_n$ for some sequence
$(x_n)$ in $H(\xi,s)$, which  satisfies $F_\xi(x_n)<s $. It follows that
$$|1-\langle x,\xi\rangle|^2=\lim_{n\rightarrow \infty} |1-\langle x_n,\xi\rangle|^2
= \lim_{n\rightarrow \infty}F_\xi(x_n) (1-\|x_n\|^2) \leq \lim_{n\rightarrow \infty}s(1-\|x_n\|^2) = 0$$
 yielding $x=\xi$, which completes the proof.
\end{proof}

\section{Denjoy-Wolff Theorem}

In this final section, we prove a Denjoy-Wolff theorem for a fixed-point free compact holomorphic map $f: D \longrightarrow D$
on a bounded symmetric domain $D$ of finite rank, including the case of all fixed-point free
holomorphic self-maps on finite dimensional bounded symmetric domains. As before, we identify $D$ with the open unit ball
of a JB*-triple.

In the language of dynamics,
which studies the limit set
$\omega(a)$ of each point $a\in D$,
consisting of limit points of  the orbit $(f^n(a))$:
 $$\omega(a)=\bigcap_{n\in \mathbb{N}} \overline{\bigcup_{k\geq n}\{f^k(a)\}},$$
the Denjoy-Wolff theorem
reveals a striking phenomenon that {\it all} limit sets $\bigcup_{a\in D} \omega(a)$
accumulate in one single face $\overline\Gamma$
of $\overline D$.

Indeed, given a limit point $\xi= \lim_k f^{n_k}(a)$ in $\omega(a)$, we can find a locally uniformly convergent
subsequence $(f^{m_k})$ of $(f^{n_k})$ by Lemma \ref{cp} below. Hence $\xi=\ell(a)$, where
$\ell$ is the subsequential limit  $\ell = \lim_k f^{m_k}$ and the Denjoy-Wolff theorem says $\ell(D) \subset \overline\Gamma$.

We begin with some lemmas.

\begin{lemma}\label{cp} Let $f: D \longrightarrow D$ be a compact holomorphic map. Then every subsequence
of the iterates $(f^n)$ contains a locally uniformly convergent subsequence.
\end{lemma}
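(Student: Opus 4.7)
The plan is to invoke a Montel-type normality theorem for Banach-space valued holomorphic maps. Compactness of $f$ means $K := \overline{f(D)}$ is a compact subset of $V$, and since $f^n(D) = f(f^{n-1}(D)) \subset f(D) \subset K$ for every $n \geq 1$, the whole iterate family $\mathcal{F} = \{f^n : n \geq 1\}$ takes values in the single fixed compact set $K$. In particular $\mathcal{F}$ is uniformly bounded on $D$.

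Using the Cauchy integral formula for $V$-valued holomorphic maps, I would next derive uniform bounds on the derivatives $\|Df^n(x)\|$ on every closed ball $\overline{B}(a, r) \subset D$ with $r$ strictly less than $\operatorname{dist}(a, \partial D)$: the bound depends only on $\operatorname{diam} K$ and the distance from the ball to $\partial D$, so it is independent of $n$. Hence $\mathcal{F}$ is uniformly Lipschitz, and \emph{a fortiori} equicontinuous, on every ball strictly contained in $D$.

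An Arzel\`a--Ascoli style argument, combined with the compactness of the fixed target $K$, then yields that from any subsequence of $(f^n)$ one can extract a further subsequence converging uniformly on each closed ball strictly contained in $D$. A routine diagonal extraction over a countable exhaustion of $D$ by such balls packages this into a single subsequence converging uniformly on every finite union of closed balls strictly contained in $D$, which is precisely locally uniform convergence in the sense of \cite[1.1]{book2}.

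The step I expect to be the main obstacle is promoting uniform convergence on compact subsets to uniform convergence on a closed ball $\overline{B}(a, r) \subset D$, since in infinite-dimensional $V$ such a ball is not compact. This is handled via the Taylor expansion of $f^n$ at $a$: the Cauchy estimates give geometrically decaying bounds on the $k$-homogeneous polynomial parts $\tfrac{1}{k!}D^k f^n(a)$ uniformly in $n$, while each such coefficient, represented as a contour integral of elements of the compact set $K$, lies in the compact convex hull of a compact subset of $V$. A diagonal extraction on the Taylor coefficients followed by uniform geometric summability on $\overline{B}(a, r)$ then converts pointwise convergence of the coefficients into uniform convergence of the chosen subsequence on the whole ball.
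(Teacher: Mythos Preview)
The paper does not prove this lemma; it simply cites \cite[Lemma~1, Remark~1]{cm}. Your outline is a direct attempt, but it has a real gap: the only properties of $\{f^n\}$ you invoke are that every map takes values in the fixed compact set $K=\overline{f(D)}$ and that the family is equicontinuous on balls strictly inside $D$, and in infinite dimensions these alone do \emph{not} force normality. On the Hilbert ball of $\ell_2$ the maps $g_n(x)=\langle x,e_n\rangle\,e_1$ (with $(e_n)$ orthonormal) are holomorphic, uniformly $1$-Lipschitz, and all take values in the compact segment $\{te_1:|t|\le 1\}$, yet $\sup_{\|x\|\le 1/2}\|g_n(x)-g_m(x)\|=1/\sqrt2$ for $n\neq m$, so no subsequence converges uniformly on any ball. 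Your step~5 does not rescue this: the contour integral only shows that the \emph{values} $P_k^{(n)}(v)$ lie in a compact subset of $V$ for each fixed $v$, not that the polynomials $P_k^{(n)}$ form a relatively compact set in polynomial norm --- in the example just given, $P_1^{(n)}=\langle\cdot,e_n\rangle\,e_1$ satisfies $\|P_1^{(n)}-P_1^{(m)}\|=\sqrt2$ for $n\ne m$, so no ``diagonal extraction on the Taylor coefficients'' is available.

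What is missing is the factorisation $f^n=f^{n-1}\circ f$. For a closed ball $B=\overline{B}(a,r)\subset D$, the image $f(B)$ is relatively compact \emph{in $D$}, not merely in $V$: since $f$ is nonexpansive for the Kobayashi distance $\kappa$, the set $f(B)$ lies in a $\kappa$-ball about $f(a)$ and hence in some $\overline{B}(0,\rho)$ with $\rho<1$; its closure $K_B\subset K\cap\overline{B}(0,\rho)$ is then a compact subset of $D$. On the compact metric space $K_B$ the restrictions $f^{m}|_{K_B}$ are equicontinuous in norm (Kobayashi and norm distances being bi-Lipschitz on $K_B$) with values in the compact set $K$, so the ordinary Arzel\`a--Ascoli theorem gives a subsequence $f^{n_k-1}|_{K_B}\to g$ uniformly on $K_B$. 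Then $f^{n_k}(x)=f^{n_k-1}(f(x))\to g(f(x))$ uniformly for $x\in B$, since $f(x)\in K_B$. A diagonal extraction over the countable exhaustion $\overline{B}(0,1-1/m)$ produces a single subsequence that converges uniformly on every closed ball in $D$.
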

\begin{proof}
This has been proved in \cite[Lemma 1, Remark 1]{cm}.
\end{proof}

In contrast, we note  that a sequence of holomorphic self-maps on an infinite dimensional domain, even on a
separable  Hilbert ball,  need not
have any locally uniformly convergent subsequence \cite[Example 3.1]{kim}.

\begin{definition} A locally uniform subsequential limit  $\ell = \lim_k f^{n_k}$ of the iterates $(f^n)$
is called a {\it limit function} of  $(f^n)$.
\end{definition}

If a compact holomorphic map $f: D \longrightarrow D$ on a bounded symmetric domain $D$ has no fixed-point,
then the image $\ell(D)$ of each limit function $\ell = \lim_k f^{n_k}$
must lie entirely in the boundary $\partial D$. This is a consequence of
the following result proved in \cite{kk2}.

\begin{lemma}\label{kry}
Let $D$ be the open unit ball of a complex Banach space and $f: D\longrightarrow D$
a compact holomorphic map. Then $f$ is fixed-point free if and only if
$\sup_k \|f^{n_k}(a)\|=1$ for each subsequence $(f^{n_k})$ of the iterates $(f^n)$ and  each $a\in D$.
\end{lemma}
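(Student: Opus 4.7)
The reverse implication is immediate: if $f(p)=p$ for some $p\in D$, then $\|f^{n_k}(p)\|=\|p\|<1$ for every subsequence $(n_k)$, contradicting the supposition that $\sup_k\|f^{n_k}(p)\|=1$.

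For the substantive direction I would argue by contradiction. Suppose $f$ is fixed-point free yet there exist $a\in D$ and a subsequence $(f^{n_k})$ with $r:=\sup_k\|f^{n_k}(a)\|<1$; the goal is to manufacture a fixed point of $f$ in $D$. By compactness of $f$, $\overline{f(D)}$ is norm-compact in $V$, and each $f^{n_k}(a)$ lies in $\overline{f(D)}\cap\{z\in V:\|z\|\leq r\}$, which is compact in $V$. Hence $(f^{n_k}(a))$ has a norm cluster point $b$ with $\|b\|\leq r<1$, so $b\in D$. By Lemma \ref{cp}, pass to a sub-subsequence $(f^{m_k})$ converging locally uniformly to a holomorphic $\ell:D\to\overline D$ with $\ell(a)=b$.

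Set $p_k:=m_{k+1}-m_k$. Applying Lemma \ref{cp} once more, extract a further subsequence so that $f^{p_k}\to g$ locally uniformly for some holomorphic $g:D\to\overline D$; if the increments $(p_k)$ remained bounded, a pigeonhole argument would yield a periodic point of $f$ in $D$ (and hence, via a standard commutativity argument, a fixed point), so we may assume $p_k\to\infty$. The pivotal identity is $g(b)=b$, obtained from contractivity of the Kobayashi distance under holomorphic maps:
\[
\kappa\bigl(f^{p_k}(b),\,f^{p_k}(f^{m_k}(a))\bigr)\leq\kappa\bigl(b,\,f^{m_k}(a)\bigr)\longrightarrow 0,
\]
while $f^{p_k}(f^{m_k}(a))=f^{m_{k+1}}(a)\to b$. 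Since $b\in D$ and $f^{p_k}\to g$ uniformly on a neighbourhood of $b$, this forces $f^{p_k}(b)\to g(b)$, giving $g(b)=b$.

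The main obstacle is then converting an interior fixed point of $g$ into one of $f$. I would exploit the intertwining $f\circ g = g\circ f$ on $D$ (both sides being locally uniform limits of $f^{p_k+1}$), which inductively forces the forward orbit $\{f^n(b):n\geq 0\}$ to lie in $\mathrm{Fix}(g)\cap\overline{f(D)}$. Combining compactness of this orbit-closure with the Kobayashi non-expansiveness of $g$ and a Cartan--Carath\'eodory-type spectral analysis of $Dg(b)$ (which arises as a limit of products of Fr\'echet derivatives of $f$ along the orbit of $b$) should produce a point $z_0\in D$ with $f(z_0)=z_0$, contradicting fixed-point-freeness. It is precisely in this last step that the compactness of $f$ is essential, and it is the principal technical hurdle of the argument.
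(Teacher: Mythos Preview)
The paper does not supply its own proof of this lemma---it is quoted from \cite{kk2}---so the only question is whether your proposal stands on its own. Your reverse implication is fine, and the forward direction is correct up through $g(b)=b$: extracting $b\in D$ by compactness, passing to $f^{m_k}\to\ell$ via Lemma~\ref{cp}, setting $p_k=m_{k+1}-m_k$, extracting $f^{p_k}\to g$, and using the Kobayashi contraction together with $f^{p_k}(f^{m_k}(a))=f^{m_{k+1}}(a)\to b$ to force $g(b)=b$ all work as written. (The aside that bounded $p_k$ yields a periodic point of $f$ is correct, but the claimed ``standard commutativity argument'' upgrading a periodic point to a fixed point in an arbitrary Banach ball is itself not obvious and would need justification.)

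The genuine gap is exactly where you place it: you have a fixed point of $g$, not of $f$, and you do not prove the conversion. The sentence that a ``Cartan--Carath\'eodory-type spectral analysis of $Dg(b)$ \ldots\ should produce a point $z_0\in D$ with $f(z_0)=z_0$'' is a hope, not an argument; it is unclear how such a linearisation would even be formulated for the open unit ball of an arbitrary complex Banach space, and knowing that $Dg(b)$ is a limit of products of derivatives of $f$ along an orbit does not by itself locate a fixed point of $f$. The route in the literature instead strengthens $g(b)=b$ to $g\circ\ell=\ell$ on all of $D$ (immediate from $f^{p_k}\circ f^{m_k}=f^{m_{k+1}}$ once one knows $\ell(D)\subset D$), uses $f\circ\ell=\ell\circ f$ to see that $f$ maps $\ell(D)$ into itself, arranges by a further extraction that $\ell^2=\ell$, and then applies a fixed-point theorem to $f$ restricted to the retract $\ell(D)$. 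Without this or an equivalent step supplied, the proof is incomplete.
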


Given a fixed-point free compact holomorphic self-map $f$ on a bounded symmetric domain $D$, we will make use of
 the existence of a family of $f$-invariant horoballs $H(\xi,s)$ in $D$, constructed in \cite[Theorem 4.4]{horo}
 and stated below. This fact is a generalisation of  a boundary version of the Schwarz lemma for $\mathbb{D}$, proved by
 Wolff \cite{w1}.

\begin{lemma} \label{g} Let $f$ be a fixed-point free compact holomorphic self-map on a bounded symmetric domain
$D$ of any rank in $\mathbb{N}\cup \{\infty\}$.
Then there is a sequence $(z_k)$ in $D$ converging
to a boundary point $\zeta \in \partial D$ such that the horoballs
$H(\zeta, s)$ defined by $(z_k)$ are $f$-invariant, that is, $f(H(\zeta,s)) \subset H(\zeta,s)$,
 for all $s>0$.

Further, if $D$ is a Hilbert ball, the compactness assumption on $f$ can be removed.
\end{lemma}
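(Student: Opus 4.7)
The plan is to mimic Wolff's classical construction in the JB*-triple setting, obtaining the sequence $(z_k)$ as a limit of fixed points of the scaled maps $f_r := rf$ for $r\uparrow 1$. For each $r\in(0,1)$, since $f_r(D)\subset rD$ is at positive Kobayashi distance from $\partial D$ and, by compactness of $f$, $\overline{f_r(D)}=r\overline{f(D)}$ is compact in $D$, the Schauder--Tychonoff fixed-point theorem (or Earle--Hamilton) furnishes $z_r\in D$ with $z_r=rf(z_r)$. Take $r_n\uparrow 1$ and set $z_n:=z_{r_n}$. I would first verify $\|z_n\|\to 1$: otherwise some subsequence stays in $cD$ with $c<1$; compactness of $f$ then extracts $f(z_{n_k})\to w\in D$, so $z_{n_k}=r_{n_k}f(z_{n_k})\to w$ and continuity of $f$ gives $f(w)=w$, contradicting fixed-point freeness. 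Compactness of $\overline{f(D)}$ next lets me pass to a further subsequence along which $f(z_n)$, and hence $z_n$, norm-converges to some $\zeta\in\partial D$. In the finite-rank setting I would extract a sub-subsequence meeting condition (\ref{sigi}) via Remark \ref{sub}, so that the horofunction $F_\zeta$ of Theorem \ref{horo} is well-defined (in arbitrary rank one works instead with the $\limsup$-defined horofunction in (\ref{ball0})).

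For the invariance $F_\zeta\circ f\leq F_\zeta$ I would exploit that $f_{r_n}$ is a holomorphic self-map of $D$ fixing $z_n$: conjugation by the transvection $g_{z_n}$ produces a holomorphic self-map of $D$ fixing $0$, and the standard Schwarz lemma combined with (\ref{alphapho}) yields
\begin{equation*}
\kappa\bigl(r_n f(x),\,z_n\bigr)\;\leq\;\kappa(x,\,z_n)\qquad(x\in D).
\end{equation*}
Subtracting $\kappa(0,z_n)$ from both sides and invoking (\ref{gh}) recasts this as $h_{z_n}(r_n f(x))\leq h_{z_n}(x)$, where $h_{z_n}$ is the Gromov horofunction converging to $\frac{1}{2}\log F_\zeta$ by (\ref{zm}). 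Hence the right-hand side tends to $\frac{1}{2}\log F_\zeta(x)$. For the left-hand side the $1$-Lipschitz bound
\begin{equation*}
\bigl|h_{z_n}(r_n f(x))-h_{z_n}(f(x))\bigr|\;\leq\;\kappa\bigl(r_n f(x),\,f(x)\bigr)
\end{equation*}
reduces matters to the Kobayashi distance between $r_n f(x)$ and $f(x)$. Restricted to the complex line through $0$ and $f(x)$, this distance is dominated by the one-dimensional Poincar\'e distance $\kappa_{\mathbb{D}}(r_n\|f(x)\|,\|f(x)\|)$, which vanishes as $r_n\to 1$ for each fixed $x$. Passing to the limit gives $F_\zeta(f(x))\leq F_\zeta(x)$ and therefore $f(H(\zeta,s))\subset H(\zeta,s)$ for every $s>0$.

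The hard part is precisely this passage to the limit: the transvections $g_{-z_n}$ blow up as $z_n\to\partial D$, so the Schwarz-lemma inequality $\|g_{-z_n}(r_n f(x))\|\leq\|g_{-z_n}(x)\|$ cannot be pushed through coordinatewise. The horofunction formalism circumvents this by comparing Kobayashi distances to the moving base points $z_n$ and renormalising by $\kappa(0,z_n)$, thereby extracting the surviving leading-order information. For the Hilbert-ball case without the compactness of $f$, Earle--Hamilton still produces the fixed points $z_r$ since $f_r(D)\subset rD$ is $\kappa$-interior in $D$; exclusion of an interior accumulation point of $(z_n)$ uses weak-sequential continuity of bounded holomorphic maps on the ball, and the resulting (possibly only weakly convergent) sequence furnishes the horofunction in the form of Example \ref{h}, after which the same Schwarz-lemma chain yields the required $f$-invariance.
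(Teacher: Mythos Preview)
Your argument for the compact case is correct and follows the paper's strategy: construct fixed points $z_k$ of $\beta_k f$ via Earle--Hamilton, extract a norm-convergent subsequence using compactness of $\overline{f(D)}$, and derive $f$-invariance from the Schwarz--Pick inequality $\kappa(\beta_k f(x),z_k)\leq\kappa(x,z_k)$. Your passage to the limit through the Gromov-horofunction formalism and the $1$-Lipschitz bound is a clean way to handle the moving point $\beta_k f(x)\to f(x)$; the paper is terser here, writing only $\|g_{-z_k}(\beta_k f(x))\|\leq\|g_{-z_k}(x)\|$ together with $f=\lim_k\beta_k f$ and deferring details to \cite{horo}.

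For the Hilbert-ball case your sketch has a genuine gap. You appeal to ``weak-sequential continuity of bounded holomorphic maps on the ball'' to rule out an interior weak accumulation point of $(z_n)$, but holomorphic self-maps of an infinite-dimensional Hilbert ball are \emph{not} weakly sequentially continuous in general, so this step fails as stated. The paper argues differently: assuming $z_k\rightharpoonup\xi\in D$ weakly, it applies Schwarz--Pick in the form $\|g_{-f(\xi)}(f(z_k))\|\leq\|g_{-\xi}(z_k)\|$, substitutes $f(z_k)=\beta_k^{-1}z_k$, and uses the explicit formula (\ref{hh}) to obtain, after passing to the weak limit in the inner products, that $\|g_{-f(\xi)}(\xi)\|=0$ and hence $f(\xi)=\xi$, a contradiction. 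This forces $\|\xi\|=1$, and then weak convergence together with $\|z_k\|\to 1=\|\xi\|$ upgrades to norm convergence in the Hilbert space. Without this step your ``possibly only weakly convergent'' sequence could have weak limit in $D$, and the resulting Gromov horofunction from Example \ref{h} would not be attached to a boundary point, so the associated sublevel sets would not be horoballs in the required sense.
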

\begin{proof} For completeness,
we show the construction of $(z_k)$ and refer to  \cite[Theorem 4.4]{horo}
for other details.

Choose an increasing sequence $(\beta_k)$ in $(0,1)$ with
limit $1$. Then $\beta_k f$ maps $D$ strictly inside itself and
 the fixed-point theorem of Earle and Hamilton \cite{eh} implies the existence of some $z_k \in D$
such that $\beta_kf(z_k) = z_k$. Note that $z_k \neq
0$. Since $\overline{f(D)}$ is compact, we may assume, by choosing
a subsequence if necessary, that $(z_k)$ converges to a point $\zeta
\in \overline D$. Since $f$ has no fixed point in $D$, the point
$\zeta$ must lie in the boundary $\partial D$. Choosing a subsequence, we may assume that
$(z_k)$ satisfies (\ref{sigi}).

The $f$-invariance of  the horoballs $H(\zeta,s)$ defined by $(z_k)$
is a consequence of $f= \lim_k \beta_k f$ and the Schwarz-Pick lemma for $D$ \cite[Lemma 3.5.18]{book2}:
$$ \|g_{-z_k}(\beta_k f(x))\| =\|g_{-\beta_k f (z_k)}(\beta_k f (x)) \leq \|g_{-z_k}(x)\|
\qquad (x\in H(\zeta, s)).$$

For a Hilbert ball $D$, the compactness assumption of $\overline{f(D)}$ can be dropped.
Indeed, by weak compactness of $\overline D$ and choosing a subsequence in this case, we may assume
that $(z_k)$ converges {\it weakly} to some $\xi \in \overline D$. We show $\|\xi\|=1$.
Otherwise $\xi \in D$
and the Schwarz-Pick lemma implies $ \|g_{-f( \xi)}(f( z_k))\| \leq  \|g_{- \xi}( z_k)\|$.

Using (\ref{hh}) and substituting $ f( z_k) =\beta_k^{-1} z_k$, one deduces
$$\frac{|1-\langle \beta_k^{-1} z_k, f( \xi)\rangle|^2}{1-\|f( \xi)\|^2}\leq
\frac{1-\beta_k^{-2}\| z_k\|^2}{1-\| z_k\|^2} \frac{|1-\langle  z_k,
 \xi\rangle|^2}{1-\| \xi\|^2}.$$
Since $\frac{1-\beta_k^{-2}\| z_k\|^2}{1-\| z_k\|^2} \leq 1$, letting $k \rightarrow \infty$ gives
$$\frac{1}{1-\|g_{-f( \xi)}( \xi)\|^2} \leq 1$$
and hence $\|g_{-f( \xi)}( \xi)\|=0$. This implies
 $f( \xi) =  \xi$, contradicting the  non-existence of a fixed-point in $D$. It follows that
  $( z_{k})$ actually {\it norm} converges to $ \xi$
   since $\limsup_{k\rightarrow \infty} \| z_{k}\| \leq 1
  =\| \xi\| \leq \liminf_{k\rightarrow \infty}\| z_{k}\|$.
The horoballs $H(\xi,s)$ defined by $(z_k)$ are $f$-invariant.
\end{proof}

 Finally we are ready to prove the Denjoy-Wolff theorem.

 \begin{theorem}\label{dw}
Let $D$ be a bounded symmetric domain of finite rank, realised as the open unit ball of a complex Banach space,  and  $f: D
  \longrightarrow D$  a fixed-point free compact holomorphic map such that
  the extended Shilov boundary $\Sigma^*(D)$ contains the limit points of one orbit $\{a, f(a), f^2(a), \ldots \}$
  in an $f$-invariant horoball of unit hororadius.
Then there is a holomorphic boundary component $\Gamma$ in the boundary $\partial D$ such that
$\ell(D) \subset \overline \Gamma$ for all limit functions
$\displaystyle \ell= \lim_{m\rightarrow \infty} f^{m}$ of the iterates
$(f^n)$.

If $D$ is the bidisc or a Hilbert ball, the condition on $\Sigma^*(D)$ is superfluous.
\end{theorem}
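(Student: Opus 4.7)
The plan is to locate a canonical holomorphic boundary component $\overline{\Gamma_c}$ using an $f$-invariant horofunction, and then to show every limit function has image in $\overline{\Gamma_c}$.

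First, I would invoke Lemma \ref{g} to obtain a sequence $(z_k)$ norm-converging to a boundary point $\zeta \in \partial D$ whose associated horofunction $F = F_\zeta$ satisfies $F \circ f \leq F$, so that every horoball $H(\zeta,s)$ is $f$-invariant. Spectrally decomposing $\zeta = \alpha_1 e_1 + \cdots + \alpha_p e_p$ as in Theorem \ref{horo} and setting $c = \sum_{j:\sigma_j>0} e_j$, the horocentre of the family $H(\zeta,s)$, Theorem \ref{cr1} identifies $\bigcap_{s>0}\overline{H(\zeta,s)} = \overline{\Gamma_c}$, where $\Gamma_c = c + (V_0(c) \cap D)$. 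This $\Gamma_c$ is the candidate component.

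Next, for the orbit $\mathcal{O}(a) \subset H(\zeta,1)$ provided by the hypothesis, the invariance $F\circ f \leq F$ makes $(F(f^n(a)))_n$ non-increasing with limit $\mu \in [0,1]$. Lemma \ref{cp} lets me extract from any subsequence of $(f^n)$ a locally uniformly convergent further subsequence $f^{n_k} \to \ell$; passing to a norm-convergent sub-subsequence of $(f^{n_k}(a))$ and using the hypothesis, I may assume $f^{n_k}(a) \to \xi = \ell(a)$ with $\xi \in \Sigma^*(D)$. Since $F(f^{n_k}(a)) \to \mu$, Remark \ref{cr2} places $\xi \in \overline{H(\zeta,s)}$ for every $s > \mu$, and in $\overline{H(\zeta,\mu)}$. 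If $\mu = 0$, Theorem \ref{cr1} yields $\xi \in \overline{\Gamma_c}$ immediately.

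The technical core handles $\mu > 0$: show that being a tripotent in $\Sigma^*(D) \cap \overline{H(\zeta,\mu)}$ forces $\xi \in \overline{\Gamma_c}$. Parametrising via (\ref{clos}), $\xi = c_s + B_s^{1/2}(w_s)$ with $\|B_s^{-1/2}(w_s)\| \leq 1$, and using the joint Peirce expansion (\ref{419}) of $\xi$ relative to $e_1, \ldots, e_q$, I would combine the tripotent identity $\{\xi,\xi,\xi\} = \xi$ with either the maximality condition $V_0(\xi) = \{0\}$ or the structural condition $V_1(\xi) = \{0\}$. The minimal-tripotent classification Lemma \ref{24} (minimal tripotents in the presence of $V_1(\xi) = 0$ are either proportional or orthogonal to each $e_j$) and the Peirce-$2$ identities of Lemma \ref{c(cee)c} should force the $e_j$-components (for $j \leq q$) of $\xi$ to coincide with those of $c$, leaving $\xi - c \in V_0(c) \cap \overline D$. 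I expect this Jordan-rigidity step to be the main obstacle, since it requires extracting cancellations between the centre $c_s$ and the Bergman-shell contribution $B_s^{1/2}(w_s)$ and reconciling them with the spectral structure forced by the tripotent type of $\xi$.

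Once $\ell(a) \in \overline{\Gamma_c}$ is secured, Lemma \ref{3.2} applied to $\ell : D \to \overline D$ gives $\ell(D) \subset \overline{\Gamma_{\ell(a)}}$. Writing $\xi = c + w$ with $w \in V_0(c) \cap \overline D$, the expansion $\xi = \{\xi,\xi,\xi\} = c + \{w,w,w\}$ (via $c\,\square\,w = 0$) shows $w$ is a tripotent $c'$ orthogonal to $c$; since $V_0(c+c') \subset V_0(c)$, it follows that $\overline{\Gamma_{\ell(a)}} = \overline{\Gamma_{c+c'}} \subset \overline{\Gamma_c}$. Hence $\ell(D) \subset \overline{\Gamma_c}$ for every limit function, and the theorem holds with $\Gamma = \Gamma_c$. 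The unconditional cases are immediate: for a Hilbert ball, Theorem \ref{1ptintersection} gives $\overline{H(\zeta,s)} \cap \partial D = \{\zeta\}$, so $\xi = \zeta \in \Sigma^*(D) = \partial D$ and the hypothesis is vacuous; for the bidisc, abelianness forces $T(D) = T_1(D) \subset \Sigma^*(D)$, and the Hervé-style argument in the Appendix removes the hypothesis.
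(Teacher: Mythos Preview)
Your outline tracks the paper's architecture closely up to the ``technical core'', but that core is where the real difficulty lies, and your sketch does not close it.

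Two concrete problems. First, you plan to invoke Lemma~\ref{24} to compare the minimal tripotents $e_j$ (appearing in the horocentre $c$) against $\xi$. But Lemma~\ref{24} requires the hypothesis $V_1(e)=\{0\}$ on the minimal tripotent $e$; the minimal summands $c_j$ of $c$ have no reason to be structural unless $D$ is abelian (a polydisc). So the ``minimal-tripotent classification'' route collapses outside that special case, which is precisely why Remark~\ref{dw1} isolates the abelian argument separately. Second, you aim to land every $\xi=\ell(a_0)$ in $\overline{\Gamma_c}$ for the \emph{full} horocentre $c=c_1+\cdots+c_d$. The paper does not establish this and almost certainly cannot: from $\xi\in\overline{H(c,s_0)}$ and the expansion (\ref{j0}) one only gets $P_2(c_i)(\xi)\neq 0$ for those $i$ with $\sigma_i>s_0$, so the correct target component is $\overline{\Gamma_{c_0}}$ with $c_0=c_1+\cdots+c_{d_0}$, $d_0=\max\{i:\sigma_i>s_0\}$. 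When some $\sigma_i\leq s_0$ the coefficient of $c_i$ in $\xi$ can vanish, and then $\xi\notin\overline{\Gamma_c}$.

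The idea you are missing is the paper's diagonal manoeuvre in the structural case: instead of trying to squeeze information out of $\xi\in\overline{H(c,\mu)}$ directly, one evaluates the \emph{same} limit function $\ell$ at a sequence of points $a_n\in H(c,1/n)$. By $f$-invariance $\ell(a_n)\in\overline{H(c,1/n)}$, while Lemma~\ref{3.2} forces $\ell(a_n)\in\Gamma_\xi$; passing to a convergent subsequence (compactness of $\overline{f(D)}$) yields $w\in\overline{\Gamma_\xi}\cap\overline{\Gamma_c}$. Comparing the two resulting spectral decompositions of $w$ (equation (\ref{w11}) in the paper) is what pins down $c_0\bo c_0=c_0\bo\xi$, hence $c_0\in V_2(\xi)$ via the structural hypothesis $V_1(\xi)=\{0\}$, and then $\xi-c_0\in V_0(c_0)$. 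Your monotone-sequence reduction to a single level set $\overline{H(c,\mu)}$ discards exactly the freedom (varying the evaluation point of $\ell$ across shrinking horoballs) that makes this comparison possible.
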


\begin{proof}
We first remark that in the case of a Hilbert ball $D$, the manifold $\Sigma^*(D)$ coincides with the boundary $\partial D$
and contains all limit points of all orbits by Lemma \ref{kry}.

For the bidisc $\mathbb{D}^2$, Herv\'e has shown the result in \cite{h1} without the condition on
$\Sigma^*(D)$. We will present a proof simplifying Herv\'e's in the Appendix.

Let $D$ be of rank $r$, realised  as the open unit ball of a JB*-triple $V$.
By Lemma \ref{g}, $D$ contains an abundance of $f$-invariant horoballs.

Let $H(\zeta,1)$ be  the $f$-invariant horoball in the condition on $\Sigma^*(D)$.
It is defined by an $f$-invariant horofunction $F_\zeta$ and a sequence $(z_k)$ in $D$, with spectral decomposition
$$z_k= \beta_{k1}c_{k1} + \cdots +\beta_{kr}c_{kr} \quad (\|z_k\|=\beta_{k1} \geq \cdots \geq \beta_{kr} \geq 0)$$
which converges to $\zeta\in \partial D$.

The $f$-invariant horoballs $H(\zeta,s)=\{x\in D:F_\zeta(x)<s\}$, for $s>0$, are of the form
\begin{equation}\label{hball}
H(\zeta,s) = \sum_{j=1}^d \frac{\sigma_j}{\sigma_j+s} c_j +
B\left (\sum_{j=1}^d \sqrt{\frac{\sigma_j}{\sigma_j+s}} c_j, \,\sum_{j=1}^d \sqrt{\frac{\sigma_j}{\sigma_j +s}} c_j
\right)^{1/2}(D)
\end{equation}
with $1 =\sigma_1 \geq \sigma_2 \geq \cdots \geq \sigma_d >0$
and $\zeta$ admits a spectral decomposition
$$\zeta = c_1 + \cdots + c_d + \beta_{d+1} c_{d+1} + \cdots + \beta_r c_r \qquad (
 \beta_{d+1} \geq \cdots \geq \beta_r \geq 0)$$
 where  $1\leq d \leq r$ and
$$c= c_1 + \cdots +c_d$$
is the horocentre of $H(\zeta,s)=H(c,s) = \{x\in D: F_c(x) <s\}$ by (\ref{e=xi}).
The horofunction $F_c=F_\zeta$ is given by
 \begin{eqnarray}\label{Fc}
 F_c(x) 
 &=& \left\|\sum_{1\leq i\leq j\leq d}\rho_i\rho_jP_{ij}(c_1, \ldots, c_d)B(c,x)B(x,x)^{-1/2}\right\|
 \end{eqnarray}
(cf. Remark \ref{reverse}, (\ref{>0})) and
$\rho_j^2 = \sigma_j>0$ for $j=1, \ldots, d$. By (\ref{trunc}), $F_c$ can be computed by the sequence
$$z_k' =  \beta_{k1}c_{1} + \cdots +\beta_{kd}c_{d}.$$
By Theorem \ref{cr1}, the intersection
$$ \bigcap_{s>0} \overline H(c,s) =\overline\Gamma_c $$
 is the closure of the holomorphic boundary component $\Gamma_c = c+ (V_0(c)\cap D) \subset \partial D$.

The given condition on $\Sigma^*(D)$ provides an orbit $\{a_0, f(a_0), \ldots, \}\subset H(c,1)$ of which the limit points
are  maximal or structural  tripotents. In particular, $F_c(a_0) <1$ implies $F_c(a_0) < s_0<1$ for some $s_0>0$, that is, $a_0 \in H(c,s_0)$.  Let
$$d_0 = \max\{i\in \{1, \ldots, d\}: s_0< \sigma_i\}.$$
Since $0<s_0<1$, we have $d_0 \geq 1$. Also,  $t\in \overline{\mathbb{D}}$
with $s_0 < \sigma_i$
implies $\frac{\sigma_i+s_{_0}t}{\sigma_i+s_{_0}}\neq 0$ for all $i=1, \ldots, d_0$.

Let $c_0 = c_1 + \cdots + c_{d_0}$ and let
$$\ell=\lim_{m\rightarrow \infty} f^{m}$$ be a limit function,
 where  $(f^{m})$ is a  subsequence of the iterates $(f^n)$.
We show that
 $$\ell(D) \subset \overline\Gamma_{c_{_0}} =c_0 + (V_0(c_0) \cap \overline D)  $$
which would complete the proof.

Let $a\in D$ and let
\begin{equation}\label{ite}
f^m(a) = \alpha_{m1}e_{m1} + \cdots +\alpha_{mr}e_{mr}\quad (1>\|f^m(a)\|=\alpha_{m1} \geq \cdots \geq \alpha_{mr} \geq 0).
\end{equation}
be a spectral decomposition of $f^m(a)$. We may assume, by choosing subsequences as in Remark \ref{sub},
that for $i=1, \ldots, r$, each sequence $(\alpha_{mi})_m$ converges to some $\alpha_i \in [0, 1]$,
and $(e_{mi})_m$ weakly converges $e_i  \in \overline D$.

By Lemma \ref{zmm},
there exists $r_1\in \{1, \ldots, r\}$ such that
\begin{enumerate}
\item[(i)] $\alpha_i >0$ and $(e_{mi})_m$ norm converges to $e_i$ for $1\leq i\leq r_1$,
\item[(ii)] $\alpha_{i} =0$ for $i = r_1+1, \ldots, r$.
\item[(iii)] $e_1, \ldots, e_{r_1}$ are mutually orthogonal minimal tripotents.
\end{enumerate}
Hence we have the spectral decomposition
$$ \ell(a)= \lim_m f^m(a) = \alpha_1e_1 +\cdots +\alpha_{r_1} e_{r_1}$$
where $\alpha_1 = \lim_m \alpha_{m1} =\lim_{m} \|f^m(a)\| = \|\ell(a)\|=1$.
Let
$$p= \max \{j\in \{1, \ldots, r_1\}: \alpha_j=1\}.$$
Then $1\leq p\leq r_1$ and  we can write
\begin{equation*}\label{axi}
\ell(a) = e_1 + \cdots + e_p + \alpha_{p+1}e_{p+1} + \cdots + \alpha_{r_1}e_{r_1} \qquad (1> \alpha_{p+1} \geq \cdots
\geq \alpha_{r_1}>0)
\end{equation*}
where $\alpha_1 = \cdots = \alpha_p =1$ and $e_1 + \cdots + e_p$ is a tripotent.

Write $\xi = e_1 + \cdots +e_p$. Then $\ell (a) \in \xi+(V_0(\xi) \cap D) $ as $e_{p+1}, \ldots, e_{r_1} \in V_0(\xi)$.
 By Lemma \ref{3.2}, we have
\begin{equation}\label{3..2}
\ell(D) \subset \Gamma_{\ell(a)} = \Gamma_\xi= e_1 + \cdots + e_p + (V_0(\xi) \cap D).
\end{equation}

By assumption, the limit point $\ell(a_0)= \lim_m f^m(a_0)$ is a maximal or structural tripotent and
$\ell(a_0) \in \Gamma_\xi$ implies $\ell(a_0) = \xi$ by Remark \ref{uni2}.

If $\xi$ is a maximal tripotent, then we have $\ell(D) \subset \Gamma_\xi = \{\xi\}$.
 For any $s>0$, pick $a'\in H(c,s)$, then the $f$-invariance of $H(c,s)$ implies
$$\xi = \ell(a') = \lim_{m\rightarrow \infty} f^m(a') \in \overline H(c,s)$$
resulting in $$\xi \in \bigcap_{s>0} \overline H(c,s) = \overline\Gamma_c \subset \overline\Gamma_{c_{_0}}$$
where $c= c_1 + \cdots + c_{d_0} + \cdots + c_d =c_0 + (c-c_0) \in c_0 + (V_0(c_0)\cap \overline D)$.
This proves  $\ell(D)=\{\xi\} \subset  \overline\Gamma_{c_{_0}}$.

Now let $\xi$ be a structural tripotent in the remainder of the proof.
By $f$-invariance, $a_0\in H(c, s_0)$ implies
$\xi= \lim_m f^m(a_0) \in \overline H(c,s_0)$.
Hence we can write, by (\ref{419}),
\begin{eqnarray}\label{j0}
&& \indent\hspace{-.8in}e_1 + \cdots + e_{p} = \xi = c_{s_0} + B_{s_0}^{1/2}(z) \nonumber\\
 &&\indent\hspace{-1in}= \frac{1+s_0t_1}{1+s_0} c_1 + \cdots +  \frac{\sigma_d+s_0t_d}{\sigma_d +s_0}c_d + \sum_{0\leq i<j \leq d}
\sqrt{\frac{s_0}{\sigma_i +s_0}}\sqrt{\frac{s_0}{\sigma_j +s_0}}P_{ij}(z)
\end{eqnarray}
for some $z\in \overline D$, where $P_{ii}(z) = P_2(c_i)(z) = t_i c_i$  and $|t_i|\leq 1$ for $i= 1, \ldots, d$.

Since $\frac{\sigma_i+s_{_0}t_i}{\sigma_i+s_{_0}}\neq 0$ for $i=1, \ldots, d_0$,
 we have,  in view of (\ref{j0}), that
$$P_2(c_i)(\xi) = \frac{\sigma_i+s_0t_i}{\sigma_i+s_0} c_i \neq 0$$
for each $i = 1, \ldots, d_0$. In what follows, we show $ c_i + \cdots +c_{d_0} \in V_2(\xi)$.

Let $s_n=1/n$ for $n=1,2, \ldots$. Pick $a_n \in H(c, s_n)$.
The $f$-invariance of $H(c, s_n)$ implies $\ell(a_n) \in \overline H(c,s_n)$, with a spectral decomposition
$$\ell(a_n) = e_1 + \cdots + e_p + \alpha_{n,p+1}e_{n, p+1} + \cdots + \alpha_{n, r}e_{n, r} \in \Gamma_\xi
~
(1> \alpha_{n, p+1} \geq \cdots \geq \alpha_{n,r}\geq 0)$$
by (\ref{3..2}), where $e_{n,p+1}, \ldots, e_{n,r} \in V_0(\xi)\cap \overline D$.
Since $\ell(a_n) =\lim_m f^m(a_n) \in \overline{f(D)}$ for $n=1,2, \ldots$
and $\overline{f(D)}$ is compact, choosing a subsequence, we may assume that
the limit
\begin{equation}\label{w}
 w=\lim_{n\rightarrow \infty} \ell(a_n)\in \overline\Gamma_\xi = \xi +(V_0(\xi)\cap \overline D)
 \end{equation} exists.
On the other hand,  Remark \ref{cr2} implies
\begin{equation}\label{w'}
 w \in \bigcap_{n=1}^\infty \overline H(c, s_n) = \overline \Gamma_c= c + (V_0(c) \cap \overline D)
 \end{equation}
since $\displaystyle \lim_{n\rightarrow \infty} s_n =0$.

In  view of (\ref{w}) and (\ref{w'}),  $w$ has two spectral decompositions
\begin{eqnarray}\label{w11}
w &=& e_1 + \cdots + e_p + \alpha_{ p+1}e_{ p+1} + \cdots + \alpha_{r}e_{r}
\quad (1 \geq \alpha_{p+1} \geq \cdots \geq \alpha_r \geq 0)\nonumber\\
&=& c_1 + \cdots + c_d + \alpha'_{d+1} c_{d+1}' + \cdots + \alpha'_{r}c_{r}'
\quad (1 \geq \alpha'_{d+1} \geq \cdots \geq \alpha'_r \geq 0)
\end{eqnarray}
where $e_{p+1}, \ldots, e_r $ are pairwise orthogonal minimal tripotents in $V_0(\xi) \cap \overline D$ and
$c_{d+1}' \ldots, c_r'$ are the ones in $V_0(c) \cap \overline{D}$.

Given that $\xi = e_1 + \cdots+e_p$ is a structural tripotent, we have $c_i \in V_2(\xi) \oplus V_0(\xi)$.
Since $P_2(c_i)(\xi) \neq 0$ and $c_i$ is a minimal tripotent, we must have $c_i \in V_2(\xi)$ for
$i=1, \ldots, d_0$,
by a remark following (\ref{osum}), and hence $c_0=c_1 + \cdots +c_{d_0}\in V_2(\xi)$.
In particular, $$c_0 \bo (e_{p+1} + \cdots +e_{r}) =0.$$
We deduce from (\ref{w11}) that
 $$c_0 \bo c_0 =(c_1 + \cdots +c_{d_0}) \bo (c_1 + \cdots +\alpha'_{r} c_{r}') = c_0 \bo w = c_0 \bo \xi$$
 and
 \begin{equation}\label{xc0}
 \xi = c_0 + (\xi-c_0) \in c_0 + V_0(c_0)
 \end{equation}
 which implies $\max \{\|c_0\|, \|\xi-c_0\|\} = \|\xi\|=1$ by orthogonality.

Consequently, $x\in V_0(\xi)\cap D$ implies $x \in V_0(c_0)$ since $V_2(\xi) \bo V_0(\xi)= \{0\}$ and
$$\{c_0,c_0, x\} = \{c_0, \xi,x\}=0.$$ Therefore $(\xi - c_0) \bo x = \xi \bo x - c_0\bo x =0$
and
$$\|( \xi- c_0) + x\| = \max\{\| \xi-c_0\|, \|x\|\} \leq 1$$ by orthogonality,
which yields
\begin{eqnarray*}\label{cperp}
&&\ell(D) \subset  \Gamma_\xi = \xi + (V_0(\xi) \cap D)\\
& = & c_0  + (\xi-c_0)+ (V_0(\xi) \cap D)
 \subset  c_0+ (V_0(c) \cap \overline D)=\overline\Gamma_{c_{_0}}.
\end{eqnarray*}
This completes the proof.
\end{proof}

\begin{remark}\label{dw1} If $D$ is a polydisc in the preceding theorem,
Lemma \ref{sum} implies a sharper result than (\ref{xc0}), namely, $\{c_1, \ldots, c_{d_0}\} \subset \{e_1, \ldots, e_p\}$.
In fact, this can be delivered by the  more direct arguments below, with the previous notation. The following derivation will be
made use of in the Appendix.

Given any $a \in H(c,s)$ for some $s>0$, the $f$-invariance of $H(c,s)$ implies $f^m(a) \in H(c,s)$ for all $m$, that is,
\begin{eqnarray*}
s &>&F_c(f^m(a)) = \|\sum_{1\leq i\leq j\leq d}\rho_i\rho_jP_{ij}B(c,f^m(a))B(f^m(a),f^m(a))^{-1/2}\|\\
&\geq & \|\sum_{1\leq i\leq j\leq d}\rho_i\rho_jP_{ij}B(c,f^m(a))B(f^m(a),f^m(a))^{-1/2}(e_{mk})\|
\quad (k =1, \ldots, r)\\
&=& \|\sum_{1\leq i\leq j\leq d}\rho_i\rho_jP_{ij}B(c,f^m(a))(1-\alpha_{mk}^2)^{-1}(e_{mk})\|
\quad \mbox{(by (\ref{eq:Bergman Neg Sq Rt}))}\\
&\geq &
 \|P_{ii}(\sum_{1\leq i'\leq j'\leq d}\rho_{i'}\rho_{j'}P_{i'j'}B(c,f^m(a))(1-\alpha_{mk}^2)^{-1}(e_{mk}))\|
 \quad  (\|P_{ii}\|\leq 1)
\end{eqnarray*}
In particular,
\begin{equation}\label{ki'}
\|P_{ii}B(c,f^m(a))(e_{mk})\| < \rho_i ^{-2} (1-\alpha_{mk}^2) s \quad (i = 1, \ldots, d;\,k =1, \ldots, p)
\end{equation}
where $P_{ii}= P_2(c_i)$ and $\lim_m \alpha_{mk} =\alpha_k =1$. Let $m\rightarrow \infty$. Then we have
\begin{equation}\label{k=1r}
\|P_2(c_i)(e_k - 2\{c,  e_k, e_k) +\{c,  e_k, c\})\|=\|P_2(c_i)B(c, \ell (a))(e_k) \|
\leq \rho_i^{-2} (1-\alpha_k^2) s =0.
\end{equation}
 Now, for $i \in \{1, \ldots, d_0\}$, we have $P_2(c_i)\xi \neq 0$ which implies $P_2(c_i)(e_k)
=\lambda_k c_i \neq 0$ for some $k\in \{1, \ldots, p\}$. As $D$ is abelian, we have $e_k= \lambda_kc_i $ with  $|\lambda_k|=1$,
by Lemma \ref{24}.
 From (\ref{k=1r}), we deduce via Lemma \ref {c(cee)c} that
\begin{eqnarray*}
&&0 = \lambda_k c_i - 2 P_2(c_i)\{c, e_k,e_k\} + P_2(c_i)\{c, e_k,c\}\\
&=& \lambda_k c_i -2|\lambda_k|^2 c_i -2\{c_i, \{c_i, e_k^1, e_k^1\}, c_i\} +\overline\lambda_k c_i\\
&=& 2{\rm Re}\,\lambda_k c_i -2|\lambda_k|^2 c_i -2\{c_i, \{c_i, e_k^1, e_k^1\}, c_i\}
\end{eqnarray*}
where $e_k^1 = P_1(c_i)(e_k)=0$ since $D$ is abelian. It follows that

\begin{equation}\label{c_i}{\rm Re}\,\lambda_k  -|\lambda_k|^2=0
\end{equation}
and $\lambda_k =1$.  Hence $\ddot{}c_i =e_k$ for some $k\in \{1, \ldots, p\}$.
\end{remark}

\begin{xexam}
Let $D$ be a Lie ball and $f: D \longrightarrow D$ a compact fixed-point free holomorphic
map. Under the condition of Theorem \ref{dw}, either the iterates $(f^n)$ converge locally uniformly
to a constant function $f_0(\cdot) = \xi \in \partial D$, or there is a minimal tripotent $c\in \partial D$
such that each limit function  $\ell= \lim_m f^m$
is of the form
$$\ell(a) = c + \lambda_{_\ell} c^* \qquad ( a\in D)$$
for some $ |\lambda_{_\ell}|=1$.

To see this, let $\Gamma_c$ be the boundary component such that $\ell(D)\subset \overline \Gamma_c$
for every  limit function $\ell= \lim_m f^m$.

Given the orbit $\mathcal{O}(a_0)$ in Theorem \ref{dw},
$\ell(a_0) \in \Sigma^*(D) = \Sigma(D)$, by Example \ref{sigma*},
which is a maximal tripotent, and $\ell(D)= \{\ell(a_0)\}$.

 If $c$ is a maximal tripotent, then  $\overline\Gamma_c =\{c\} =\{\ell(a_0)\}$. This implies that all subsequences $(f^m)$
 of $(f^n)$ converge locally unifomrly to the same limit $f_0(\cdot)=c$, and hence, so does $(f^n)$, by Lemma \ref{cp}.

 If $c$ is a minimal tripotent, then $\overline\Gamma_c=c+ \overline{\mathbb{D}}e^*$
 and hence $\ell(a)=\ell (a_0)= c+ \lambda_{_\ell} c^*$ for some $|\lambda_{_\ell}|=1$ and  for all $a\in D$.
 \end{xexam}

\begin{xexam}
Let $D = D_1 \times \cdots
\times D_p$ ($p>1$) be a product of  Hilbert balls and $f: D \longrightarrow D$
a fixed-point free compact  holomorphic map. Under the condition of Theorem \ref{dw} and
by Example \ref{bergph1},  there is a non-empty set $ J \subset \{1, \ldots, p\}$
such that all limit functions $\displaystyle \ell = \lim_{k\rightarrow \infty} f^{n_k}$ of the iterates $(f^n)$
take  values
in the closed face
$$\overline\Gamma = \overline\Gamma_1 \times \cdots \times \overline\Gamma_p $$
of $\overline D$, where
\[\overline\Gamma_j = \left\{\begin{matrix} \{\xi_j\} & (j\in J, \|\xi_j\|=1)\\\\
                                                         \overline D_j & (j\notin J).
                                 \end{matrix}\right.
\]
\end{xexam}

In the exceptional case of the bidisc  $\mathbb{D}^2$,
the condition on $\Sigma^*(\mathbb{D}^2)$ in Theorem \ref{dw} can be
dispensed with, and
 the  face $\overline\Gamma$ has one of the following forms:
$$\{(e^{i\alpha}, e^{i\beta})\}, \quad \{e^{i\alpha}\}\times \overline{ \mathbb{D}}, \quad \overline{\mathbb{D}}\times \{e^{i\beta}\}
\qquad (\alpha, \beta \in \mathbb{R}).$$
If $\ell(\mathbb{D}^2) \subset \{e^{i\alpha}\}\times \overline{\mathbb{D}}$ for all
subsequential limits $\ell$, then
$$\ell(x,y) = (e^{i\alpha}, \pi_2 \ell(x,y))\quad {\rm and}\quad  \pi_2 \ell:\mathbb{D}\times \mathbb{D}
\longrightarrow \overline{\mathbb{D}} ~\mbox{is holomorphic}$$
whereas $\ell(\mathbb{D}^2) \subset \overline{\mathbb{D}}\times \{e^{i\beta}\}$ for all
$\ell$ implies
$$\ell(x,y) = (\pi_1 \ell(x,y), e^{i\beta})$$
where $\pi_i : (x_1,x_2) \in \overline{\mathbb{D}}^2\mapsto x_i\in \overline{\mathbb{D}}$ ($i=1,2$)
are the coordinate maps.
These are the alternatives derived by Herv\'e \cite[Th\'eor\`eme 4]{h1},
using protracted arguments specific for the bidisc  $\mathbb{D}^2$.
We expose  in the Appendix the special feature of $\mathbb{D}^2$ in the arguments and provide
a simplification of Herv\'e's proof, using horofunctions and invariant horoballs.

For a compact fixed-point free holomorphic self-map $f$ on a single Hilbert ball $D$,
the result of Chu and Mellon in \cite[Theorem]{cm}
 is the simplest case of  Theorem \ref{dw} for  rank-$1$  domains.
 In this instance, the iterates $(f^n)$ converge locally uniformly to a
constant map $f_0(\cdot) = \xi\in \partial D$ by Lemma \ref{cp}. Without the compactness assumption on $f$,
this result is false in infinite dimension as noted before. Instead, we have the following alternatives.

\begin{theorem}\label{single} Let $D$ be a Hilbert ball and $f: D \longrightarrow D$ a fixed-point free holomorphic map.
Then there is a boundary point $\zeta \in \partial D$ such that either $\ell(D) \subset D$ or $\ell(D) =\{\zeta\}$
for all limit functions $\ell=\lim_k f^{n_k}$. The former inclusion cannot occur if $\dim D <\infty$.
\end{theorem}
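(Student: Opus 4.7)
The plan is to combine three ingredients already in the paper: the existence of an $f$-invariant family of horoballs (Lemma \ref{g}, whose second clause specifically drops compactness for Hilbert balls), the Hilbert-ball-specific fact that each closed horoball meets the boundary at exactly one point (Theorem \ref{1ptintersection}), and the dichotomy provided by the holomorphic components of $\overline D$ (Lemma \ref{3.2} together with Example \ref{bergph1}). These together essentially force the stated dichotomy, with the candidate boundary point $\zeta$ being produced once and for all by Lemma \ref{g}, independently of the limit function.

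First I would apply Lemma \ref{g} to produce a point $\zeta \in \partial D$ and a sequence $(z_k) \subset D$ norm-converging to $\zeta$ such that the horoballs $H(\zeta,s)$, $s>0$, defined by $(z_k)$ are all $f$-invariant. Because $D = \bigcup_{s>0} H(\zeta,s)$, every $a \in D$ lies in some $H(\zeta,s_a)$, and $f$-invariance gives $f^m(a) \in H(\zeta,s_a)$ for every $m$; hence for any limit function $\ell = \lim_k f^{n_k}$ we have $\ell(a) \in \overline{H(\zeta,s_a)}$.

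Next, fix such an $\ell$. By Lemma \ref{3.2}, $\ell(D)$ is contained in a single holomorphic component of $\overline D$. For a Hilbert ball, Example \ref{bergph1} says the only components are $D$ itself and the singletons $\{\xi\}$ with $\xi \in \partial D$. Thus either $\ell(D) \subset D$, or $\ell$ is constant with value $\xi_\ell \in \partial D$. In the second case, pick any $a \in D$; then $\xi_\ell = \ell(a) \in \overline{H(\zeta,s_a)} \cap \partial D$, and Theorem \ref{1ptintersection} yields $\overline{H(\zeta,s_a)} \cap \partial D = \{\zeta\}$, so $\xi_\ell = \zeta$ and $\ell(D) = \{\zeta\}$. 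Because $\zeta$ was fixed prior to choosing $\ell$, the same $\zeta$ serves every limit function.

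For the last assertion, when $\dim D < \infty$ every continuous self-map is compact, so Lemma \ref{kry} gives $\sup_k \|f^{n_k}(a)\| = 1$ for each $a \in D$ and each subsequence. If $\ell = \lim_k f^{n_k}$ converges locally uniformly, then $\|f^{n_k}(a)\| \to \|\ell(a)\|$; coupled with $\|f^{n_k}(a)\| < 1$ and $\sup_k = 1$, this forces $\|\ell(a)\| = 1$, hence $\ell(D) \subset \partial D$ and the alternative $\ell(D) \subset D$ is excluded. The proof has no genuinely hard step; the main subtlety is ensuring the non-compact version of Lemma \ref{g} (which is where the Hilbert-ball structure is essential, via the explicit use of formula (\ref{hh}) in its proof) really delivers a boundary $\zeta$ in the non-compact setting, but this is exactly the second clause of that lemma.
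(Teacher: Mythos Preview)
Your proof is correct and follows essentially the same approach as the paper's own proof: invoke Lemma \ref{g} (non-compact version for Hilbert balls) to obtain $\zeta$ and the $f$-invariant horoballs, use Lemma \ref{3.2} together with Example \ref{bergph1} to reduce to the dichotomy $\ell(D)\subset D$ or $\ell$ constant in $\partial D$, and then pin down the constant as $\zeta$ via Theorem \ref{1ptintersection}. Your handling of the finite-dimensional case is slightly more explicit than the paper's (which simply cites Lemma \ref{kry}), but the content is the same.
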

\begin{proof} By Lemma \ref{g}, there is a sequence $(z_k)$ in $D$ converging to
some $\zeta \in \partial D$, which defines the $f$-invariant  horoballs
$H(\zeta,s)$ for $s>0$.

Let $\ell = \lim_k f^{n_k}$ be a limit function. Pick  $a\in D$. Then $\ell(a) \in \overline D$
and hence either $\ell(a) \in D$ or $\ell (a) \in \partial D$. In the former case,
we have $\ell(D) \subset D$ by Lemma \ref{3.2}. In the latter case, we have $a\in H(\zeta,s)$ for some $s>0$,
which implies, by $f$-invariance,
$$\ell(a) \in \overline H(\zeta,s) \cap \partial D = \{\zeta\}$$
 by Theorem \ref{1ptintersection}.
Therefore Lemma \ref{3.2} implies $\ell(D) \subset \Gamma_{\ell(a)} =\{\ell(a)\}= \{\zeta\}$ as $\ell(a)$
is a maximal tripotent.

If $\dim D<\infty$, the map $f$ is compact and hence $\ell(D) \subset \partial D$, by Lemma \ref{kry}.
\end{proof}

\begin{xrem} The preceding proof simplifies considerably the one given by Chu and Mellon in \cite[Theorem]{cm}
for {\it compact} holomorphic maps.
\end{xrem}

\section{Appendix}

In the exceptional  case of the bidisc $D=\mathbb{D}^2$, Theorem \ref{dw},
 without the condition on $\Sigma^*(\mathbb{D}^2)$,
has been proved by Herv\'e
\cite{h1} with protracted arguments specific for the bidisc  $\mathbb{D}^2$, via
the Cayley transform of $\mathbb{D}$ onto the right-half complex plane and Harnack's theorem
for harmonic functions.
We simplify Herv\'e's proof (still dependent on $\mathbb{D}^2$) below, using horofunctions and invariant horoballs.

Let
$$\pi_1: (x,y) \in \overline{\mathbb{D}}^2 \mapsto x\in \overline{\mathbb{D}}, \quad \pi_2: (x,y) \in
\overline{\mathbb{D}}^2 \mapsto y\in \overline{\mathbb{D}} $$
be the coordinate maps.

Given a fixed-point free
holomorphic self-map $f$ on $\mathbb{D}^2$, Herv\'e first observed in \cite[$7^0$]{h1} that, by a simple reduction,
one need only consider the following three mutually exclusive cases (in our notation):
\begin{enumerate}
\item[(a)] There is a boundary point $\zeta=(e^{i\alpha},0)$ such that for each
$(x,y)\in \mathbb{D}^2$, the image $f(x,y)$ is contained in a horoball
$$ H(\zeta, s_x)=\mathbb{H}(e^{i\alpha},s_x)\times \mathbb{D}\qquad (s_x >0)$$
 with $x\in \partial \mathbb{H}(e^{i\alpha},s_x)$,
 and there is holomorphic function $\eta: \mathbb{D}\longrightarrow \mathbb{D}$
such that $\pi_2f(x,y)=y$ if and only if $y=\eta(x)$.
\item[(b)] There is a boundary point $\zeta=(e^{i\alpha},e^{i\beta})$ such that for each
$(x,y)\in \mathbb{D}^2$, the image $ f(x,y)$ is contained in 
$$\mathbb{H}(e^{i\alpha},s_x) \times \mathbb{H}(e^{i\beta},s_y) \qquad (s_x, s_y >0)$$
with $(x,y) \in \partial \mathbb{H}(e^{i\alpha},s_x)\times \partial \mathbb{H}(e^{i\beta},s_y)$.
\item[(c)] There are two holomorphic functions $\nu, \eta: \mathbb{D}\longrightarrow \mathbb{D}$ such that
$$\pi_1f(x,y) = x \Leftrightarrow x= \nu(y), \quad {\rm and} \quad  \pi_2f(x,y) = y \Leftrightarrow y= \eta(x).$$
\end{enumerate}

In case (b),   one can follow the arguments in \cite[$12^0$]{h1} to establish that,
given  $(x,y) \in \mathbb{D}^2$, either
$$\lim_{n\rightarrow \infty} \pi_1 f^n(x,y) =e^{i\alpha}\quad {\rm or}
\quad  \lim_{n\rightarrow \infty} \pi_2 f^n(x,y) = e^{i\beta}.$$
 It follows that, in the former case,
for each limit function $\ell= \lim_m f^{m}$ and $a\in \mathbb{D}^2$,
we have $$\ell(a) = e_1 + \alpha e_2 \quad {\rm with} \quad 0\leq \alpha \leq 1, \quad e_1 = (e^{i\alpha},0), e_2=(0, e^{i\theta})$$
 and $\ell(\mathbb{D}^2) \subset \overline \Gamma_{(e^{i\alpha},0)}$.

 Likewise, in the latter case, we have
$\ell(\mathbb{D}^2) \subset \overline \Gamma_{(0, e^{i \beta})}$
 for all limit functions $\ell$.

We deal with cases (a) and (c) in the following proof.

\begin{xthm}\label{dw2}{\it
Let $f: \mathbb{D}^2 \longrightarrow \mathbb{D}^2$ be a fixed-point free holomorphic map.
Then there is a holomorphic boundary component $\Gamma$ in the boundary $\partial \mathbb{D}^2$ such that
$\ell(\mathbb{D}^2) \subset \overline \Gamma$ for all limit functions $\ell= \lim_k f^{n_k}$ of the iterates
$(f^n)$.}
\end{xthm}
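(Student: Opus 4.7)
The strategy will follow Hervé's reduction (recalled in the excerpt) of the fixed-point-free holomorphic self-map $f$ on $\mathbb{D}^2$ to the three mutually exclusive cases (a), (b), (c). Case (b) is already settled in the text. The plan is to dispose of cases (a) and (c) uniformly using horofunctions together with the product form of horoballs on $\mathbb{D}^2$ (Example \ref{2disc}) and Lemmas \ref{g}, \ref{cp}, \ref{kry}.

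For case (a), I would set $t_n := F_{e^{i\alpha}}(\pi_1 f^n(x,y))$. The hypothesis translates into the strict inequality $F_{e^{i\alpha}}(\pi_1 f(x',y')) < F_{e^{i\alpha}}(x')$ for every $(x',y') \in \mathbb{D}^2$, so $(t_n)$ is strictly decreasing with some limit $L \geq 0$. My claim is that $L = 0$: if not, then $(\pi_1 f^n(x,y))$ is confined to the set $\{z \in \mathbb{D} : L \leq F_{e^{i\alpha}}(z) \leq t_0\}$, which is relatively compact in $\mathbb{D}$ by Proposition \ref{hbrs}. Extracting a limit function $\ell = \lim_k f^{n_k}$ via Lemma \ref{cp} and invoking Lemma \ref{kry}, one has $\|\ell(x,y)\| = 1$, and since $|\pi_1 \ell(x,y)| < 1$ this forces $|\pi_2 \ell(x,y)| = 1$. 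Using $f^{n_k+1}(x,y) = f^{n_k}(f(x,y))$ and locally uniform convergence at the interior point $f(x,y) \in \mathbb{D}^2$, one obtains $t_{n_k+1} \to F_{e^{i\alpha}}(\pi_1 \ell(f(x,y))) = L$, whereas iterating the strict hypothesis along the orbit $(f^m(x,y))$ produces the contradiction. Hence $t_n \to 0$, so $\pi_1 f^n(x,y) \to e^{i\alpha}$ for every $(x,y)$, and every limit function $\ell$ of $(f^n)$ satisfies $\pi_1 \ell \equiv e^{i\alpha}$, whence $\ell(\mathbb{D}^2) \subset \{e^{i\alpha}\} \times \overline{\mathbb{D}} = \overline\Gamma_{(e^{i\alpha},0)}$.

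For case (c), the key initial observation is that $\eta \circ \nu : \mathbb{D} \to \mathbb{D}$ is fixed-point free, since a fixed point $y_0$ would give the fixed point $(\nu(y_0), y_0)$ of $f$. By the classical Denjoy--Wolff theorem, $(\eta \circ \nu)^n \to \tau \in \partial \mathbb{D}$ locally uniformly, and by Wolff's boundary Schwarz lemma every horodisc $\mathbb{H}(\tau, s)$ is $(\eta \circ \nu)$-invariant. I would then argue that the $f$-invariant horoball furnished by Lemma \ref{g}, being incompatible with the structures of (a) or (b), must have horocenter $(0, \tau)$ (or symmetrically $(\tau, 0)$) with the product form $\mathbb{D} \times \mathbb{H}(\tau, s)$, and that $f$ contracts it strictly in the $\tau$-coordinate. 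Reusing the argument of case (a) with coordinates swapped then forces $\pi_2 f^n(x,y) \to \tau$ for every $(x,y)$, giving $\ell(\mathbb{D}^2) \subset \overline{\mathbb{D}} \times \{\tau\} = \overline\Gamma_{(0, \tau)}$.

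The hard part will be the reduction step in case (c): linking the one-dimensional Wolff point $\tau$ of $\eta \circ \nu$ to the product form and the strict contraction of the $f$-invariant horoball on $\mathbb{D}^2$. This is where the special feature of $\mathbb{D}^2$ as a rank-$2$ abelian product enters decisively, since the simple boundary stratification of the bidisc (maximal tripotents plus two families of minimal tripotents) forces the horocenter of the invariant horoball into one of the restricted forms compatible with the Wolff point $\tau$.
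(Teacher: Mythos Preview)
Your approach departs genuinely from the paper's: you try to avoid Herv\'e's dichotomy lemma (the inequalities (\ref{dicot}) quoted from \cite[p.\,11]{h1}), replacing it by strict monotonicity of the one-dimensional horofunction along the orbit. The paper, by contrast, uses that dichotomy as a black box to force the alternative (\ref{extreme0})/(\ref{extreme}), and then does an explicit computation of $F_c(f^m(a))$ on the bidisc to show it tends to $0$ along the subsequence in case (\ref{extreme}).

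There are, however, genuine gaps in your proposal.

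\textbf{Case (a).} The set $\{z\in\mathbb{D}: L\le F_{e^{i\alpha}}(z)\le t_0\}$ is \emph{not} relatively compact in $\mathbb{D}$: both horodiscs $\mathbb{H}(e^{i\alpha},L)$ and $\mathbb{H}(e^{i\alpha},t_0)$ are internally tangent to $\partial\mathbb{D}$ at $e^{i\alpha}$, and one can approach $e^{i\alpha}$ tangentially while keeping $F_{e^{i\alpha}}(z)$ bounded between $L$ and $t_0$ (write $z=1-w$ and note $F_1(z)=|w|^2/(2\,\mathrm{Re}\,w-|w|^2)$). Proposition~\ref{hbrs} does not assert what you use it for. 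So you cannot conclude $|\pi_1\ell(x,y)|<1$. Moreover, even granting $|\pi_1\ell(x,y)|<1$, your contradiction is not established: from $t_{n_k}\to L$ and $t_{n_k+1}\to L$ you only get $F_{e^{i\alpha}}(\pi_1\ell(x,y))=F_{e^{i\alpha}}(\pi_1\ell(f(x,y)))=L$; the strict inequality $F_{e^{i\alpha}}(\pi_1 f(x',y'))<F_{e^{i\alpha}}(x')$ holds only for $(x',y')\in\mathbb{D}^2$ and does not survive passage to the boundary limit $\ell(x,y)\in\partial\mathbb{D}^2$. Strict monotonicity of $(t_n)$ by itself cannot force $L=0$; a quantitative ingredient such as Herv\'e's dichotomy is precisely what supplies this.

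\textbf{Case (c).} You acknowledge this is the hard part and only sketch an idea. The link you assert between the Wolff point $\tau$ of $\eta\circ\nu$ and the horocentre of the $f$-invariant horoball produced by Lemma~\ref{g} is not justified: that horoball is built from the approximate fixed points $z_k=\beta_kf(z_k)$, whose limit has no a priori relation to $\tau$. Nor is there any reason the resulting horoball should contract \emph{strictly} in one coordinate. The paper's proof does not need these facts; it treats cases (a) and (c) uniformly through Herv\'e's dichotomy, and only at the very end exploits the abelian structure (Lemma~\ref{24}) and an explicit horofunction computation (equation~(\ref{zero})) to identify the boundary component.
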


\begin{proof}
It suffices to consider cases (a) and (c) above. For this, one need only consider
 the $f$-invariant horoballs in $\mathbb{D}^2$
 of the form
\begin{equation}\label{H1}
H(c,s)  =\mathbb{H}(e^{i\alpha},s)\times \mathbb{D}
\end{equation}
or of the form
\begin{equation}\label{H2}
H(c,s) = \mathbb{H}(e^{i\alpha},s) \times \mathbb{H}(e^{i\beta},s/\sigma) \qquad (0<\sigma\leq 1)
\end{equation}
which are defined by a sequence $(z_k)$  converging to $c$ via Lemma \ref{g}, where we assume
$c=(e^{i\alpha},0)$ in (\ref{H1}) without loss of generality.

Indeed, if $\lim_k z_k = (0, e^{i\beta})$
so that $H(c,s) = \mathbb{D} \times \mathbb{H}(e^{i\beta},s)$,
 we can make use of the transformation
$T: (x,y) \in \mathbb{D}^2 \mapsto (y,x)\in \mathbb{D}^2$ and consider the transformed sequence $(Tz_k)$
which converges to $(e^{i\beta},0)$, with the holomorphic map $\widetilde f = TfT$ and the iterates $\widetilde f^n = Tf^nT$.

Let $0<s<1$ and $a\in H(c,s)$ in (\ref{H1}) or (\ref{H2}).
In cases (a) and (c), the Lemma in \cite[p.\,11]{h1} reveals a dichotomy in that there exist $\alpha_1,\alpha_2>0$, depending on $a$,
such that for each $n\in \mathbb{N}$,
the following inequalities cannot hold simultaneously:
\begin{equation}\label{dicot}
\frac{1-|\pi_2f^n(a)|}{1-|\pi_1 f^n(a)|}<\alpha_1 \quad {\rm and} \quad \frac{1-|\pi_2 f^{n+1}(a)|}{1-|\pi_2 f^n(a)|}< 1+ \alpha_2.
\end{equation}

Making use of this, we first  show  that either
\begin{equation}\label{extreme0}
\lim_n| \pi_1 f^n(a)| =1
\end{equation}
 or there is a convergent subsequence $(f^m)$ of the iterates $(f^n)$ such that
\begin{equation}\label{extreme}
\lim_m |\pi_1 f^m(a)|= \lim_m |\pi_2 f^m(a)|=1.
\end{equation}

Observe that $\|f^n(a)\|= \max (|\pi_1 f^n(a)|, |\pi_2 f^n(a)|)$ and $\lim_n \|f^n(a)\|=1$ by Lemma \ref{kry}.
If from some $n$ onwards, $|\pi_1 f^n(a)| > |\pi_2 f^n(a)|$, then we have
$$\lim_n |\pi_1 f^n(a)| = \lim_n \|f^n(a)\|=1$$
and we are done. Otherwise, there is a subsequence $(f^m)$ of $(f^n)$ such that $|\pi_1 f^m(a)| \leq |\pi_2 f^m(a)|$
and we may assume that $(f^m)$ is convergent by choosing a subsequence if necessary. Hence
$$\lim_m |\pi_2 f^m(a)| = \lim_m \|f^m(a)\|=1.$$
Let $h = \lim_m f^m$.
If we also have $\lim_m |\pi_1f^m(a)|=1$, then we are done. Otherwise,
\begin{equation}\label{extreme1}
\pi_1 h (a) = \lim_m\pi_1 f^m(a) \in \mathbb{D}
\end{equation}
and since $\|h(a)\|=1$, we would have a spectral decomposition
$$h (a) = u_1 + \gamma u_2 \in \Gamma_{u_1} \quad \mbox{where $u_1 = (0, e^{i\theta_1})$,
$\theta_1 \in \mathbb{R}$ and $0\leq \gamma <1$}.$$
It follows that $h(\mathbb{D}^2) \subset \Gamma_{u_1}$. In particular, $h(f(a)) \in  \Gamma_{u_1}$ implies
$\lim_m |\pi_1f^{m+1}(a)| = |\pi_1 h(f(a))| <1$. Hence $\|h(f(a))\|=1$ implies $\lim_m|\pi_2 f^{m+1}(a) | = |\pi_2(h(f(a))| =1$
 and from some $m$ onwards, we have $|\pi_1 f^{m+1}(a)| < |\pi_2f^{m+1}(a)|$ as well as
 $|\pi_1 f^{m}(a)| < |\pi_2f^{m}(a)|$.

Since $\lim_n \|f^n(a)\|=1$, the sequence $(\|f^n(a)\|)$ is increasing from some $n$ onwards. Therefore, from some
$m$ onwards, we have
$$|\pi_2 f^{m+1}(a)| =\|f^{m+1}(a)\| \geq \|f^{m}(a)\| =|\pi_2 f^m(a)|.$$

Now Herv\'e's dichotomy gives the contradiction that, from some $m$ onwards,
$$\frac{1-|\pi_2f^m(a)|}{1-|\pi_1 f^m(a)|}<\alpha_1\quad {\rm and} \quad
1\geq \frac{1-|\pi_2f^{m+1}(a)|}{1-|\pi_2 f^m(a)|}\geq 1+\alpha_2 >1$$
where $\lim_m \frac{1-|\pi_2f^m(a)|}{1-|\pi_1 f^m(a)|}=0$.
Hence (\ref{extreme1}) cannot occur and  the claim is proved.

Now let $\ell=\lim_k f^{n_k}$ be a limit function with a spectral decomposition
$$\ell(a) = \lim_k f^{n_k}(a) = e'_1 + \alpha_2 e'_2 \in \overline{\mathbb{D}}^2 \qquad (0\leq \alpha_2 \leq 1).$$
Then $f^{n_k}(a) \in H(c, s)$ implies
$$\left\|\sum_{1\leq i'\leq j'\leq d_c}\rho_{i'}\rho_{j'}P_{i'j'}B(c,f^{n_k}a))B(f^{n_k}(a),f^{n_k}(a))^{-1/2}\right\|
=F_c(f^{n_k}(a) ) <s.$$
where $d_c =1$ for $H(c,s)$ in ( \ref{H1}), and $d_c =2$ in  (\ref{H2}).

Write $c_1=(e^{i\alpha},0)$ and $c_2 = (0, e^{i\beta})$. In the special case of the bidisc $\mathbb{D}^2$, we must have
either $c_1 \bo e_1' \neq 0$ or $c_2 \bo e_1' \neq 0$, which implies either $c_1 = e^{i\theta_1} e_1'$ or
$c_2 = e^{i\theta_2}e_1'$ for some $\theta_1, \theta_2 \in \mathbb{R}$, by Lemma \ref{24}.

As in the the derivation of (\ref{ki'}) and (\ref{c_i}) in Remark \ref{dw1}, we have, for $c_j= e^{i\theta_j} e'_1$
(j=1, 2),
\begin{eqnarray}\label{final}
\| P_{2}(c_j) B(c,\,\ell(a))(e'_1)\| &=& \lim_{k\rightarrow \infty} \,\| P_{2}(c_j) B(c,\,f^{n_k}(a))(e'_{n_k,1})\|\nonumber\\
&\leq& \lim_{k\rightarrow \infty} \rho_j^{-2} (1-\alpha_{n_k, 1}^2)s =0
\end{eqnarray}
where $f^{n_k}(a) =\alpha_{n_k,1} e'_{n_k,1} + \alpha_{n_k, 2} e'_{n_k,2} \rightarrow e'_1 + \alpha_2 e'_2$ as $k \rightarrow \infty$.
This gives
 $$({\rm Re}\, e^{i\theta_j} -|e^{i\theta_j}|^2)c_j =\{c_j, \{c_j, (e'_1)^1, (e'_1)^1\}, c_j\}=0$$
where $(e'_1)^1=P_1(c_j)(e'_1)=0$ because $\mathbb{D}^2$ is abelian. Hence $e^{i\theta_j}=1$ and $c_j= e'_1$.

It follows that, for the case  $\lim_n |\pi_1 f^n(a)|=1$ in (\ref{extreme0}), we have $c_1 \bo e'_1 \neq 0$ which
gives $c_1 =e'_1$ and
$$\ell(\mathbb{D}^2) \subset \overline \Gamma_{(e^{i\alpha},0)}$$
for each limit function $\ell= \lim_k f^{n_k}$.

Finally, consider the case (\ref{extreme}) in which $\lim_m |\pi_1 f^m(a)|=\lim_m |\pi_2 f^m(a)|=1$. We can write
\begin{eqnarray*}
h(a) &=& (e^{i\theta_1}, e^{i\theta_2})= e_1 +  e_2 \quad{\rm with} \quad  e_1 =(e^{i\theta_1},0), ~ e_2=(0, e^{i\theta_2}) \\
&=& \lim_m f^m(a) = \lim_m (\alpha_{m1}e_{m1} +\alpha_{m2}e_{m2}).
\end{eqnarray*}
Since $e_j= \lim_m e_{mj}$ for $j=1,2$ and $\mathbb{D}^2$ is abelian, we have $e_{mj}= e^{i\theta_{mj}}e_j$
with $\theta_{mj}\in \mathbb{R}$ from some $m$ onwards, by Lemma \ref{24},
and $\lim_m \alpha_{mj}e^{\theta_{mj}}=1$.

By Corollary \ref{anotherseq}, the horofunction $F_c$ for the horoballs in (\ref{H1}) can be computed
via a sequence $(\beta_{k1}c_1)$,
where $c_1 = (e^{i\alpha},0)$ and $\lim_{k\rightarrow\infty} \beta_{k1} =1$, whereas the horofucntion $F_c$
for (\ref{H2}) can be computed by a sequence $(\beta_{k1}c_1 + \beta_{k2}c_2)$ with
$c_1= (e^{i\alpha},0)$, $c_2=(0, e^{i\beta})$ and $\lim_k \beta_{kj}=1$ for $j=1,2$.
We show both horofunctions satisfy
\begin{equation}\label{zero}
\lim_{m\rightarrow \infty} F_c(f^m(a) ) =0.
\end{equation}
To show this, we first observe that, by
making use of (\ref{final}) as before, we have $e_1 =c_1= (e^{i\alpha},0)$ in (\ref{H1})
whereas in case (\ref{H2}), we have $e_1 = c_1$ and $e_2 =c_2= (0, e^{i\beta})$.
Hence the horofunction in  case (\ref{H1}) is given by
\begin{eqnarray*}
&&F_c(f^m(a)) = \lim_{k\rightarrow \infty}\frac{1-\beta_{k1}^2}{1-\|g_{-\beta_{k1}c_1}(\alpha_{m_1}e^{i\theta_{m1}}e_1
+\alpha_{m2}e^{i\theta_{m2}}e_2)\|^2}\\
&=&\lim_{k\rightarrow \infty}\frac{1-\beta_{k1}^2}{1-\|\psi_{-\beta_{k1}}(\alpha_{m_1}e^{i\theta_{m1}})c_1
+\alpha_{m2}e^{i\theta_{m2}}c_2\|^2} \quad ({\rm Lemma}~ \ref{k=0})\\
&=& \lim_{k\rightarrow \infty}\frac{1-\beta_{k1}^2}{1-|\psi_{-\beta_{k1}}(\alpha_{m_1}e^{i\theta_{m1}})|^2}
=  \frac{|1-\alpha_{m_1}e^{i\theta_{m1}}|^2}{1-|\alpha_{m_1}e^{i\theta_{m1}}|^2} \quad (\rm{by}~\ref{hh})
\end{eqnarray*}
where $\lim_{k\rightarrow \infty} |\psi_{-\beta_{k1}}(\alpha_{m_1}e^{i\theta_{m1}})| =1 > |\alpha_{m2}e^{i\theta_{m2}}|$.

In case  (\ref{H2}), we have
\begin{eqnarray*}
F_c(f^m(a))& =& \lim_{k\rightarrow \infty}\frac{1-\beta_{k1}^2}{1-\|g_{-(\beta_{k1}c_1+\beta_{k2}c_2)}(\alpha_{m1}e^{i\theta_{m1}}e_1
+\alpha_{m2}e^{i\theta_{m2}}e_2)\|^2}\\
 &=&\lim_{k\rightarrow \infty}\frac{1-\beta_{k1}^2}{1-\|\psi_{-\beta_{k1}}(\alpha_{m1}e^{i\theta_{m1}})c_1
+\psi_{-\beta_{k2}}(\alpha_{m2}e^{i\theta_{m2}})c_2\|^2}\\
 &=&\lim_{k\rightarrow \infty}\frac{1-\beta_{k1}^2}{1-\max(|\psi_{-\beta_{k1}}(\alpha_{m1}e^{i\theta_{m1}})|^2,
|\psi_{-\beta_{k2}}(\alpha_{m2}e^{i\theta_{m2}})|^2)}\\
&= &\max(\frac{|1-\alpha_{m1}e^{i\theta_{m1}}|^2}{1-|\alpha_{m1}e^{i\theta_{m1}}|^2}, \frac{|1-\alpha_{m2}e^{i\theta_{m2}}|^2}{1-|\alpha_{m2}e^{i\theta_{m2}}|^2}).
\end{eqnarray*}
It follows that $\lim_{m\rightarrow \infty} F_c(f^m(a) ) =0$ in both cases since for $z=re^{i\theta}\in \mathbb{D}$,
$$\frac{|1-z|^2}{1-|z|^2}
= \frac{1-2r\cos \theta +r^2}{1-r^2} = \frac{1}{{\rm Re}\,\left(\frac{1+z}{1-z}\right)} \quad {\rm and} \quad
\lim_{z\rightarrow 1}\frac{|1-z|^2}{1-|z|^2}=0.
$$

We conclude by showing that, in each case of (\ref{H1}) and (\ref{H2}), we have
$$\ell(\mathbb{D}^2) \subset \overline\Gamma_c$$
for every limit function $\ell= \lim_k f^{n_k}$.

Indeed, for each $s>0$,  there exists $m_s\in \mathbb{N}$, from (\ref{zero}), such that $f^m(a) \in H(c,s)$ for $m \geq m_s$.
Hence for all $n_k > m_s$, the $f$-invariance of $H(c,s)$ implies
$$f^{n_k}(a) = f^{n_k-m_s}(f^{m_s}(a)) \in H(c,s)$$
 and therefore $\ell(a) = \lim_k f^{n_k}(a) \in \overline H(c,s)$.
As $s>0$ was arbitrary, we have shown $\ell(a) \in \bigcap_{s>0} \overline H(c,s) = \overline \Gamma_c$
and so $\ell(\mathbb{D}^2) \subset \overline\Gamma_c$.
\end{proof}

\bigskip

\end{document}